\documentclass {amsart}

\usepackage{amsthm,amsmath,amssymb,stmaryrd,verbatim}
\usepackage[cmtip,arrow,matrix]{xy}

\newcommand{\mycommented}[1]{ }

        \newtheorem{thm}{Theorem}[section]
        \newtheorem{prop}[thm]{Proposition}
        \newtheorem{lemma}[thm]{Lemma} 
        \newtheorem{cor}[thm]{Corollary}

\theoremstyle{definition}
        \newtheorem{defn}[thm]{Definition}
       
        \newtheorem{examples}[thm]{Example}
        \newtheorem{construct}[thm]{Construction}
        \newtheorem{notation}[thm]{Notation}

\theoremstyle{remark}
         \newtheorem{remark}[thm]{Remark}

\numberwithin{equation}{section}

\newcommand{\cpt}{\text {Cpt}}
\newcommand{\ptcpt}{\text {Cpt}_*}

\newcommand{\ourcat}{\ensuremath{\mathbb A}_\nu}

\newcommand{\C}{\ensuremath{\mathbb C}}
\newcommand{\R}{\ensuremath{\mathbb R}}
\newcommand{\N}{\ensuremath{\mathbb N}}
\newcommand{\compacts}{\ensuremath{\mathcal K}}

\newcommand{\cstar}{\ensuremath{C}\text{*}}
\newcommand{\starhom}{\text{*-homomorphism}}
\newcommand{\starhoms}[1][ ]{\text{*-homomorphisms}{#1}}
\newcommand{\staralg}{\ensuremath{\mathbb T}}
\newcommand{\staralgs}[1][s]{\text{*-algebra{#1}}}
\newcommand{\substaralg}{\text{sub-*-algebra}}

\newcommand{\calg}{\cstar\text{-}alg}
\newcommand{\calgs}{\ensuremath{C}\staralgs}
\newcommand{\lmccstar}{\ensuremath{l.m.c.\text{-}\cstar}}
\newcommand{\LMCCSTAR}{\ensuremath{\mathbb L}}

\DeclareMathOperator{\Kof}{K}
\newcommand{\K}{\ensuremath{\Kof}}
\newcommand{\KK}{\K \K}

\newcommand{\cof}[1][?]{\ensuremath{\mathcal C(#1)}}

\newcommand{\bof}[1][?]{\ensuremath{\mathcal B(#1)}}
\newcommand{\bofh}{\bof[\mathcal H]}

\newcommand{\complete}[1]{{\overline{(#1)}^\nu}}

\DeclareMathOperator{\Hom}{Hom}
\DeclareMathOperator{\colim}{colim}

\DeclareMathOperator{\ho}{Ho}

\newcommand{\homeo}{\cong}
\newcommand{\union}{\bigcup}

\newcommand{\Smash}{\wedge}

\newcommand{\tensor}{\otimes}
\newcommand{\ouriso}{\thickapprox}

\newcommand{\Left}{\mathbb L}
\newcommand{\Right}{\mathbb R}
\newcommand{\lefta}[1][A]{\Left_{#1}}
\newcommand{\leftaatensorK}[1][A \atensorK \compacts]{\Left_{#1}}
\newcommand{\righta}[1][A]{\Right_{#1}}
\newcommand{\rightaatensorK}[1][A \atensorK \compacts]{\Right_{#1}}

\newcommand{\SEP}[1][ ]{seminorm extension property{#1}}
\newcommand{\CSEP}[1][ ]{cone \SEP[#1]}
\newcommand{\sCSEP}[1][ ]{stable \CSEP[#1]}

\newcommand{\proa}{\ourcat}
\newcommand{\ptcof}[1]{\ensuremath{\mathcal C_*\left(#1\right)}}
\newcommand{\atimes}{\varotimes}             
\newcommand{\ptatimes}{\varowedge}             
\newcommand{\atensorK}{\boxtimes}           

\newcommand{\pttop}{\ensuremath{\text{Top}_*}}

\begin{document}

\title[$KK$-groups as homotopy sets of a model category]{Realizing
  Kasparovs $KK$-theory groups as the homotopy classes of maps of a Quillen Model
  Category} 


\author[M. Joachim]
    {Michael Joachim}
\address{Mathematisches Institut \\
         Westf\"alische Wilhelms-Universit\"at M\"unster \\
         Einsteinstrasse 62 \\
         48149 M\"unster \\
         Germany}
\email{joachim@math.uni-muenster.de}

\author [M. W. Johnson]
        {Mark W. Johnson}
\address{Department of Mathematics\\
         Penn State Altoona\\
         Altoona, PA 16601-3760\\
         USA}
\email{mwj3@psu.edu}
\thanks{The authors gratefully acknowledge the support of SFB 478}

\keywords{Pro-C*-algebra, KK-theory, Kasparov groups, Quillen model
  category, complex topological algebra, locally multiplicatively convex algebra}
\subjclass{Primary: 46L80; Secondary 46M20, 55U35}

\date{}

\begin{abstract}
In this article we build a model category structure on the category
of sequentially complete \lmccstar-algebras such that the
corresponding homotopy classes of maps $Ho(A,B)$ 
for separable \calgs\ $A$ and $B$ coincide with the Kasparov groups $KK(A,B)$.
\end{abstract}

\maketitle

\section{Introduction}

The goal of the present article is to relate classical Kasparov $KK$-theory
with the concepts of Quillen's model category theory. 
The motivation for this project emerged from
one of the fundamental theorems of $KK$-theory due to Cuntz.

\begin{thm}[Cuntz]
        \label{htpyKK}
        Suppose $A$,$B$ are \calgs\ with $A$ separable and $B$
        $\sigma$-unital. Then there is a natural isomorphism
\[KK(A,B) \cong [qA,B \tensor \compacts],
\]
where $qA$ denotes the kernel of the fold map $A \ast A \to A$,
$\compacts$ stands for the C$\staralgs[{}]$ of compact operators
on a separable infinite dimensional Hilbert space, and
where $[?,?]$ stands for homotopy classes of $\starhoms$.
\end{thm}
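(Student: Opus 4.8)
The plan is to reduce the statement to Cuntz's \emph{quasihomomorphism} description of $\KK$-theory and then feed it through the universal property of the functor $q$. Recall that a quasihomomorphism from $A$ to a \cstar-algebra $B$ is a pair $(\varphi,\bar\varphi)$ of \starhoms[] $A \to D$ into some \cstar-algebra $D$ carrying a fixed embedding of $B$ as a closed ideal, subject to $\varphi(a)-\bar\varphi(a)\in B$ for all $a\in A$. First I would recall (or reprove) the central result of Cuntz's ``new look'': for $A$ separable and $B$ $\sigma$-unital, $\KK(A,B)$ is naturally isomorphic to the set of homotopy classes of quasihomomorphisms from $A$ to $B\tensor\compacts$, where two such are homotopic when they arise as the endpoints of a quasihomomorphism into the path algebra $\cof[{[0,1]}]\tensor B\tensor\compacts$. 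The stabilization by $\compacts$ is exactly what permits an arbitrary Kasparov module to be rewritten as such a pair, by absorbing the $\mathbb Z/2$-grading and the Fredholm operator into two representations that differ by a compact perturbation.

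Next I would establish the universal property of $qA$. Writing $\imbed,\bar\imbed:A\to A\ast A$ for the two canonical inclusions into the free product, the fold map $A\ast A\to A$ restricts to the identity on each factor, so $\imbed(a)-\bar\imbed(a)$ lies in its kernel $qA$ for every $a$. Thus $(\imbed,\bar\imbed)$ is itself a quasihomomorphism from $A$ into $A\ast A$ supported in the ideal $qA$, and I claim it is the universal example. Indeed, given any quasihomomorphism $(\varphi,\bar\varphi):A\to D$ with $B\triangleleft D$, the coproduct property of the free product yields a unique \starhom\ $\Phi:A\ast A\to D$ with $\Phi\circ\imbed=\varphi$ and $\Phi\circ\bar\imbed=\bar\varphi$; since $qA$ is generated as an ideal by the elements $\imbed(a)-\bar\imbed(a)$, which $\Phi$ carries into $B$, the restriction $\Phi|_{qA}$ takes values in $B$ and so defines a \starhom\ $qA\to B$. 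Conversely a \starhom\ $qA\to B$ extends over multipliers and composes with $\imbed,\bar\imbed$ to reconstruct the pair; I would verify that the two assignments are mutually inverse and that both respect homotopy, using that forming path algebras commutes with $q$.

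Stringing the two identifications together gives $\KK(A,B)\cong\bigl[\,\text{quasihomomorphisms }A\to B\tensor\compacts\,\bigr]\cong[qA,B\tensor\compacts]$, which is the asserted isomorphism; its naturality in each variable is inherited from the naturality of the two steps. The main obstacle is the first step: proving that quasihomomorphisms compute $\KK$ is the genuinely analytic heart of the matter, demanding a careful translation between Kasparov's Fredholm-module picture and the algebraic quasihomomorphism picture, together with a check that Kasparov's sum and the intersection product are matched under the correspondence. By comparison the universal property of $qA$ is purely formal, resting only on the coproduct structure of free products and on ideals being generated by differences. A secondary technical point to monitor is the multiplier-algebra extension invoked in the converse direction, which is precisely where the $\sigma$-unitality of $B$ and the separability of $A$ enter, guaranteeing the nondegeneracy and strict continuity needed to pass from $qA$ back to $A\ast A$.
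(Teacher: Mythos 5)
This statement is quoted in the paper as a known theorem of Cuntz and is given no proof there at all --- it is the external input on which the whole construction rests --- so there is no internal argument to compare yours against. Your sketch is the standard route from the literature (Cuntz's quasihomomorphism picture of $\KK$ plus the universal property of $qA$, essentially as in Cuntz's ``A new look at $KK$-theory'' and Blackadar \S 19), and as an outline it is correct. Two caveats. First, you openly defer the entire analytic content: the identification of $\KK(A,B)$ with homotopy classes of quasihomomorphisms into $B \tensor \compacts$ \emph{is} the theorem for all practical purposes, so what you have written is a reduction to a citable result rather than a proof; that is a legitimate thing to do here, but you should be explicit that this is what the proposal amounts to. Second, the converse direction of the $qA$ correspondence is glossed: given $\varphi: qA \to B \tensor \compacts$ you cannot literally ``compose with $\imbed, \bar\imbed$'' since those land in $A \ast A$, not in the domain of $\varphi$; the standard repair is to let $A \ast A$ act by multipliers on the ideal $qA$ and extend $\varphi$ strictly to multiplier algebras (or to replace the target by the hereditary subalgebra generated by the image), and this is exactly where stabilization by $\compacts$ and $\sigma$-unitality of $B$ are consumed --- you name the issue but do not resolve it. Relatedly, the matching of the two homotopy relations does not need ``$q$ commutes with path algebras'' (which is false as stated, since $A \ast A$ does not commute with $\cof[{[0,1]},-]$); it only needs naturality of the correspondence in the target applied to $\cof[{[0,1]},B \tensor \compacts]$. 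With those repairs your argument is the accepted one.
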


The initial intuition was that replacing $A$ by $qA$ and $B$ by $B \tensor 
\compacts$
could be the cofibrant and fibrant replacement functors
for a corresponding model category structure on the category of \calgs\ which
somehow realizes the $KK$-groups as homotopy classes of maps.  
However, one of the first observations when trying to build a model category of
\calgs\ is that the category of \calgs\ is too small.
By the Gelfand-Naimark Theorem, commutative \calgs\ all correspond to
algebras of continuous functions on compact spaces.
However, looking for a model category structure on
the category of compact spaces or the category of pointed compact spaces
is not very reasonable since they do not contain arbitrary unions.
Instead one looks for
model category structures on the category of compactly generated
spaces, which can all be described as unions of their compact subspaces.
In the algebra setting this step would correspond to considering inverse limits
of \calgs\ instead of \calgs\ themselves. For technical reasons, we
consider a slightly bigger 
class of algebras, namely the so-called \lmccstar-algebras. By the
Arens-Michael 
decomposition theorem (\ref{Arens-Michael} below) any
\lmccstar-algebra can be embedded as a dense subalgebra 
into an inverse limit of \calgs, and an \lmccstar-algebra is complete
if and only 
if it is isomorphic to an inverse limit of \calgs. To make sure that the notion
of the tensor product in the category of \calgs\ agrees with the
tensor product in 
the category of algebras we are working with, we restrict to
$\nu$-sequentially complete \lmccstar-algebras for an infinite ordinal
$\nu$.

Our main theorem now is
\begin{thm}[$\equiv$ Theorem \ref{generalKK}]\label{Main_Theorem}
There is a model category structure on the category of $\nu$-sequentially
complete \lmccstar-algebras 
satisfying the following properties:
\begin{enumerate}
\item \label{realizesKK}
for a separable \calgs[ ]$A$ and a $\sigma$-unital \calgs[ ]$B$
there is a natural isomorphism
\[ Ho(A,B) \cong KK(A,B);\]
\item \label{weq_is_KKiso} a \starhom\ $A \to A'$ of separable \calgs\ is 
a weak equivalence for the model category structure if and only if it
is a $KK_*$-equivalence; 
\item \label{cofibrantly_generated}
the model category structure is cofibrantly generated;
\item \label{objects_are_fibrant}
each object is fibrant.
\end{enumerate}
\end{thm}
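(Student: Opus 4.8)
The plan is to construct the model category structure on the category of $\nu$-sequentially complete \lmccstar-algebras by transferring a known homotopy-theoretic structure along the functor that extracts $KK$-data, using Cuntz's Theorem~\ref{htpyKK} as the dictionary that translates $KK(A,B)$ into honest homotopy classes of \starhoms. Concretely, the homotopy relation $[qA, B\tensor\compacts]$ already lives inside the algebra category, so the first step is to identify a class of maps to be weak equivalences whose induced maps on these homotopy-class functors are bijections for all relevant test objects, thereby forcing property~\eqref{weq_is_KKiso}. Because we have enlarged the category to include inverse limits (so that arbitrary colimits and limits exist), I expect the structure to arise as a \emph{localization} of a more primitive, Quillen-style structure in which weak equivalences are homotopy equivalences of \starhoms\ and fibrations are controlled by a path-object or cylinder built from the interval algebra $\czero((0,1))$ or its unital analogue.

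First I would fix the generating sets for the cofibrantly generated structure required by~\eqref{cofibrantly_generated}. The natural candidates for generating cofibrations are mapping-cone or mapping-cylinder inclusions associated to the standard suspension/cone functors in the \calg\ world, together with the inclusions coming from the $qA$ construction; the generating trivial cofibrations should be the evident acyclic analogues. I would then verify Quillen's small object argument applies, which requires a smallness or compactness property of the domains of the generating maps relative to sequential colimits. This is precisely where the hypothesis of $\nu$-sequential completeness earns its keep: it guarantees that the relevant colimits stay inside the category and that the sources of the generating maps are small with respect to the transfinite compositions one builds. Having the generating sets in hand, the retract argument and the two-out-of-three axiom for weak equivalences follow from the formal properties of the homotopy-classes functor, and the lifting axioms reduce to checking the lifting/extension properties against the two generating sets.

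The payoff properties~\eqref{realizesKK} and~\eqref{objects_are_fibrant} should then fall out of the design. For~\eqref{objects_are_fibrant}, the intended mechanism is that every object already admits the relevant path objects and satisfies the extension properties defining fibrant objects; concretely, since fibrations are defined by a right lifting property against acyclic cofibrations, one checks directly that the map from any object to the terminal object (the zero algebra) has this lifting property, using that the generating trivial cofibrations are genuine homotopy equivalences split up to the interval. For~\eqref{realizesKK}, once every object is fibrant and the cofibrant replacement of a separable \calg\ $A$ is identified with (a model of) $qA$ while $B\tensor\compacts$ is the appropriate stably-enriched model of $B$, the homotopy-classes set $Ho(A,B)$ is computed as $[\,qA, B\tensor\compacts\,]$, and Cuntz's theorem supplies the natural isomorphism with $KK(A,B)$.

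The hard part will be step two: establishing the smallness hypotheses needed for the small object argument in a category of topological algebras, where colimits are computed as completions and need not commute with the forgetful functor to sets or to uncompleted algebras. The danger is that a transfinite composite of generating trivial cofibrations fails to remain a weak equivalence after completion, or that the domains of the generators fail to be small once one passes to the $\nu$-sequentially complete setting. I expect the resolution to hinge on a careful use of the Arens--Michael decomposition (Theorem~\ref{Arens-Michael}) to reduce smallness questions to the level of the defining inverse systems of \calgs, together with the choice of $\nu$ large enough that all the relevant completions are already attained before stage $\nu$; controlling the interaction between the completion functor and filtered colimits is the genuine technical obstacle, and everything else is either formal model-category bookkeeping or a direct appeal to Cuntz's theorem.
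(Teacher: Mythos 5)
There is a genuine gap here, and it is structural rather than a matter of missing detail. Your plan keeps Cuntz's formula $KK(A,B)\cong[qA,B\tensor\compacts]$ as the dictionary, which implicitly asks $B\mapsto B\tensor\compacts$ to play the role of fibrant replacement; but that is in tension with the requirement that every object be fibrant, which forces fibrant replacement to be (weakly equivalent to) the identity. The paper resolves this by moving the stabilization to the \emph{left}: it constructs a left adjoint $?\atensorK\compacts$ to tensoring with $\compacts$ and works from the Cuntz--Phillips form $KK(A,B)\cong[qA\atensorK\compacts,B]$, so that the cofibrant replacement of a separable $A$ is $qA\atensorK\compacts$ (not $qA$, which is not cofibrant in the structure actually built) and no fibrant replacement is needed. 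Relatedly, the paper does not obtain the structure as a localization of a ``primitive'' structure whose weak equivalences are homotopy equivalences of \starhoms\ --- no such intermediate structure is constructed, and building one is itself problematic --- but instead lifts the Serre structure on $\pttop$ along the functors $\Hom(qA\atensorK\compacts,?)$, which create both fibrations and weak equivalences. This is exactly what makes fibrancy of all objects automatic, since every map of spaces to a point is a Serre fibration. The lifting also requires a nonstandard twist your outline does not anticipate: the left adjoint $(qA\atensorK\compacts)\ptatimes?$ is only a \emph{partial} adjoint, defined on pointed compact spaces, and the generating cofibrations are its values on the inclusions $S^{n-1}_+\to D^n_+$, not cone or mapping-cylinder inclusions in the \calg\ world.

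Second, the smallness problem is not solved by $\nu$-sequential completeness together with the Arens--Michael decomposition, as you propose. In $\ourcat$, objects are small only relative to \starhoms\ having the \SEP\ (every continuous $\cstar$-seminorm on the source is pulled back from the target), so one must prove that all relative cell complexes built from the generating cofibrations consist of such maps. That reduces to showing $qA\atensorK\compacts$ has the \sCSEP[], i.e.\ that $i_1\colon B\to B\ptatimes I$ extends seminorms for $B=(qA\atensorK\compacts)\ptatimes X$; and the proof of this (Proposition \ref{qgood}) is not formal bookkeeping: it uses the vanishing $KK(A,\ptcof{X,\bofh})=0$ together with Cuntz's theorem to produce a null homotopy through which a given seminorm factors. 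So a genuine $KK$-theoretic input is required precisely to make the small object argument run, and this mechanism --- the seminorm extension property and its verification for the generating cells --- is absent from your outline.
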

While properties \eqref{realizesKK} and \eqref{weq_is_KKiso} 
were the designing criteria, we wanted \eqref{cofibrantly_generated}
and \eqref{objects_are_fibrant} 
to hold as they are convenient from a model category theory point of view. 
Whenever a model category structure is cofibrantly generated,
there are so-called basic cells and an inductive process for building
cellular approximations. 
Moreover, one essentially needs only to understand the basic cells in
sufficient detail in order to build the whole model structure using
Quillen's Small Object Argument. By Property \eqref{objects_are_fibrant}
the model category at hand is right proper, which implies that homotopy
pullbacks (and general homotopy limits)
behave well in our category. We do not know whether or not our model
category is left proper (and hence proper altogether).

Since we wanted property \eqref{objects_are_fibrant} to hold,
of course it shouldn't be necessecary to apply any fibrant replacement functor
to an object $B$
for determining a homotopy set $Ho(A,B)$. For that reason, we do not
start building our model category from Cuntz's Theorem (\ref{htpyKK}) 
but a variant of it, based on the existence of an adjoint to tensoring 
with the compact operators.
For the category of inverse limits of \calgs, such an adjoint was considered
by Phillips, cf.~\cite[Proposition 5.8 on page 1084]{Phillips(1989)}.
For an inverse limit of \calgs\ $A=\lim_\alpha A_\alpha$,
let $W(A)$ denote the left adjoint of the functor which associates to an
inverse limit of \calgs\ $B=\lim_\beta B_\beta$ the
inverse limit $\lim_\beta B_\beta \tensor \compacts$.
We then have

\begin{thm}[Cuntz, Phillips]
For a separable C$\staralgs[]$ $A$ and a $\sigma$-unital C$\staralgs[]$
$B$ there is a natural isomorphism
\[ KK(A,B) \cong [W(qA), B].\]
\end{thm}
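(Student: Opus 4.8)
The plan is to derive this directly from Cuntz's Theorem \ref{htpyKK} by translating the homotopy classes of $\starhoms$ appearing there into homotopy classes of maps into a single target, using the adjunction furnished by $W$. Recall that Cuntz's theorem gives a natural isomorphism $KK(A,B) \cong [qA, B \tensor \compacts]$ for $A$ separable and $B$ $\sigma$-unital. So it suffices to produce a natural bijection
\[ [qA, B \tensor \compacts] \cong [W(qA), B]. \]
First I would set up the adjunction precisely. By Phillips' result, the functor $W$ sending an inverse limit of \calgs\ $C$ to $W(C)$ is left adjoint to the functor $B \mapsto B \tensor \compacts$ on the category of inverse limits of \calgs. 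Since $qA$ is an inverse limit of \calgs\ (indeed a \calgs\ itself, viewed as a trivial inverse limit) and $B$ is $\sigma$-unital, both objects live in the category where the adjunction is available, giving a natural bijection of \emph{morphism} sets
\[ \Hom(W(qA), B) \cong \Hom(qA, B \tensor \compacts). \]

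Next I would promote this bijection of morphism sets to a bijection of \emph{homotopy} classes. The key point is that the adjunction is compatible with the homotopy relation on both sides. Concretely, homotopies of $\starhoms$ are detected by maps into $B' \tensor C([0,1])$ (or the analogous cylinder/path object on the source side), and one must check that $W$, being a left adjoint, sends the cylinder object on $qA$ to a cylinder object on $W(qA)$, or dually that the functor $-\tensor\compacts$ preserves the relevant path object $B \mapsto B\tensor C([0,1])$. Since tensoring with $C([0,1])$ commutes with tensoring with $\compacts$ up to natural isomorphism, a homotopy $qA \to (B\tensor\compacts)\tensor C([0,1]) \cong (B \tensor C([0,1]))\tensor \compacts$ corresponds under the adjunction exactly to a homotopy $W(qA) \to B \tensor C([0,1])$. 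This identifies the homotopy relations on the two morphism sets, so the bijection of $\Hom$-sets descends to the quotient by homotopy.

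The main obstacle I expect is verifying that the adjunction interacts correctly with completeness and with the specific functor $W$ beyond the formal level — in particular, confirming that $W(qA)$ genuinely lies in the subcategory for which the isomorphism is valid and that $\sigma$-unitality of $B$ is exactly what is needed for Phillips' adjoint to behave well against $B \tensor \compacts$. One must also ensure that naturality in both variables is preserved through the composite, so that the final isomorphism $KK(A,B) \cong [W(qA), B]$ is natural, not merely a bijection for each fixed pair. Granting the adjunction and its homotopical compatibility, the result then follows by composing the natural isomorphism of Theorem \ref{htpyKK} with the homotopy-level adjunction bijection.
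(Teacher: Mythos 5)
Your proposal is correct and follows essentially the same route as the paper: the paper also obtains this statement by composing Cuntz's Theorem \ref{htpyKK} with the adjunction between $W$ (realized in the text as $? \atensorK \compacts$) and $? \tensor \compacts$, and it promotes the bijection of $\Hom$-sets to homotopy classes exactly as you do, by naturality applied to the path object $\cof[I,B]$ together with the identification $\cof[I,B]\tensor\compacts \ouriso \cof[I,B\tensor\compacts]$ (Proposition \ref{prop_bijection_for_adjoint_of_K} and Lemma \ref{take_tensor_K_out_of_cof}). No gaps beyond the caveats you already flag.
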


We now take this Theorem as our starting point and 
build our model category structure
with a weakened version of the standard lifting technique from the
Serre structure on 
$\pttop$, the category of pointed topological spaces. 
A variation is necessary because the usual adjoint pair is replaced
by a weaker condition,
where the left adjoint is only defined on compact spaces
rather than arbitrary spaces.  However, this technical detail causes
no serious added difficulty, but a detailed argument is provided in
Section \ref{lifting}.
The most difficult technical portion of our arguments arise from two
other facts. 
First, we cannot work completely within the category of compactly
generated spaces, 
as \lmccstar-algebras (other than \calgs) are typically not compactly
generated. 
For that reason, questions concerning the exponential law for mapping
spaces have to be treated 
with a lot of care, as detailed in Section \ref{mapping}.
The second technical difficulty is that we need to work with
colimits in the category of \lmccstar-algebras and colimits in that category
typically are not very easy to deal with. For that reason we introduced
what we call the \SEP[]. By definition a \starhom\ has the \SEP[]\ if and only if
each continuous \cstar-seminorm on the domain is the pullback of some \cstar-seminorm
of the target.  This condition is analogous
to the notion of topological embedding which plays a crucial role
when applying Quillen's Small
Object Argument in building the Serre structure on $\pttop$.   
Sequential colimits over \starhoms having the \SEP[]\ behave much better
than in the general case, and we then have to show that various
relevant \starhoms\ in fact do have this property in Section \ref{studysep}.

On a practical level, our main theorem (and Section \ref{both}
generally) provides a solution to an open 
question posed by Hovey, Problem 8.4 in his foundational book
\cite{Hovey}.  The model category structure also provides a variety of
new techniques which might be useful in studying $KK$-theory, along
with a common language for discussing $KK$-theory with homotopy
theorists familiar with those techniques.  As we 
mentioned above, there will be well-behaved notions of homotopy inverse
limits, which includes homotopy fibers, in our structure.  A variety
of Spectral Sequence techniques can now be applied to computing the
extended notion of $KK$-theory provided by the homotopy classes of maps in our
model structure (cf.~Remark \ref{extendingKK}).  
If one wanted to build a better model for some
specific purpose, particularly with more familiar basic cells,
the well-understood notion of a Quillen
equivalence now makes that relatively straightforward.  
If one wanted
to study how formally inverting a certain map might affect
$KK$-theory, one could now apply the theory of localizing model categories,
as introduced e.g.~in \cite{Hirschhorn(2003)}.  

Now a word on the organization of the article.
We begin with several technical results on the category
of topological \staralgs\ in Section \ref{technical}.  Section
\ref{mapping} is a 
derivation from \cite{Steenrod(1967)} and \cite{tomDieck-Kamps-Puppe(1970)}
of some properties of mapping spaces, while
Section \ref{unnamedsec} introduces a language for universal constructions 
and Section \ref{partialadjoint} is devoted to a construction which
serves as a restricted sort of adjoint to a mapping space functor. 
Section \ref{moreadjoint} contains the construction of the functor which is
left adjoint to tensoring with the \calgs[] of compact operators $\compacts$.
After that  we proceed to analyze the \SEP
and operations which preserve this property in Section \ref{studysep}.
Section \ref{lifting} presents the slight alteration of the standard
lifting argument necessary for constructing the model category
structures we require.  Finally, in Section \ref{both}
we give the definition of the model category structure 
(Definition \ref{KK-model cat})
and state the main theorem (Theorem \ref{generalKK}).
In addition, Section \ref{both} also contains the definition of a
related model category 
structure in which the weak equivalences between \calgs\ are the
$K$-theory equivalences, which might be of independent interest.

Finally we would like to thank Ralf Meyer and Andreas Thom for various
interesting and useful discussions 
while completing this piece of work.

\section{Categorical Aspects of Topological *-Algebras}
\label{technical}

\begin{defn}[topological $*$--algebras and $\lmccstar$-algebras]\
\begin{enumerate}

\item  A topological \staralgs[ ] is an algebra over $\C$ equipped
  with a Hausdorff vector space topology such that
  the multiplication $A \times A \to A$ is separately continuous
  together with an involution $\ast:A \to A$.
  The involution is not required to be continuous. 
  Let $\staralg$ denote the category of topological
  \staralgs[s] together with the continuous \starhoms[].

\item  Suppose $S$ is a set of \cstar-seminorms on an (algebraic)
  \staralgs[ ]$A$.  Then there exists a coarsest
  topology $\tau_S$ on $A$ for which each of the seminorms in $S$ is continuous
  (compare \cite[I.3.1]{Mallios(1986)}). We write $A_S$ for the
  corresponding Hausdorff quotient of $A$,
  which then is an object in  $\staralg$.
  If a topology on $A$ is given which coincides with $\tau_S$
  for some set of $\cstar$-seminorms, it is called an
  $\lmccstar$-topology. Note that we do not require a
  \lmccstar-topology to be Hausdorff.
  The abbreviation $l.m.c.$ stands for ``locally multiplicative convex''.

\item  If $A$ is a topological \staralgs[ ] and $A \homeo A_S$ for
  some set of \cstar-seminorms $S$ then $A$ is called an \lmccstar-algebra.
  Equivalently, one could say that an \lmccstar-algebra is a topological \staralgs[]
  whose topology is given by a (Hausdorff) \lmccstar-topology.
  As in \cite[1.3.4 on page 9]{Dixmier(1977)},
  it follows that the involution of an \lmccstar-algebra is continuous.
  Let $\LMCCSTAR$ denote the full subcategory of
  \lmccstar-algebras in \staralg.

\item  For any $A \in \staralg$, let $S(A)$ denote the set of all
  continuous $\cstar$-seminorms on $A$. Thus, for any $A\in \LMCCSTAR$
  we have a canonical isomorphism $A \cong A_{S(A)}$.

\end{enumerate}
\end{defn}

We follow Mallios (cf.~\cite[page 484]{Mallios(1986)})
in the sense that we do not require an $\lmccstar$-algebra to be complete.
However, we would like to require some weak form of completeness
for the objects that we are going to work with.  Along with the added
convenience of some completeness,
this allows our tensor product to generalize the minimal tensor
product of \calgs.

\begin{defn}[$\nu$-complete algebras and the category $\ourcat$]\
\begin{enumerate}

\item
Let $\nu$ be an ordinal, whose cardinality is greater than or equal to the
cardinality of the integers.
A $\nu$-sequence in a topological $*$-algebra
simply is a map $x:\nu \to A$ of underlying sets. A $\nu$-sequence
is called a Cauchy sequence if, for any open neighborhood $U$ of $0\in A$, there
is an element $\gamma \in \nu$ such that for any two
$\alpha, \beta \in \nu$ satisfying $\alpha, \beta \ge \gamma$
one has $x(\alpha)-x(\beta)\in U$. An algebra $A\in \staralg$ is called
$\nu$-complete if each Cauchy $\nu$-sequence converges to a point in $A$.
We define the category $\ourcat$ to be the full subcategory
of $\staralg$ consisiting of the $\nu$-complete $\lmccstar$-algebras.
Moreover, let $\calg$ denote the full subcategory of $\staralg$
consisting of the \calgs.
Then we have inclusions of full subcategories
\[\calg \subset \ourcat \subset \LMCCSTAR \subset \staralg.
\]

\item For any $A\in \staralg$ one can define its $\nu$-completion as
  the expected quotient of the space of Cauchy $\nu$-sequences,
which we denote $\complete{A}$. As usual $\complete{A}$ comes equipped with a
canonical dense inclusion $A \subset \complete{A}$, which yields a
functor $\complete{?}: \LMCCSTAR \to \ourcat$. By contrast, the
(honest) completion of an 
algebra $A\in \staralg$ will be denoted $\overline{A}$ and we will use
the symbol $\ouriso$ to differentiate isomorphisms in the 
category $\ourcat$ from ordinary bijections (denoted $\cong$).
\end{enumerate}
\end{defn}

For ease of reference later, we state the following consequence of the idempotence of the
$\nu$-completion operation.

\begin{lemma} \label{completion_is_leftadjoint}
The functor $\complete{?}: \LMCCSTAR \to \ourcat, \ A \to \complete{A}$
is left adjoint to the inclusion $\ourcat \subset \LMCCSTAR$ of the
$\nu$-complete  \lmccstar-algebras into the category of all
\lmccstar-algebras,  i.e.~for any $B\in \ourcat$ we have a natural bijection
\[Hom(\complete{A}, B) \cong Hom(A,B).\]
\end{lemma}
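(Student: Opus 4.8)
The plan is to recognize this as the standard adjunction attached to a \emph{reflective} subcategory: the inclusion $\ourcat \subset \LMCCSTAR$ is reflective with reflector $\complete{?}$, and the content of that reflectivity is precisely the idempotence of $\nu$-completion invoked above. Concretely, I would take the unit to be the canonical dense inclusion $\iota_A \colon A \to \complete{A}$ and define the comparison map
\[
\Phi \colon Hom(\complete{A}, B) \longrightarrow Hom(A,B), \qquad f \longmapsto f \circ \iota_A ,
\]
given by restriction along $\iota_A$. The goal is then to verify that $\Phi$ is a bijection, natural in $A$ and $B$.

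For injectivity I would use a density argument. If $f, g \colon \complete{A} \to B$ are continuous \starhoms[] whose restrictions along $\iota_A$ agree, then $f$ and $g$ are continuous maps into the Hausdorff algebra $B$ that coincide on the dense image of $\iota_A$; hence $f = g$. This step relies only on the Hausdorffness built into every object of $\staralg$ together with the density of $A$ in $\complete{A}$.

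For surjectivity I would exhibit an explicit inverse from functoriality of $\complete{?}$ and the idempotence $\iota_B \colon B \ouriso \complete{B}$ for $B \in \ourcat$. Given $\phi \colon A \to B$, applying the completion functor produces $\complete{\phi} \colon \complete{A} \to \complete{B}$, and I set $\widehat{\phi} := \iota_B^{-1} \circ \complete{\phi} \colon \complete{A} \to B$. Naturality of $\iota$ gives $\complete{\phi} \circ \iota_A = \iota_B \circ \phi$, so $\widehat{\phi} \circ \iota_A = \phi$, i.e.\ $\Phi(\widehat{\phi}) = \phi$. Together with injectivity this shows $\Phi$ is a bijection with inverse $\phi \mapsto \widehat{\phi}$. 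Naturality of $\Phi$ in both variables then follows formally from the naturality of $\iota$ and the functoriality of $\complete{?}$.

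The main obstacle is entirely absorbed into the idempotence we are allowed to assume: the assertion that for $\nu$-complete $B$ the unit $\iota_B$ is an isomorphism, equivalently that every continuous \starhom\ from $A$ into a $\nu$-complete target extends uniquely over the dense inclusion. Granting that input, the only residual checks are that $\widehat{\phi} = \iota_B^{-1} \circ \complete{\phi}$ is again a continuous \starhom\ and that it restricts to $\phi$; both are immediate from functoriality and the naturality square for $\iota$, so no fresh convergence arguments about Cauchy $\nu$-sequences are required here.
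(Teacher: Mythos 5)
Your proof is correct and is exactly the argument the paper has in mind: the paper gives no explicit proof, merely presenting the lemma as "a consequence of the idempotence of the $\nu$-completion operation," and your write-up (restriction along the dense unit $\iota_A$ for the comparison map, injectivity from density plus Hausdorffness, surjectivity from functoriality of $\complete{?}$ together with $\iota_B$ being an isomorphism for $\nu$-complete $B$) is the standard reflective-subcategory argument that fills in those details. No gaps.
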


Complete $\lmccstar$-algebras may be identified with inverse limits of
$\calgs$ by the Arens-Michael decomposition theorem
(see Theorem \ref{Arens-Michael} below).
Inverse limits of \calgs, sometimes called pro-\calgs, have been studied by
many authors, most notably by
Phillips (cf.~\cite{Phillips(1988a)}, \cite{Phillips(1988b)}).
For technical reasons that will become clear below
(Remark \ref{explain_why_lambda_completion_is_used}),
we are forced to work with the weaker notion of $\nu$-completeness.
Nevertheless, we can choose the ordinal $\nu$ to be
arbitrarily large.

\begin{lemma}
  \label{limits}
  The category $\ourcat$ has all set-indexed limits.
\end{lemma}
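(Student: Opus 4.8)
The plan is to reduce arbitrary set-indexed limits to products and equalizers, to construct these inside the ambient category $\LMCCSTAR$, and finally to descend to $\ourcat$ using Lemma \ref{completion_is_leftadjoint}. First I would build limits in $\LMCCSTAR$. Given a small diagram $D\colon J \to \LMCCSTAR$, form the algebraic product $\prod_j D(j)$ of the underlying $*$-algebras and equip it with the product topology. For each object $j$ and each $p \in S(D(j))$, the composite $p \circ \pi_j$ with the projection $\pi_j$ is again a $\cstar$-seminorm (since $\pi_j$ is a $*$-homomorphism and $p$ satisfies the $\cstar$-identity), and the set of all such seminorms generates precisely the product topology; as a product of Hausdorff spaces it is Hausdorff. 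Hence $\prod_j D(j)$ lies in $\LMCCSTAR$ and is the categorical product there. The limit of $D$ is then the $*$-subalgebra $L \subseteq \prod_j D(j)$ of families $(a_j)$ satisfying $D(\phi)(a_j) = a_{j'}$ for every morphism $\phi\colon j \to j'$ of $J$. Being a common equalizer of continuous maps into Hausdorff algebras, $L$ is closed, and its subspace topology is again of $\lmccstar$ type, generated by the restrictions of the seminorms $p \circ \pi_j$; thus $L \in \LMCCSTAR$ and it carries the universal property of $\lim D$.

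It remains to pass from $\LMCCSTAR$ to $\ourcat$. Here I would use that a full reflective subcategory is closed under all limits existing in the ambient category. By Lemma \ref{completion_is_leftadjoint} the inclusion $\ourcat \subset \LMCCSTAR$ admits the $\nu$-completion $\complete{?}$ as a left adjoint, so $\ourcat$ is reflective in $\LMCCSTAR$. Concretely, for a diagram in $\ourcat$ I would form its limit $L$ in $\LMCCSTAR$ as above and show that the reflection unit $\eta_L\colon L \to \complete{L}$ is an isomorphism: factoring each projection $L \to D(j)$, whose target already lies in $\ourcat$, through $\eta_L$ yields a cone on $\complete{L}$, and the universal property of $L$ produces a map $\complete{L} \to L$ that one checks is a two-sided inverse of $\eta_L$ by the usual uniqueness argument for reflection units. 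Consequently $L \in \ourcat$ and serves as the limit there.

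Alternatively, and more concretely, one can discharge the completeness hypothesis by hand. A product of $\nu$-complete algebras is $\nu$-complete, since a Cauchy $\nu$-sequence projects to Cauchy $\nu$-sequences in each factor and the family of componentwise limits is its limit in the product topology (using that $\nu$ is directed to handle finitely many coordinates at once). Moreover a closed subalgebra of a $\nu$-complete algebra is $\nu$-complete, because a convergent Cauchy $\nu$-sequence lying in the closed set has its limit in the closure, hence in the set. Either route shows that the limit computed in $\LMCCSTAR$ already lands in $\ourcat$.

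I expect the main obstacle to be the topological verification in the first step, namely confirming that both the product topology and the subspace topology on $L$ are genuinely $\lmccstar$-topologies (induced by sets of $\cstar$-seminorms) and Hausdorff, rather than merely locally convex; the categorical descent is then formal given Lemma \ref{completion_is_leftadjoint}, provided one checks carefully that the reflection argument yields an honest inverse to $\eta_L$.
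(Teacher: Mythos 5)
Your proof is correct and follows essentially the same route as the paper's: the paper likewise forms the limit coordinatewise as in topological spaces (citing Mallios for the fact that a limit of \lmccstar-algebras is again an \lmccstar-algebra) and checks $\nu$-completeness by observing that a Cauchy $\nu$-sequence in a product or limit is simply one that is Cauchy in each factor. Your additional reflective-subcategory argument via Lemma \ref{completion_is_leftadjoint} is a valid alternative for the final step, though the paper reserves that adjunction for the construction of colimits.
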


\begin{proof}
  Limits in $\staralg$ are formed as in topological spaces, with the
  algebraic operations defined coordinatewise
  (compare~\cite[III, Lemma 2.1]{Mallios(1986)}).
  Moreover,
  a limit in \staralg\ over a system of \lmccstar-algebras is an
  \lmccstar-algebra (compare \cite[III, (2.8)]{Mallios(1986)}).
  The limit in \staralg\ over a system
  of $\nu$-complete \lmccstar-algebras is also a
  $\nu$-complete \lmccstar-algebra, since a Cauchy $\nu$-sequence in a
  product or limit is simply a $\nu$-sequence that is Cauchy in each
  factor as in \cite[7.2 on page 57]{Kelley-Namioka(1963)}.
\end{proof}

\begin{lemma}
  \label{colimits}
  The category $\ourcat$ has all set-indexed colimits.
\end{lemma}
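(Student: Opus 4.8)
The plan is to exhibit $\ourcat$ as a full reflective subcategory of $\LMCCSTAR$ and then to invoke the general fact that a full reflective subcategory of a cocomplete category is itself cocomplete. By Lemma \ref{completion_is_leftadjoint} the $\nu$-completion functor $\complete{?}\colon \LMCCSTAR \to \ourcat$ is left adjoint to the fully faithful inclusion $\ourcat \subset \LMCCSTAR$. Granting for the moment that $\LMCCSTAR$ has all set-indexed colimits, the colimit in $\ourcat$ of a diagram $F\colon J \to \ourcat$ is then computed as the $\nu$-completion $\complete{\colim_{j} F(j)}$ of the colimit formed in $\LMCCSTAR$. Indeed, for any $B \in \ourcat$ the chain of natural bijections
\[
\Hom_{\ourcat}(\complete{\colim_{j} F(j)}, B) \cong \Hom_{\LMCCSTAR}(\colim_{j} F(j), B) \cong \lim_{j} \Hom_{\LMCCSTAR}(F(j), B) \cong \lim_{j} \Hom_{\ourcat}(F(j), B)
\]
exhibits the required universal property: the first bijection is the adjunction of Lemma \ref{completion_is_leftadjoint}, the second is the universal property of the colimit in $\LMCCSTAR$, and the third uses that the inclusion $\ourcat \subset \LMCCSTAR$ is full. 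Thus everything reduces to proving that $\LMCCSTAR$ is cocomplete.

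To construct colimits in $\LMCCSTAR$, given a diagram $F\colon J \to \LMCCSTAR$ I would first form the colimit $A$ of the underlying (purely algebraic) $\ast$-algebras, which exists since the category of $\ast$-algebras over $\C$ is cocomplete; write $\phi_j\colon F(j) \to A$ for the resulting structure maps. I would then equip $A$ with the finest $\lmccstar$-topology rendering all the $\phi_j$ continuous, namely the topology $\tau_S$ associated to the set $S$ of all \cstar-seminorms $p$ on $A$ for which $p \circ \phi_j$ is continuous for every $j \in J$, and take the candidate colimit to be the Hausdorff quotient $A_S$ in the notation of the definition above. Note that $S$ is a genuine set rather than a proper class, and that $A_S$ lies in $\LMCCSTAR$ directly from its construction.

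The principal obstacle, and the step requiring genuine care, is the verification that $A_S$ has the correct universal property in $\LMCCSTAR$. That each $\phi_j$ is continuous into $A_S$ is immediate from the definition of $S$. Conversely, a cocone $(\psi_j\colon F(j) \to B)_{j}$ in $\LMCCSTAR$ determines, via the algebraic colimit, a unique $\ast$-homomorphism $\bar\psi\colon A \to B$ with $\bar\psi \circ \phi_j = \psi_j$, and one must check that $\bar\psi$ is continuous for $\tau_S$ and descends to the Hausdorff quotient. Here I would argue that for every continuous \cstar-seminorm $q$ on $B$ the composite $q \circ \bar\psi$ is a \cstar-seminorm on $A$ satisfying $(q \circ \bar\psi) \circ \phi_j = q \circ \psi_j$, which is continuous; hence $q \circ \bar\psi \in S$. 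Since the topology of $B$ is generated by such seminorms $q$, this simultaneously yields continuity of $\bar\psi$ and, because $B$ is Hausdorff so that the kernels of the $q$ intersect to zero, shows that $\bar\psi$ annihilates the subspace collapsed in passing to $A_S$; thus $\bar\psi$ factors through a continuous $\ast$-homomorphism $A_S \to B$, as required. The delicate points throughout are the correct choice of the seminorm family $S$ and the interaction between continuity and the Hausdorff quotient; once these are in place the reflective-subcategory argument of the first paragraph completes the proof.
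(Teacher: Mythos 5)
Your proof is correct, and its first half coincides exactly with the paper's: both reduce the problem to cocompleteness of $\LMCCSTAR$ via the reflection $\complete{?}$ of Lemma \ref{completion_is_leftadjoint}. Where you genuinely diverge is in how colimits in $\LMCCSTAR$ are built. The paper follows the standard decomposition (coproducts, coequalizers, directed colimits suffice, per MacLane), treating each case separately: directed colimits are imported from Mallios, coequalizers are Hausdorff quotients $B_{S(u,v)}$ by the seminorms vanishing on the relevant ideal, and coproducts are topologized by pulling back seminorms along all \starhoms\ out of the algebraic free product with continuous structure maps. You instead construct an arbitrary colimit in one stroke: form the algebraic colimit $A$, take $S$ to be the set of \emph{all} $\cstar$-seminorms on $A$ whose composites with the structure maps are continuous, and pass to the Hausdorff quotient $A_S$. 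This is a uniform generalization of the paper's coproduct construction to arbitrary diagrams, and your verification of the universal property (pulling back each $q \in S(B)$ to an element of $S$, and using Hausdorffness of $B$ to see that $\bar\psi$ kills the collapsed subspace) is sound; the set-theoretic worry is also handled correctly, since $S$ is a subset of $\R^A$. What the paper's route buys is the ability to cite existing results and to record explicit, reusable descriptions of coequalizers and coproducts (the latter is referenced again in \ref{univ_examples}); what your route buys is economy — no appeal to the MacLane decomposition and no separate treatment of the directed case.
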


\begin{proof}
  First note that it suffices to construct
  colimits in $\LMCCSTAR$. Colimits in $\ourcat$ are
  then obtained by applying the $\nu$-completion
  to the corresponding colimit in $\LMCCSTAR$
  (by \ref{completion_is_leftadjoint}).
  Also, as commented above Theorem IX.1.1 in \cite{MacLane}, it suffices to
  construct coproducts, coequalizers (also called difference cokernels)
  and directed colimits in $\LMCCSTAR$.

  The existence of directed  colimits can be shown as
  in the proof of Lemma 2.2 in Section IV of \cite{Mallios(1986)}.
 
  Given two morphisms $u,v:A \to B$ in $\LMCCSTAR$, let $S(u,v)\subset S(B)$
  denote the set of continuous $C^*$-seminorms on $B$ which vanish on the
  ideal generated by elements of the form $u(a)-v(a)$ for $a \in A$.
  Then the desired coequalizer is given by the canonical map $B \to B_{S(u,v)}$.
   
  Suppose $\{A_i\}$ denotes a set of $\lmccstar$-algebras indexed
  on the set $I$.  We define a topology on the algebraic free product
  $\ast_i A_i$ by considering the collection of all \starhoms from the
  free product to $\lmccstar$-agebras $B$ such that the canonical
  algebraic compositions $A_i \to \ast_i A_i \to B$ are continuous for
  each $i \in I$. Given such a $\starhom$ we pull back the defining
  $\cstar$-seminorms of $B$ to the free product $\ast_i A_i$.
  As there is only a set of possible $\cstar$-seminorms on $\ast_i A_i$,
  this procedure yields a set $S$ of seminorms on $\ast_i A_i$ (even though
  the collection of \starhoms we are considering might not form a set).
  It is now easy to check that $(\ast_i A_i)_S$
  is the coproduct of the $A_i$ in $\LMCCSTAR$.
  (For a second approach see \ref{univ_examples} \eqref{free_product_in_G_and R}).
\end{proof}

  Let $A$ be a topological \staralgs[.]
  For any $p\in S(A)$, let $A_p$ denote the Hausdorff quotient
  $A_{\{p\}}$. Then $A_p$ is a
  pre-$\cstar$-algebra in the sense that its completion
  $\overline{A_p}$ 
  is a $\cstar$-algebra (here $\nu$-completion agrees with completion
  since the topology is first countable).
  Notice also that the set $S(A)$ is partially ordered by
  $p \leq q$ when $p(a) \leq q(a)$ for each $a \in A$, so there is a
  canonical
  map $A \to \lim A_p$, where the inverse limit is taken over the
  directed system $S(A)$.
  This map is shown to be an isomorphism for any complete
  $\lmccstar$-algebra by the remarkable Arens-Michael decomposition
  theorem.

\begin{thm}
  [\cite{Arens(1952)},\cite{Arens(1958)},\cite{Michael(1952)},
 {compare \cite[III.Theorem 3.1 on page 88]{Mallios(1986)}}]
  \label{Arens-Michael}
  For an $\lmccstar$-algebra $A$ there is a string of natural inclusions
  \[ A \subset \lim A_p \subset \lim \overline{A_p} \ouriso \overline{A}.\]
  In particular, for any complete $\lmccstar$-algebra there is a natural isomorphism
  \[ A \ouriso \lim A_p.\]
\end{thm}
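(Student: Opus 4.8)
The plan is to realize both $\lim A_p$ and $\lim \overline{A_p}$ as inverse limits over the directed poset $S(A)$, to identify $\lim \overline{A_p}$ with the completion $\overline{A}$ through its universal property, and then to read off the two inclusions; the complete case will follow formally. First I would verify that $(S(A),\le)$ is directed, which is what makes these inverse limits meaningful. Given $p,q \in S(A)$, the pointwise maximum $\max(p,q)$ is again a continuous $\cstar$-seminorm: it is plainly a continuous seminorm, and the $\cstar$-identity is inherited coordinatewise, $\max(p,q)(a^*a) = \max(p(a)^2,q(a)^2) = (\max(p,q)(a))^2$, so $\max(p,q) \in S(A)$ dominates both. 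For $p \le q$ one has $\ker q \subset \ker p$, so the identity of $A$ descends to a contractive $\starhom$ $A_q \to A_p$ and, by continuous extension, to $\overline{A_q} \to \overline{A_p}$. These assemble into two inverse systems over $S(A)$, and the compatible maps $A \to A_p \subset \overline{A_p}$ induce $A \to \lim A_p \to \lim \overline{A_p}$.

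For the two inclusions: since $A \cong A_{S(A)}$ is Hausdorff, the seminorms in $S(A)$ separate points, hence $\bigcap_p \ker p = 0$ and $A \to \lim A_p$ is injective. As each $A_p \to \overline{A_p}$ is injective and the formation of inverse limits preserves injectivity, $\lim A_p \to \lim \overline{A_p}$ is injective as well. The real content is the identification $\lim \overline{A_p} \ouriso \overline{A}$, which I would deduce from the universal property of the completion by checking three points. (a) $\lim \overline{A_p}$ is complete, being a closed \substaralg\ of the product $\prod_p \overline{A_p}$, which is complete as a product of complete $\cstar$-algebras. (b) The composite $A \to \lim \overline{A_p}$ is a topological embedding: the subspace topology inherited from the product is generated by the pullbacks of the norms of the $\overline{A_p}$, and these restrict on $A$ to precisely the seminorms $p \in S(A)$, so that topology is $\tau_{S(A)}$, the topology of $A$. (c) The image of $A$ is dense: a basic neighborhood of a point $(x_p) \in \lim \overline{A_p}$ is determined by finitely many indices $p_1,\dots,p_n$ and some $\epsilon > 0$; putting $q = \max_i p_i \in S(A)$ and using that $A_q$ is dense in $\overline{A_q}$, pick $a \in A$ with $\|a - x_q\|_{\overline{A_q}} < \epsilon$, and since each connecting map $\overline{A_q} \to \overline{A_{p_i}}$ is contractive and $(x_p)$ is compatible, the image of $a$ lies in the prescribed neighborhood. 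Together (a)--(c) exhibit $\lim \overline{A_p}$ as a complete algebra containing $A$ as a dense subspace carrying its own topology, hence as $\overline{A}$; naturality in $A$ holds because a continuous $\starhom$ pulls $\cstar$-seminorms back to $\cstar$-seminorms and therefore induces maps of both inverse systems.

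Finally, if $A$ is complete then the dense inclusion $A \subset \overline{A}$ is an isomorphism, and under the identification $\lim \overline{A_p} \ouriso \overline{A} = A$ the established chain becomes $A \subset \lim A_p \subset \lim \overline{A_p} = A$, which forces $A \ouriso \lim A_p$. I expect the density step (c) to be the main obstacle, since it is exactly where directedness of $S(A)$ is indispensable: it lets finitely many coordinates be dominated by a single seminorm $q$, so the approximation can be carried out inside the one pre-$\cstar$-algebra $A_q$ and thereby produces an honest element of $A$, rather than only a coordinatewise approximation that need not come from $A$ itself.
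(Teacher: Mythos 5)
The paper does not prove this theorem; it is quoted from the classical literature (Arens, Michael, and Mallios III.Theorem 3.1), so there is no in-paper argument to compare against. Your proof is correct and is essentially the standard argument from those sources: directedness of $S(A)$ via pointwise maxima, the identification of $\lim \overline{A_p}$ with $\overline{A}$ through completeness of a closed subalgebra of the product, the topological embedding of $A$, and the density argument (where, as you rightly emphasize, directedness is the essential ingredient), with the complete case following formally.
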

 
  The Arens-Michael decomposition theorem is quite helpful in
  understanding the nature of \lmccstar-algebras in general.
  This already becomes apparent when
  we are considering the tensor products of \lmccstar-algebras, which
  we now introduce.

  As in the category of \calgs, there are various possibilities to
  define a reasonable tensor product in $\ourcat$. Recall that there is the
  notion of a maximal or a minimal tensor product
  in the category of \calgs, and both definitions agree if one factor is nuclear.
  By the tensor product of \calgs, we will always mean the minimal
  tensor product.

\begin{defn}\label{tensor_product}
  Let $A, B\in \ourcat$. Given $p\in S(A)$ and $q\in S(B)$ let
  $pq$ denote the $\cstar$-seminorm on the algebraic tensor product
  $A \tensor_{alg} B$ obtained by pulling back the
  $\cstar$-norm on $\overline{A_p} \tensor \overline{B_q}$ along the canonical
  homomorphism $A \tensor_{alg} B \to \overline{A_p} \tensor  \overline{B_q}$.
  Let $S(AB)$ be the set of $\cstar$-seminorms
  given by $\{ pq \ | (p,q) \in S(A) \times S(B)\}$. We define the
  tensor product $A \tensor B$ in $\ourcat$ to be the $\nu$-completion
  of the $\lmccstar$-algebra $(A \tensor_{alg} B)_{S(AB)}$.
\end{defn}

  By the Arens-Michael decomposition theorem,
  $(A \tensor_{alg} B)_{S(AB)}$ can also be
  identified with the Hausdorff quotient of the
  \staralgs[] $A \tensor_{alg} B$ equipped with
  the topology induced by the canonical homomorphism
  \[ A \tensor_{alg} B \to \prod_{pq\in S(AB)} \overline{A_p} \tensor \overline{B_q}.\]

\begin{remark}
  The tensor product described above gives $\ourcat$ a symmetric monoidal structure.
  The fact that we restrict to $\nu$-complete algebras ensures that the
  category of \calgs\ forms a symmetric monoidal subcategory of $\ourcat$.
\end{remark}

\section{On Mapping Spaces}
\label{mapping}

For any pair of Hausdorff spaces $X$ and $Y$, let $\cof[X,Y]$ denote the
space of continuous maps with the compact-open topology, which is then
a Hausdorff space as well.
Similary, for a pair of pointed Hausdorff spaces $X$ and $Y$, let $\ptcof{X,Y}$
denote the subspace of $\cof[X,Y]$ consisiting of the basepoint preserving maps.
Let $\cpt$ denote the category of compact spaces and
let $\ptcpt$ denote the category of pointed compact spaces.
Note that adding a disjoint basepoint yields an inclusion of
categories $\cpt \subset \ptcpt$. Moreover, given a pointed
Hausdorff space $Y$ and a compact space $X$ we have an obvious homeomorphism
$\cof[X,Y] \cong \ptcof{X_+,Y}$, where $X_+$ denotes adding a disjoint
basepoint to $X$.

Now notice that for $A$ and $B \in \ourcat$, one can consider
the set of continuous \starhoms $\Hom(A,B)$ as a topological space with the
compact-open topology and think of $0$ as the basepoint in each
algebra.  Thus, $\Hom(A,B)$ is a subspace of
$\ptcof{A,B}$, hence is a (pointed) Hausdorff space as well.
We will use the symbol $\compacts$ to denote the \calgs[] of compact operators
on a separable Hilbert space.

Given $X \in \cpt$ and $B \in \staralg$, it should be clear that
$\cof[X,B] \in \staralg$, where each algebraic operation is defined
pointwise, and similarly for the pointed mapping space
$\ptcof{X,B}$ for $X\in \ptcpt$.
However, we will need the following stronger result.

\begin{lemma}
  \label{cin}
  Suppose $X \in \cpt$ and $B \in \ourcat$, then
  $\cof[X,B] \in \ourcat$. Similarly, if $X\in \ptcpt$,
then $\ptcof{X,B}\in \ourcat$.
\end{lemma}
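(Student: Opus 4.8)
The claim has two parts beyond what was already observed (that $\cof[X,B]\in\staralg$ with pointwise operations): that the compact-open topology on $\cof[X,B]$ is an $\lmccstar$-topology, and that $\cof[X,B]$ is $\nu$-complete. The plan is to produce an explicit generating set of $\cstar$-seminorms. For each $p\in S(B)$ I would set
\[\hat p(f)=\sup_{x\in X}p(f(x)),\]
which is finite because $p\circ f\colon X\to\R$ is continuous on the compact space $X$, hence bounded. A pointwise check shows each $\hat p$ is a $\cstar$-seminorm: the triangle inequality, homogeneity, submultiplicativity ($p(f(x)g(x))\le p(f(x))p(g(x))$) and the symmetry $\hat p(f^*)=\hat p(f)$ all pass to suprema, while the $\cstar$-identity $\hat p(f^*f)=\hat p(f)^2$ follows from the pointwise identity $p(f(x)^*f(x))=p(f(x))^2$ (the supremum of the squares is the square of the supremum, since $t\mapsto t^2$ is increasing on $[0,\infty)$).

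Next I would identify the topology generated by $\{\hat p:p\in S(B)\}$ with the compact-open topology. Since $X$ is compact, the compact-open topology coincides with the topology of uniform convergence, and because the topology of $B$ is generated by the $\cstar$-seminorms $S(B)$, uniform convergence is exactly the topology defined by the seminorms $\hat p$. This is the one place where compactness of $X$ is genuinely used, and I regard it as the main technical point; I would invoke the mapping-space references of this section for the identification. As each $\hat p$ is then a continuous $\cstar$-seminorm and $\cof[X,B]$ is already Hausdorff, the Hausdorff quotient is trivial and $\cof[X,B]\cong\cof[X,B]_{\{\hat p\}}$ is an $\lmccstar$-algebra.

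For $\nu$-completeness let $(f_\alpha)_{\alpha\in\nu}$ be a Cauchy $\nu$-sequence. For fixed $x$ the evaluation $\eval_x\colon\cof[X,B]\to B$ is a continuous $\starhom$, since $p(\eval_x(f))\le\hat p(f)$; being uniformly continuous it carries $(f_\alpha)$ to a Cauchy $\nu$-sequence $(f_\alpha(x))$ in $B$, which converges by $\nu$-completeness of $B$ to a point I call $f(x)$. Applying the Cauchy condition to the neighborhoods $\{g:\hat p(g)<\epsilon\}$ gives $\hat p(f_\alpha-f_\beta)<\epsilon$ for $\alpha,\beta\ge\gamma$; holding $\alpha$ fixed and letting $f_\beta(x)\to f(x)$ with $p$ continuous yields $p(f_\alpha(x)-f(x))\le\epsilon$ for every $x$, hence $\hat p(f_\alpha-f)\le\epsilon$ for $\alpha\ge\gamma$, i.e.\ $f_\alpha\to f$ uniformly. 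A standard $\epsilon/3$ estimate (which needs only a single good index $\alpha$ for each pair $(p,\epsilon)$, so the transfinite index set causes no trouble) shows $f$ is continuous; thus $f\in\cof[X,B]$ and $f_\alpha\to f$ there, establishing $\nu$-completeness.

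Finally, for the pointed case I would identify $\ptcof{X,B}$ with the kernel of $\eval_{x_0}$ at the basepoint $x_0$ of $X$, which is a closed sub-$*$-algebra of $\cof[X,B]$. Restricting the seminorms $\hat p$ exhibits it as an $\lmccstar$-algebra, and a closed subspace of a $\nu$-complete algebra is again $\nu$-complete (a Cauchy $\nu$-sequence converges in the ambient algebra and its limit lies in the closed subspace). Hence $\ptcof{X,B}\in\ourcat$ as well.
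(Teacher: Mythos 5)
Your proof is correct and follows essentially the same route as the paper's: both use the seminorms $\sup_{x\in X}p(f(x))$ for $p\in S(B)$, identify the resulting topology with the compact-open (= uniform convergence) topology using compactness of $X$, and handle the pointed case as a closed subalgebra. You simply spell out the $\nu$-completeness step (pointwise limits plus the $\epsilon/3$ argument) that the paper leaves as an assertion.
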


\begin{proof}
  First notice that the compact-open topology on $\cof[X,B]$ is the
  same as the topology of uniform convergence, since $X$ is compact.
  Given $p \in S(B)$, one has an associated $\cstar$-seminorm $p_X$ on
  $\cof[X,B]$ given by
\[p_X(f)=\underset{x \in X}{sup}\ p(f(x)).
\]
  In fact, the collection of these $\cstar$-seminorms $p_X$ on
  $\cof[X,B]$ determines the topology of uniform convergence.  To
  see this, notice the sets
\[U_{p,r,f}=\{g \in \cof[X,B]:\forall x \in X, p(f(x)-g(x))< r\}
\]
  with $p \in S(B)$, $r \in \R_{>0}$ and $f \in \cof[X,B]$ form a
  subbasis for the topology in both cases.  Thus, $\cof[X,B]$ is an
  \lmccstar-algebra. Since $B\in \ourcat$ is $\nu$-complete, so is
  $\cof[X,B]$ with this topology, thus $\cof[X,B] \in \ourcat$.

  For $X\in \ptcpt$, the algebra $\ptcof{X,B}$ is a closed $\nu$-complete
  subalgebra of $\cof[X,B]$, hence we also have $\ptcof{B,X}\in \ourcat$.
\end{proof}

The following lemma is provided here mainly for ease of reference later.

\begin{lemma} \label{take_tensor_K_out_of_cof}
 If $X \in \cpt$ and $B \in \ourcat$ is separable, then there is an
 isomorphism in $\ourcat$, 
$\cof[X, B] \tensor \compacts \ouriso \cof[X, B \tensor \compacts]$.
If $X\in \ptcpt$ and $B\in\ourcat$ is separable, we have a corresponding
isomorphism $\ptcof{X,B} \otimes \compacts \ouriso \ptcof{X, B \tensor \compacts}$.
\end{lemma}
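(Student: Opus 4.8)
The plan is to write down an explicit natural comparison map and then verify it is an isomorphism by reducing, via the Arens-Michael decomposition (Theorem \ref{Arens-Michael}), to a statement about the defining \cstar-seminorms and their associated \cstar-algebra building blocks. First I would construct the map: there is an evident algebra homomorphism
\[ \Phi_0 : \cof[X,B] \tensor_{alg} \compacts \longrightarrow \cof[X, B\tensor\compacts], \qquad f \tensor k \longmapsto \bigl(x \mapsto f(x)\tensor k\bigr), \]
whose target already lies in \ourcat, so by Lemma \ref{completion_is_leftadjoint} it factors through a map $\Phi$ out of the tensor product $\cof[X,B]\tensor\compacts$. Everything then reduces to showing $\Phi$ is an isomorphism.

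Second, I would reduce to a cofinal family of seminorms. By the proof of Lemma \ref{cin}, the topology of $\cof[X,B]$ is determined by the seminorms $p_X(f)=\sup_{x\in X}p(f(x))$ for $p\in S(B)$. Moreover, since a \cstar-seminorm dominated by a constant multiple of another \cstar-seminorm is already dominated by it, every element of $S(\cof[X,B])$ is bounded above by some $p_X$, so the family $\{p_X\}_{p\in S(B)}$ is cofinal in $S(\cof[X,B])$. Consequently, in forming both tensor products it suffices to work with the cofinal families of \cstar-seminorms $\{p_X\cdot q\}$ on the left and $\{(pq)_X\}$ on the right, where $p$ ranges over $S(B)$ and $q$ is the unique \cstar-norm on \compacts.

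Third, I would identify the building blocks, which I expect to be the main obstacle. On the one hand, a partition-of-unity argument (using that $B$ maps densely onto $\overline{B_p}$, together with compactness of $X$) shows the image of $\cof[X,B]$ is dense in $\cof[X,\overline{B_p}]$ for the supremum norm, so $\overline{(\cof[X,B])_{p_X}}\ouriso \cof[X,\overline{B_p}]\ouriso \cof[X]\tensor\overline{B_p}$, the last step being the standard identity $\cof[X,D]\ouriso\cof[X]\tensor D$ for a \cstar-algebra $D$ and the nuclear algebra $\cof[X]$. On the other hand, by Definition \ref{tensor_product} the $p$-building block of the right-hand side is $\cof[X]\tensor\overline{(B\tensor\compacts)_{pq}}\ouriso \cof[X]\tensor\overline{B_p}\tensor\compacts$. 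Using associativity of the minimal tensor product, both sides therefore have $p$-th building block $\cof[X]\tensor\overline{B_p}\tensor\compacts$, and a direct check shows $\Phi_0$ pulls back $(pq)_X$ to $p_X\cdot q$ and realizes the canonical identification of these blocks. Hence $\Phi_0$ is a topological embedding with dense image, so $\Phi$ induces the desired isomorphism $\cof[X,B]\tensor\compacts \ouriso \cof[X,B\tensor\compacts]$ on $\nu$-completions; here separability of $B$ is convenient for controlling $\overline{B_p}$ and the tensor product $B\tensor\compacts$.

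Finally, for the pointed statement one repeats the argument above verbatim with $\ptcof{X,-}$ in place of $\cof[X,-]$, the $p$-building block now being $\czero[X\setminus *]\tensor\overline{B_p}\tensor\compacts$ since $\ptcof{X,\overline{B_p}}\ouriso\czero[X\setminus *]\tensor\overline{B_p}$. Alternatively, the pointed case follows from the unpointed one by passing to kernels in the split extension $0\to\ptcof{X,B}\to\cof[X,B]\xrightarrow{\eval}B\to 0$, which is split by the constant-function inclusion and is compatible with the analogous extension for $B\tensor\compacts$.
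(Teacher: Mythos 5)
Your construction of the comparison map, the cofinality analysis of the seminorms, and the identification of the building blocks on both sides as $\cof[X]\tensor\overline{B_p}\tensor\compacts$ are all correct (and amount to a fleshed-out version of the embedding step, which the paper simply quotes from Phillips). The pointed case via the split extension is also exactly what the paper does. The problem is the final inference: knowing that $\Phi_0$ is a topological embedding with dense image in the $\nu$-complete algebra $\cof[X,B\tensor\compacts]$ does \emph{not} give surjectivity of $\Phi$. The $\nu$-completion of $(\cof[X,B]\tensor_{alg}\compacts)_S$ adjoins only limits of Cauchy $\nu$-sequences, whereas density hands you, for a given $f$, a net indexed by finite sets of seminorms times $\R_{>0}$. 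Since $\nu$ may be as small as $\omega$ while the seminorm directed set of $\cof[X,B\tensor\compacts]$ need not have countable cofinality even for separable $B$ (for instance $B=\prod_{t\in\R}\C$ is separable but its topology is not first countable), you cannot in general refine that net to a $\nu$-sequence. Your building-block computation really shows only that both sides embed densely in the same honest completion $\lim_p \cof[X]\tensor\overline{B_p}\tensor\compacts$; the two $\nu$-completions could a priori be different subalgebras of it.

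What is missing is an explicit $\nu$-sequential approximation of an arbitrary $f\in\cof[X,B\tensor\compacts]$ by elements of the algebraic tensor product, and this is precisely the content of the paper's proof: writing $f_{ij}(x)=x^B_{ij}(f(x))\in B$ with respect to matrix units $e_{ij}$ of $\compacts$, the finite partial sums of $\sum_{ij}f_{ij}\tensor e_{ij}$ form a single countable Cauchy sequence in $\cof[X,B]\tensor_{alg}\compacts$ which converges to $f$ in every seminorm $(pq)_X$ simultaneously (uniform convergence of $(1\tensor P_N)f(x)(1\tensor P_N)\to f(x)$ over the compact set $X$). That one countable sequence is what places $f$ in the image of the $\nu$-completion for every $\nu\geq\omega$, and it is the step your density argument cannot replace.
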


\begin{proof}
Since by the previous lemma  $\cof[X,B \tensor \compacts]$ is $\nu$-complete,
the canonical injection
$\cof[X,B] \tensor_{alg} \compacts \to \cof[X,B \tensor \compacts]$ yields an
inclusion $\cof[X,B] \tensor \compacts \subset \cof[X,B\tensor
  \compacts]$ (compare \cite[Proposition 3.4]{Phillips(1988a)}).
Thus, we have to show that the map is surjective.

For pairs of natural numbers $(i,j)$, let $x_{ij}: \compacts \to \C, k \mapsto k_{ij}$
denote the family of linear maps which are defined through the equation
$k(e_i) =\sum_{i,j} k_{ij} e_j$ for some fixed orthonormal basis $\{e_i\}_{i\in \N}$
of the underlying separable Hilbert space that $\compacts$ is acting on.
Moreover, abusing notation, let $x^B_{ij}$ also
stand for the corresponding linear map $x^B_{ij}: B\tensor \compacts \to B\tensor \C \ouriso B$
that is obtained by tensoring the map $x_{ij}:\compacts \to \C$ with $id: B \to B$.
In addition, let $e_{ij}\in\compacts$ denote the compact operator which
maps a general vector $\sum_k h_k e_k$ (with the $h_k\in \C$) to $h_j e_i$.
Each element $b\in B\tensor \compacts$ then has a unique presentation
$b= \sum_{ij} x^B_{ij}(b) \tensor e_{ij}$.

Let $f:X \to B\tensor \compacts$ be a pointed continuous function
and define corresponding functions $f_{ij}\in \cof[X,B]$ by
$f_{ij}(x)= x^B_{ij}(f(x))$.
The sum $\sum _{ij} f_{ij} \tensor e_{ij}\in \cof[X,B] \tensor \compacts$
then is a pre-image for $f\in \cof[X,B \tensor \compacts]$. 

To obtain the pointed statement observe that evaluation at the basepoint
induces a natural transformation of short exact sequences
\[
\xymatrix{ 
0 \ar[d]\ar[r] & \ptcof{X,B} \tensor \compacts \ar[d]\ar[r] &
\cof[X,B] \tensor \compacts \ar[d]^{\ouriso} \ar[r]^-{ev \tensor id}& B \tensor \compacts\ar[d]^{\ouriso}\ar[r]& 0\ar[d]\\
0 \ar[r] & \ptcof{X,B \tensor \compacts}\ar[r] &
\cof[X,B \tensor \compacts] \ar[r]^-{ev}& B \tensor \compacts \ar[r]& 0
}
\] 
The pointed statement therefore follows from the unpointed statement. 
\end{proof}

Notice, in the previous proof we used the $\nu$-completion with
$\nu$ at least countable to see that the infinite sum
still lies in $\cof[X,B]\tensor \compacts$.

In order to prove our key result on mapping spaces,
we will first require another technical result
which comes from \cite{Steenrod(1967)} and
\cite{tomDieck-Kamps-Puppe(1970)}.

\begin{lemma}
  \label{steenrods}
  Suppose $X$ is compact, while $Y$ and
  $Z$ are Hausdorff.  Then there is a natural inclusion
\[\cof[Y,\cof[X,Z]] \to \cof[X,\cof[Y,Z]]
\]
  which is a homeomorphism if $Y$ is compactly generated.
\end{lemma}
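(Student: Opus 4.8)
The map in question is the exponential (``swap'') correspondence $\Phi$, sending a continuous $f\colon Y\to\cof[X,Z]$ to the map $\Phi(f)$ determined by $\Phi(f)(x)(y)=f(y)(x)$. The plan is to factor $\Phi$ through the two-variable mapping space $\cof[X\times Y,Z]$ by means of the two association (currying) maps
\[
\gamma\colon \cof[X\times Y,Z]\to\cof[{Y,\cof[X,Z]}],\qquad \gamma(h)(y)(x)=h(x,y),
\]
\[
\alpha\colon \cof[X\times Y,Z]\to\cof[{X,\cof[Y,Z]}],\qquad \alpha(h)(x)(y)=h(x,y).
\]
A direct check---using the tube lemma to see that currying a continuous function of two variables yields a continuous map into the relevant mapping space---shows that $\gamma$ and $\alpha$ are continuous injections for all Hausdorff $X,Y,Z$, and one verifies from the formulas that $\Phi=\alpha\circ\gamma^{-1}$.

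First I would dispose of the inclusion statement. Since $X$ is compact, hence locally compact Hausdorff, the functor $\cof[X,-]$ is right adjoint to $(-)\times X$; thus $\gamma$, composed with the swap homeomorphism $\cof[X\times Y,Z]\cong\cof[Y\times X,Z]$, is exactly the adjunction isomorphism and is therefore a homeomorphism. As $\alpha$ is always a continuous injection, the composite $\Phi=\alpha\circ\gamma^{-1}$ is a continuous injection as well, yielding the asserted natural inclusion; naturality in all three variables is immediate from the defining formula for $\Phi$.

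It remains to upgrade $\Phi$ to a homeomorphism when $Y$ is compactly generated, and since $\gamma$ is already a homeomorphism this reduces to showing that $\alpha$ is a homeomorphism under this hypothesis. For surjectivity I would argue as follows: given $g\in\cof[{X,\cof[Y,Z]}]$, I must show the uncurried function $\tilde g(x,y)=g(x)(y)$ is continuous on $X\times Y$. Because $X$ is compact Hausdorff and $Y$ is compactly generated, the product $X\times Y$ is again compactly generated, so it suffices to check continuity of $\tilde g$ on each slab $X\times C$ with $C\subseteq Y$ compact (every compact subset of $X\times Y$ projects into such a $C$). On $X\times C$ one has $\tilde g|_{X\times C}=\mathrm{ev}_C\circ(g_C\times\mathrm{id}_C)$, where $g_C\colon X\to\cof[C,Z]$ is $g$ followed by restriction along $C\hookrightarrow Y$ and $\mathrm{ev}_C\colon\cof[C,Z]\times C\to Z$ is evaluation, which is continuous because $C$ is compact. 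Hence $\tilde g$ is continuous and $\alpha(\tilde g)=g$.

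The genuinely delicate step, which I expect to be the main obstacle, is the continuity of the inverse $\alpha^{-1}$ (equivalently, the openness of $\alpha$). Here the naive approach breaks down: continuity of the uncurried assignment $g\mapsto\tilde g$ requires comparing the compact-open topology of $\cof[X\times Y,Z]$ with that of the iterated space $\cof[{X,\cof[Y,Z]}]$, and the standard subbasis of the latter does not control the former without a compactness or compact-generation hypothesis on $Y$. This is precisely the content of the refined exponential law for compactly generated spaces; I would invoke the results of \cite{Steenrod(1967)} and \cite{tomDieck-Kamps-Puppe(1970)} to conclude that $\alpha$ is a homeomorphism once $Y$ is compactly generated, the surjectivity computation above confirming that their hypotheses apply. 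Combining this with the homeomorphism $\gamma$ then shows that $\Phi=\alpha\circ\gamma^{-1}$ is a homeomorphism, completing the proof.
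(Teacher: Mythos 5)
Your proposal follows essentially the same route as the paper: both factor the swap map through $\cof[X\times Y,Z]$ via the two currying maps, observe that the one landing in $\cof[Y,\cof[X,Z]]$ is a homeomorphism because $X$ is compact, and show that the other becomes surjective when $Y$ (hence $X\times Y$) is compactly generated, invoking the same results of Steenrod and tom Dieck--Kamps--Puppe. The one substantive difference is that the paper records at the outset that \emph{both} currying maps are topological inclusions (embeddings) for Hausdorff $X$, $Y$, $Z$, so that surjectivity of your $\alpha$ immediately upgrades it to a homeomorphism and the composite is an embedding; your version claims only continuous injectivity for $\alpha$, which is why you must re-invoke the exponential-law literature for openness at the end and, strictly speaking, delivers only a continuous injection where the lemma's ``inclusion'' is used later (in Proposition \ref{cadjoint}) as a topological inclusion.
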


\begin{proof}
  Define topological inclusions
  $\iota_X:\cof[X \times Y,Z] \to \cof[X,\cof[Y,Z]]$ by
\[(\iota_X(x))(y)=f(x,y)
\]
  and similarly
  $\iota_Y:\cof[X \times Y,Z] \to \cof[Y,\cof[X,Z]]$ by
\[(\iota_Y(y))(x)=f(x,y).
\] 
  These maps are inclusions as both $X$ and $Y$ are Hausdorff spaces
  (compare \cite[Satz (4.9) on page 88]{tomDieck-Kamps-Puppe(1970)}).
  In general, these inclusions are not surjective.
  However, $\iota_Y$ is surjective since $X$ is compact (see
  \cite[3.4.7]{Steenrod(1967)}) while $\iota_X$ is onto if $Y$, hence $X \times
  Y$, is compactly generated (see \cite[3.4.9]{Steenrod(1967)}).  Thus, in
  any case $\iota_X \circ (\iota_Y)^{-1}$ is a topological inclusion
  and it becomes a homeomorphism if $Y$ is compactly generated.
\end{proof}

We now have our key property of mapping spaces.

\begin{prop}
  \label{cadjoint}
  For $A$,$B \in \ourcat$ and $X \in \cpt$, there is a natural
  inclusion
\begin{equation} \label{bijection_cadjoint}
\Hom(A,\cof[X,B]) \to \cof[X,\Hom(A,B)]
\end{equation}
  which is a homeomorphism if $A$ is compactly generated as a
  topological space.  In particular, the inclusion is a homeomorphism
  if $A \in \calg$.

  In case $X\in \ptcpt$, the same is true 
  for the corresponding map of pointed mapping spaces
\begin{equation} \label{bijection_cadjoint_pointed}
\Hom(A,\ptcof{X,B}) \to \ptcof{X,\Hom(A,B)}.
\end{equation}
\end{prop}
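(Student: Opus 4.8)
The plan is to deduce both assertions from the general mapping-space result Lemma~\ref{steenrods}, applied with $Y$ the underlying topological space of $A$ and $Z$ the underlying space of $B$, by recognizing the two $\starhom$-spaces as subspaces of the ordinary mapping spaces and checking that the swap-of-variables map respects the $\starhom$ condition. Concretely, I would first invoke Lemma~\ref{steenrods} to obtain the natural topological inclusion
\[
\cof[A,{\cof[X,B]}] \to \cof[X,{\cof[A,B]}], \qquad \phi \mapsto \check\phi, \quad \check\phi(x)(a)=\phi(a)(x),
\]
which is a homeomorphism as soon as $A$ is compactly generated. Now $\Hom(A,\cof[X,B])$ is by definition the subspace of $\cof[A,{\cof[X,B]}]$ consisting of the continuous \starhoms, and $\cof[X,\Hom(A,B)]$ is the subspace of $\cof[X,{\cof[A,B]}]$ consisting of those maps that factor through $\Hom(A,B)\subset\cof[A,B]$; at this point I would record the elementary fact that for a subspace $W\subset Z$ the compact-open topology on $\cof[X,W]$ coincides with the subspace topology it carries as the set of maps $X\to Z$ landing in $W$, so that these identifications are genuinely topological.

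The one feature that makes everything work is that all algebraic operations on $\cof[X,B]$ are defined pointwise. Hence a continuous map $\phi\colon A\to\cof[X,B]$ is a \starhom\ if and only if every evaluation $\eval_x\circ\phi\colon A\to B$ is a \starhom, that is, if and only if $\check\phi(x)\in\Hom(A,B)$ for each $x\in X$, which is precisely the condition that $\check\phi$ lie in $\cof[X,\Hom(A,B)]$. Consequently the inclusion of Lemma~\ref{steenrods} restricts to a natural inclusion $\Hom(A,\cof[X,B])\to\cof[X,\Hom(A,B)]$, a restriction of a topological embedding to subspaces being again one. When $A$ is compactly generated the ambient map is a homeomorphism, and the same ``if and only if'' shows its restriction is onto $\cof[X,\Hom(A,B)]$: given $\psi\in\cof[X,\Hom(A,B)]$, its unique preimage $\phi$ satisfies $\eval_x\circ\phi=\psi(x)\in\Hom(A,B)$ for all $x$, and pointwiseness of the operations forces $\phi$ itself to be a \starhom. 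Thus the restricted map is a homeomorphism.

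For the ``in particular'' clause, I would note that any $A\in\calg$ carries a norm topology making it a metric space, hence first countable and therefore compactly generated, so the homeomorphism assertion applies. For the pointed statement I would simply restrict the map above once more, to the basepoint-preserving subspaces $\Hom(A,\ptcof{X,B})\subset\Hom(A,\cof[X,B])$ and $\ptcof{X,\Hom(A,B)}\subset\cof[X,\Hom(A,B)]$, observing that $\phi(a)$ sends the basepoint $x_0\in X$ to $0\in B$ for every $a$ exactly when $\check\phi(x_0)$ is the zero \starhom, the basepoint of $\Hom(A,B)$; so the basepoint condition on the left matches the basepoint condition on the right, and the pointed map inherits being an embedding, and a homeomorphism when $A$ is compactly generated. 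Since the substantive topology is already packaged in Lemma~\ref{steenrods}, I expect the only real obstacle here to be the bookkeeping: verifying that the restricted map surjects onto exactly $\cof[X,\Hom(A,B)]$ in the compactly generated case, and confirming that the various subspace topologies (for $\Hom$, for $\cof[X,-]$ of a subspace, and for the pointed subspaces) are mutually compatible so that ``inclusion'' and ``homeomorphism'' genuinely descend.
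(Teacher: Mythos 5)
Your argument is correct and follows essentially the same route as the paper: the paper likewise restricts the inclusion $\iota_X\circ(\iota_A)^{-1}$ of Lemma~\ref{steenrods} (with $Y=A$, $Z=B$) to the subspace $\Hom(A,\cof[X,B])\subset\cof[A,{\cof[X,B]}]$, checks it factors through $\cof[X,\Hom(A,B)]$, obtains surjectivity onto that subspace when $A$ is compactly generated, handles $A\in\calg$ via first countability, and treats the pointed case by a further restriction matching basepoint conditions. Your write-up merely makes explicit (via the pointwise-operations observation) what the paper leaves as ``one easily checks.''
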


\begin{proof}
  The statement for $\calgs$ follows from the general one because $A \in \calg$
  would imply $A$ is compactly generated as a normed, hence first
  countable, space (see \cite{Steenrod(1967)}).

  We use the notation of the previous proof and
  consider the case $X\in \cpt$ first.
  On the set-theoretic level, one easily checks that the restriction
  of $\iota_X \circ (\iota_A)^{-1}$ to the subspace
\[\Hom(A,\cof[X,B]) \subset \cof[A,\cof[X,B]]
\]
  factors through the subspace
\[\cof[X,\Hom(A,B)] \subset \cof[X,\cof[A,B]].
\]
  Hence we obtain a topological inclusion
\[\Hom(A,\cof[X,B]) \to \cof[X,\Hom(A,B)].
\]
  On the other hand, if $\iota_X \circ (\iota_A)^{-1}$ is surjective,
  it is also straightforward to check that the pre-image of
  $\cof[X,\Hom(A,B)]$ is precisely $\Hom(A,\cof[X,B])$, giving the
  expected homeomorphism.

  Now assume $X$ is pointed.
  Notice that $\ptcof{X,B} \subset \cof[X,B]$ consists of those maps
  which vanish on the basepoint $x_0 \in X$.  Thus, a $\starhom$
  $\psi:A \to \cof[X,B]$ factors through $\ptcof{X,B}$ precisely when
  $(\psi(a))(x_0)=0$ for each $a \in A$.  However, this is equivalent
  to saying that the adjoint map $X \to \Hom(A,B)$ sends the basepoint
  $x_0$ to the constant map on zero, which is the basepoint of
  $\Hom(A,B)$. Thus we obtain the pointed version from the
  unpointed statement simply by restriction. 
\end{proof}

\section{Universal objects defined through generators and relations}
\label{unnamedsec}

>From the Arens-Michael decomposition theorem, we see that
\lmccstar-algebras (resp.~$\nu$-complete \lmccstar-algebras)
are very closely related to inverse limits of \cstar-algebras, or pro-\calgs.
In \cite[1.3]{Phillips(1988b)} Phillips introduced
the concept of a weakly admissible set of generators and relations
and showed that a weakly admissible set of generators and relations
can be used to define a universal associated inverse limit of \cstar-algebras.
The corresponding concept works equally well for the category $\ourcat$. 
A pair $(G,R)$ of generators and relations in this context then simply consists
of a set $G$ and a collection $R$ of statements about the elements in $G$ which make sense for elements 
of a $\nu$-complete \lmccstar-algebra. A representation of $(G,R)$ in
a $\nu$-complete \lmccstar-algebra $A$ then consists of a set-valued map
$\varrho: G \to A$ such that the elements $\varrho(g)$ for $g\in G$
satisfy the relations $R$ in $A$.

\begin{defn}[{\cite[Definition 1.3.4]{Phillips(1988b)}}]
A set $(G,R)$ of generators and relations is called weakly admissible if
\begin{enumerate}
\item The zero map from $G$ to the zero \cstar-algebra is a
  representation of $(G,R)$.
\item If $\varrho$ is a representation of $(G,R)$ in a \cstar-algebra
  $A$, and $B$ is a 
\cstar-subalgebra of $A$ containing $\varrho(G)$, then $\varrho$ is a
  representation of $(G,R)$ in $B$.
\item If $\varrho $ is a representation of $(G,R)$ into 
      a $\nu$-complete \lmccstar-algebra $A$, and $\varphi:A \to B$ is 
      a continuous surjective \starhom\ onto a \cstar-algebra $B$,
      then $\varphi \circ \varrho$ is a representation of $(G,R)$ in $B$.
\item If $A$ is a $\nu$-complete \lmccstar-algebra, and $\varrho: G
  \to A$ is a function such that, for every $p\in S(A)$ the
  composition of $\varrho$ with the \starhom\ $A \to \overline{A_p}$
 (notation of Theorem \ref{Arens-Michael}) is a representation of
  $(G,R)$ in $\overline{A_p}$, 
  then $\varrho$ is a representation of $(G,R)$ in $A$.
\item If $\varrho_1, ..., \varrho_n$ are representations of $(G,R)$ in
      \cstar-algebras $A_1,...,A_n$, 
      then $g \mapsto (\varrho_1(g),...,\varrho_n(g))$ is a
      representation of $(G,R)$ in $A_1 \times ... \times A_n$.
\end{enumerate}
\end{defn}

\begin{prop}[{\cite[Proposition 1.3.6]{Phillips(1988b)}}]
\label{define_through_G_and R}
Let $(G,R)$ be a weakly admissible set of generators and
relations. Then there exists
a $\nu$-complete \lmccstar-algebra $\ourcat(G,R)$, equipped with a representation
$\varrho: G \to \ourcat(G,R)$
of $(G,R)$, such that, for any representation $\varsigma$ of $(G,R)$
into a $\nu$-complete \lmccstar-algebra $B$, there is 
a unique continuous \starhom\  $\varphi: \ourcat(G,R) \to B$
satisfying $\varsigma= \varphi \circ \varrho$.
\end{prop}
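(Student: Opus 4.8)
The plan is to reduce to the case of \calgs\ via Theorem \ref{Arens-Michael} and condition (4), and to assemble $\ourcat(G,R)$ out of all representations of $(G,R)$ into \calgs, cut down to a set by a cardinality estimate. By condition (2) any representation into a \calgs[ ]$A$ restricts to one into the \cstar-subalgebra generated by its image, so it suffices to treat pairs $(A,\varrho_A)$ with $A$ a \calgs[ ]generated by $\varrho_A(G)$. Such an $A$ contains a dense $*$-subalgebra of cardinality at most $\max(|G|,\aleph_0)$ over a countable dense subfield of $\C$, so $|A|$ is bounded by a single cardinal $\lambda=\lambda(|G|)$; hence the isomorphism classes of such pairs form a set, and I fix a set of representatives $\{(A_i,\varrho_i)\}_{i\in I}$.

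Next I form the product $P=\prod_{i\in I}A_i$ in $\ourcat$, which exists by Lemma \ref{limits}, together with the tautological map $\varrho\colon G\to P$, $g\mapsto(\varrho_i(g))_i$, and I define $\ourcat(G,R)$ to be the smallest $\nu$-complete \lmccstar-subalgebra of $P$ containing $\varrho(G)$, namely the $\nu$-completion of the algebraic $*$-subalgebra $D_0$ generated by $\varrho(G)$. To see that $\varrho$ is a representation I invoke condition (4): each $p\in S(P)$ is dominated by a finite maximum of the coordinate seminorms $p_i=\|\pi_i(\cdot)\|$, so $\overline{P_p}$ is a quotient \calgs[ ]of a finite product $\prod_{i\in F}A_i$; the diagonal $g\mapsto(\varrho_i(g))_{i\in F}$ is a representation by condition (5), and composing with the surjection onto $\overline{P_p}$ gives a representation by condition (3). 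Thus every $G\to\overline{P_p}$ is a representation, so condition (4) shows $\varrho$ is one in $P$; the same computation, using condition (2) to pass to generated subalgebras, shows it remains a representation in $\ourcat(G,R)$.

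It remains to check the universal property. Uniqueness is clear, since $\varrho(G)$ generates $\ourcat(G,R)$ and a continuous \starhom\ is determined by its restriction to the dense subalgebra $D_0$. For existence, given a representation $\varsigma\colon G\to B$ with $B\in\ourcat$, I define $\varphi$ on $D_0$ by sending a word in the $\varrho(g)$ to the same word in the $\varsigma(g)\in B$, and then extend over the $\nu$-completion by Lemma \ref{completion_is_leftadjoint}, using that $B$ is $\nu$-complete.

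The hard part will be showing that this $\varphi$ is well defined and continuous, and both reduce to the single claim that for every $q\in S(B)$ the composite $\varsigma_q\colon G\xrightarrow{\varsigma}B\to\overline{B_q}$ is again a representation of $(G,R)$. Granting it, the \cstar-subalgebra $C_q\subseteq\overline{B_q}$ generated by $\varsigma_q(G)$ is isomorphic to one of the representatives $(A_{i(q)},\varrho_{i(q)})$, so the coordinate projection $P\to A_{i(q)}\ouriso C_q\subseteq\overline{B_q}$ carries a word in the $\varrho(g)$ to the corresponding word in the $\varsigma_q(g)$; this simultaneously forces $\varphi$ to be well defined (a word vanishing in every $A_i$ vanishes in every $\overline{B_q}$, hence in $B$) and identifies $q\circ\varphi$ with the restriction of $p_{i(q)}$, so $\varphi$ is continuous. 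I expect the genuine obstacle to be exactly this claim: the canonical maps $B\to\overline{B_q}$ are in general only dense, not surjective, so condition (3) does not apply to them directly, and pushing a representation in the merely $\nu$-complete algebra $B$ forward to its quotient \calgs\ requires combining the weak-admissibility axioms with the Arens-Michael description of $B$ and the $\nu$-completeness hypothesis, which is precisely the interplay that makes $\nu$-completeness rather than completeness delicate here.
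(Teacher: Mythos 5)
Your construction is, up to repackaging, the one the paper relies on: the paper's proof simply cites Phillips and replaces his Hausdorff completion of $F(G)_D$ (the free $*$-algebra topologized by the set $D$ of \cstar-seminorms arising from representations into \calgs) with the Hausdorff $\nu$-completion. Your $D_0$, the $*$-subalgebra of $P=\prod_{i}A_i$ generated by $\varrho(G)$, is canonically isomorphic as a topological $*$-algebra to $F(G)_D$: a seminorm in $D$ is exactly the pullback of the norm of one of your representatives $(A_i,\varrho_i)$, and the subspace topology on $D_0$ is the one generated by those pullbacks. So the solution-set-plus-product presentation buys readability but nothing mathematically new. Your cardinality bound, the use of conditions (2), (3), (5) feeding into (4) to see that $\varrho$ is a representation (including the reduction of an arbitrary $p\in S(P)$ to a finite maximum of coordinate seminorms, using that domination of \cstar-seminorms up to a constant implies domination), and the reduction of well-definedness and continuity of $\varphi$ to the behaviour of $\varsigma$ under $B\to\overline{B_q}$ all track Phillips' argument correctly.

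The one genuine gap is the claim you flag and do not prove: that for $B\in\ourcat$ and $q\in S(B)$ the composite $G\to B\to\overline{B_q}$ is again a representation. Without it you have neither well-definedness nor continuity of $\varphi$, so as written the proof is incomplete at exactly its crux. The resolution, however, is not the delicate axiom-chasing you anticipate: for a $\nu$-complete (in particular sequentially complete) \lmccstar-algebra $B$, the canonical map $\pi_q:B\to\overline{B_q}$ is already \emph{surjective}, i.e.\ $B_q$ is complete. This is a standard result of Phillips on pro-\calgs, and its proof uses only sequential completeness: write $b\in\overline{B_q}$ as $\pi_q(a_1)+\sum_n\pi_q(c_n)$ with $q(c_n)<2^{-n}$, and replace each $c_n$ by $f_{2^{-n}}(c_n)$ (bounded continuous functional calculus applied to real and imaginary parts, with $f_\epsilon(t)=t$ for $|t|\le\epsilon$ and $|f_\epsilon|\le\epsilon$ everywhere), which leaves the image in $\overline{B_q}$ unchanged while making the series Cauchy for \emph{every} continuous \cstar-seminorm, hence convergent in $B$. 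With surjectivity in hand, condition (3) applies verbatim to $\pi_q$ and yields your claim; this is also the honest place where $\nu$-completeness (as opposed to no completeness hypothesis at all) enters the proposition. You should either cite this surjectivity result or include the short argument; everything else in your write-up then closes up.
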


\begin{proof}
The proof carries over verbatim from the one given in \cite{Phillips(1988b)},
except that the \lmccstar-algebra $\ourcat(G,R)$ has to be defined
as the Hausdorff $\nu$-completion $F(G)_D$ of the free associative complex
$*$-algebra $F(G)$ generated by $G$ with respect 
to the set of seminorms $D$ (rather than being its Hausdorff completion).
\end{proof}

\begin{examples}\label{univ_examples}\
\begin{enumerate}
\item
Let $G$ be a set of generators and let $F(G)$ denote the free
associative complex \staralgs[] on the set $G$. The requirement
that a set map
$X \to F(G)$ from a topological space $X$ into  
$F(G)$ is continuous defines a set of weakly admissible relations.
(cf.~{\cite[Example 1.3.5 (4)]{Phillips(1988b)}})
\item
Given a $\nu$-complete \lmccstar-algebra $A$, define $G$ to be the set
$A$ and define the set of relations to be the set of algebraic
relations between the elements of $A$ together with the
relations that imply the obvious composite map $A \to G \subset F(G)$ is
continuous.
Then $(G,R)$ is weakly admissible and  $\ourcat(G,R) \ouriso A$.
(cf.~{\cite[1.3.7]{Phillips(1988b)}})
\item \label{free_product_in_G_and R}More generally, given a set
  $\{A_i\}_{i\in I}$ of $\nu$-complete \lmccstar-algebras 
take the union of the elements in the
$A_i$ as a set of generators $G$ and let $R$ be the set of all the
  algebraic relations in the individual 
$A_i$ together with the
relations which imply that each of the maps $A_i \to G$ is
  continuous.
This yields a set of weakly admissible relations and
$\ourcat(G,R)$ is a model for the coproduct of the $A_i$ in
  $\ourcat$.
\item  \label{require_continuity} Given a $\nu$-complete
  \lmccstar-algebra $A$, a topological space $X$ and a
set map $X \to A$ one can define a weakly admissible set of relations
  with $G=A$
and the set of relations $R$ given by all the algebraic relations of
  $A$ together with 
the relations which imply the two maps $A \to G \subset F(G)$ and $X
  \to G \subset F(G)$ to be continuous.
(This construction will be used in \ref{defining_atimes}.)
\end{enumerate}
\end{examples}

\section{Partial Adjoint Pairs}
\label{partialadjoint}

Ideally we would like to have a simple
and easy to work with
construction for a left adjoint to the functor
\[\righta=\Hom(A,?):\ourcat \to \pttop
\]
associated to any $A \in \ourcat$.  
However, technically it is easier
to deal with functors with the
following much weaker property, which suffices for our purposes.

\begin{defn}
  Suppose $\lefta:\ptcpt \to \ourcat$ is a functor together with a
  bijection
\[\Hom(\lefta(X),B) \to \ptcof{X,\righta(B)}
\]
  natural in both $X \in \ptcpt$ and $B \in \ourcat$.  Then $\lefta$
  will be referred to as a partial left adjoint to $\righta$.
\end{defn}

We now have the following relevant construction, which 
related to that of Phillips in \cite[on page 180]{Phillips(1988b)}.

\begin{construct} \label{defining_atimes}
  Suppose $A \in \ourcat$ and $X \in \cpt$.  For $x \in X$, let $A_x$
  denote a copy of $A$. By \ref{univ_examples} \eqref{free_product_in_G_and R} 
  we can obtain a model for the free product
  $\ast_{x \in X} A_x$ in $\ourcat$ in terms of generators and relations. 
  And by \ref{univ_examples} \eqref{require_continuity}
  we can put further relations
  on the free product which imply the natural inclusion
  $A \times X \to \ast_{x \in X} A_x$,
  given by the isomorphisms
  $A \times \{x\} \to A_x$, is continuous. 
  Let us denote the resulting object by $A \atimes X$. 

  By construction, $A \atimes X$
  comes equipped with a natural continuous map
\[i:A \times X \to A \atimes X.
\]
  In particular, for each $a \in A$ we obtain a continuous map
  $i_a:X \to A \atimes X$ by
\[X \cong \{a\} \times X \to A \times X \to A \atimes X
\]
  and similarly we obtain a continuous map $i_x:A \to A \atimes X$
  for each $x\in X$.

  If $x_0 \in X$ is a basepoint for $X$, then we can modify the
  previous construction to yield an \lmccstar-algebra $A \ptatimes X \in \ourcat$ and
  a corresponding continuous map
\[j:A \Smash X \to A \ptatimes X
\]
  simply by removing $x_0$ from the indexing set of the free product
  and requiring the canonical map $A \Smash X \to \ast_{x \in X \backslash \{x_0\}} A_x$
  to be continuous. As above, we then obtain continuous maps $j_a: X \to A \ptatimes X$ for
  each $a\in A$ and $j_x: A \to A \ptatimes X$ for each $x\in X$, with the maps $j_0$ and $j_{x_0}$
  mapping constantly to $0 \in A \ptatimes X$.
\end{construct}

\begin{prop}
  \label{halfadjoint}
  Let $A$,$B \in \ourcat$ and $X \in \cpt$.  Then there is a continuous
  bijection
\begin{equation}
\label{bijection_halfadjoint}\Hom(A \atimes X,B) \to \Hom(A,\cof[X,B])
\end{equation}
  natural in $A$,$B$ and $X$.
\end{prop}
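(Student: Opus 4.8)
The plan is to recognize both sides as spaces of maps out of the product $A \times X$ and then transport across the exponential law established inside Lemma \ref{steenrods}. First I would exploit the universal property of $A \atimes X$. By Construction \ref{defining_atimes}, which builds $A \atimes X$ from the generators-and-relations descriptions of Examples \ref{univ_examples} \eqref{free_product_in_G_and R} and \eqref{require_continuity}, Proposition \ref{define_through_G_and R} identifies a continuous $\starhom$ $\varphi: A \atimes X \to B$ with a representation of the defining data, i.e.\ with a family of $\starhoms$ $\psi_x: A \to B$ $(x \in X)$ for which $(a,x) \mapsto \psi_x(a)$ is continuous on $A \times X$. Writing $i: A \times X \to A \atimes X$ for the canonical map of Construction \ref{defining_atimes}, such a $\varphi$ is determined by $g := \varphi \circ i$, and $\varphi \mapsto g$ is a bijection onto the set
\[ \mathcal M = \{\, g \in \cof[A \times X, B] : g(-,x) \text{ is a } \starhom \text{ for each } x \in X \,\}. \]
Since $i$ is continuous, precomposition $i^{*}$ is continuous for the compact--open topologies, so $\varphi \mapsto g$ is a continuous bijection $\Hom(A \atimes X, B) \to \mathcal M$.

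Next I would invoke the exponential correspondence; recall $\cof[X,B] \in \ourcat$ by Lemma \ref{cin}. Taking $Y = A$ and $Z = B$ in the proof of Lemma \ref{steenrods} (both are Hausdorff as objects of $\ourcat$, and $X$ is compact), the exponential map
\[ \iota_A: \cof[A \times X, B] \to \cof[A, \cof[X, B]], \qquad \iota_A(g)(a)(x) = g(a,x), \]
is a topological inclusion which is moreover surjective, hence a homeomorphism. I would then check that $\iota_A$ restricts to a bijection $\mathcal M \to \Hom(A, \cof[X, B])$: because the operations on $\cof[X, B]$ are pointwise, the adjoint $\iota_A(g)$ is a $\starhom$ precisely when each $g(-,x) = \operatorname{ev}_x \circ \iota_A(g)$ is a $\starhom$, that is, precisely when $g \in \mathcal M$; and surjectivity of $\iota_A$ guarantees every continuous $\starhom$ $A \to \cof[X, B]$ is hit.

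Composing the two bijections yields the desired map $\Theta = \iota_A \circ i^{*}$, with $\Theta(\varphi)(a)(x) = \varphi(i(a,x))$. It is a continuous bijection, being the composite of the continuous bijection $i^{*}$ with the restriction to $\mathcal M$ of the homeomorphism $\iota_A$; naturality in $A$, $B$ and $X$ is inherited from that of $i$, of precomposition, and of the exponential map. \textbf{The one genuinely delicate point} is the surjectivity of $\iota_A$, which is exactly where compactness of $X$ enters (via \cite[3.4.7]{Steenrod(1967)}, as recorded in the proof of Lemma \ref{steenrods}): it asserts that a continuous $\starhom$ into $\cof[X, B]$ has jointly continuous adjoint on $A \times X$, so that nothing in the target is missed. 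Everything else is formal --- the universal-property identification of the source and the pointwise matching of the two $\starhom$ conditions --- and I do not expect the continuity of $\Theta$ to be an issue, since it factors through manifestly continuous maps and we are only asked for a continuous bijection rather than a homeomorphism, so no control over $\Theta^{-1}$ is needed.
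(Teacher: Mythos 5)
Your proof is correct and follows essentially the same route as the paper: it identifies $\Hom(A\atimes X,B)$ with the set of representations of the defining generators and relations via Proposition \ref{define_through_G_and R}, and then uses the surjectivity of $\iota_A$ (from compactness of $X$, i.e.\ Lemma \ref{steenrods} and \cite[3.4.7]{Steenrod(1967)}) to match these with elements of $\Hom(A,\cof[X,B])$. The only cosmetic difference is that you make the intermediate set $\mathcal M$ explicit, whereas the paper verifies surjectivity and injectivity of the composite $\iota_A\circ i^*$ directly from the existence and uniqueness clauses of the universal property.
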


\begin{proof}
  There is a continuous composition of
\[i^*:\Hom(A \atimes X,B) \to \cof[A \times X,B]
\]
  and the map
\[\iota_A:\cof[A \times X,B] \to \cof[A,\cof[X,B]]
\]
  which restricts to give the continuous map
\[\Hom(A \atimes X,B) \to \Hom(A,\cof[X,B])
\]
  in the statement.  Clearly, this map is natural in all variables, so
  it suffices to verify this is a bijection.

  In order to establish surjectivity, let $\psi$ be an arbitrary $\starhom$ in
  $\Hom(A,\cof[X,B])$.  As in the proof of Proposition \ref{cadjoint}, the
  map $\iota_A$ is a homeomorphism since $X$ is compact.  Let $f$
  denote the continuous map $f:A \times X \to B$ with
  $\iota_A(f)=\psi$ so $(\psi(a))(x)=f(a,x)$.  Then $f$ is a representation
  of the set of generators and relations defining $A \atimes X$.
  So, by Proposition \ref{define_through_G_and R}, we get a $\starhom$
  $g: A \atimes X \to B$ which then maps to $\psi$ by construction.

  In order to establish injectivity, note that two
  $\starhoms$ $g_1,g_2: A \atimes X \to B$ mapping to
  a $\starhom$ $\psi: A \to \cof[X,B]$ restrict to the same representation
  $f: A \times X \to B$, so we obtain $g_1=g_2$ by the uniqueness assertion 
  of Proposition \ref{define_through_G_and R}.
\end{proof}

\begin{prop}
  \label{almostadjoint}
  Let $A$, $B \in \ourcat$ and $X \in \ptcpt$.  Then there is a continuous
  bijection
\begin{equation}
\label{bijection_almostadjoint}
\Hom(A \ptatimes X,B) \to \Hom(A,\ptcof{X,B})
\end{equation}
  natural in $A$,$B$ and $X$.
\end{prop}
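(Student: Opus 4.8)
The plan is to deduce this pointed statement from the unpointed bijection of Proposition~\ref{halfadjoint} by restriction, exactly as the pointed case of Proposition~\ref{cadjoint} was obtained from its unpointed counterpart. The mechanism is a natural ``collapse'' $\starhom$ $c\colon A\atimes X\to A\ptatimes X$: by Construction~\ref{defining_atimes}, the assignment $(a,x)\mapsto j_x(a)$ for $x\neq x_0$ and $(a,x_0)\mapsto 0$ factors as the quotient $A\times X\to A\Smash X$ followed by $j$, so it is continuous and a $\starhom$ in the first variable for each fixed $x$; hence Proposition~\ref{define_through_G_and R} produces $c$ with $c\circ i_x=j_x$ for $x\neq x_0$ and $c\circ i_{x_0}=0$.

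First I would compare the two mapping-space targets. As observed in the proof of Proposition~\ref{cadjoint}, $\ptcof{X,B}\subset\cof[X,B]$ is the closed $\nu$-complete subalgebra of maps vanishing at $x_0$; this inclusion is a topological embedding, so $\Hom(A,\ptcof{X,B})$ carries the subspace topology from $\Hom(A,\cof[X,B])$ and consists exactly of those $\psi$ with $(\psi(a))(x_0)=0$ for every $a\in A$.

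Next comes the essential step, comparing the two algebra sources. Precomposition with $c$ gives a continuous injection $c^{*}\colon\Hom(A\ptatimes X,B)\to\Hom(A\atimes X,B)$ whose image is contained in $\{\,g : g\circ i_{x_0}=0\,\}$, since $c\circ i_{x_0}=0$. Conversely, if $g\colon A\atimes X\to B$ satisfies $g\circ i_{x_0}=0$, then its associated representation $f\colon A\times X\to B$, given by $f(a,x)=(g\circ i_x)(a)$, vanishes on $A\times\{x_0\}$, and automatically on $\{0\}\times X$, so it descends to a pointed continuous map $A\Smash X\to B$. This is precisely a representation of the generators and relations defining $A\ptatimes X$, so Proposition~\ref{define_through_G_and R} yields a unique $h$ with $c^{*}(h)=g$; thus $c^{*}$ is a continuous bijection onto $\{\,g : g\circ i_{x_0}=0\,\}$.

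Finally I would assemble the pieces. Under the bijection of Proposition~\ref{halfadjoint} one has $(\psi(a))(x)=f(a,x)$, so the condition $g\circ i_{x_0}=0$ is identical to $(\psi(a))(x_0)=0$ for all $a$. Consequently the composite
\[\Hom(A\ptatimes X,B)\to\Hom(A\atimes X,B)\to\Hom(A,\cof[X,B]),\]
with first map $c^{*}$ and second map the bijection of~\ref{halfadjoint}, is a continuous injection with image exactly $\Hom(A,\ptcof{X,B})$; corestricting to that subspace gives the desired continuous bijection~\eqref{bijection_almostadjoint}, and naturality in $A$, $B$, and $X$ is inherited from that of $c$ and of Proposition~\ref{halfadjoint}. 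The step I expect to be the main obstacle is the algebra comparison: one must verify carefully that the generators-and-relations data defining $A\ptatimes X$ matches exactly those representations of $A\atimes X$ that kill the $x_0$-copy, which reduces to translating cleanly between continuous pointed maps out of the smash $A\Smash X$ and continuous maps on $A\times X$ vanishing on the wedge $A\vee X$. The remaining verifications are routine.
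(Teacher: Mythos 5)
Your proof is correct, but it takes a genuinely different route from the paper's. The paper proves Proposition \ref{almostadjoint} by directly rerunning the argument of Proposition \ref{halfadjoint} in the pointed setting: it composes $j^*:\Hom(A \ptatimes X,B) \to \ptcof{A \Smash X,B}$ with the pointed exponential-law inclusion $\iota_A:\ptcof{A \Smash X,B} \to \ptcof{A,\ptcof{X,B}}$, and then establishes bijectivity exactly as before, by viewing a $\starhom$ $A \to \ptcof{X,B}$ as a pointed representation $A \Smash X \to B$ of the generators and relations defining $A \ptatimes X$ and invoking Proposition \ref{define_through_G_and R}. You instead reduce the pointed statement to the already-proved unpointed one: you build a comparison $\starhom$ $c:A \atimes X \to A \ptatimes X$ from the universal property, identify $\Hom(A \ptatimes X,B)$ via $c^*$ with the subspace $\{g : g\circ i_{x_0}=0\}$ of $\Hom(A \atimes X,B)$, identify $\Hom(A,\ptcof{X,B})$ with the subspace $\{\psi : (\psi(a))(x_0)=0\}$ of $\Hom(A,\cof[X,B])$, and check that the unpointed bijection matches these subspaces. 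All the steps check out: $c$ exists and satisfies $c\circ i_x=j_x$, $c\circ i_{x_0}=0$ by Proposition \ref{define_through_G_and R}; the image characterization of $c^*$ follows because a representation of $A\atimes X$ killing the $x_0$-copy (and automatically $\{0\}\times X$) descends through the quotient $A\times X \to A\Smash X$ to a representation of $A\ptatimes X$, and uniqueness gives both $h\circ c=g$ and injectivity of $c^*$; and one can trace that the resulting bijection agrees with the paper's. The paper's route is shorter on the page but implicitly relies on the pointed analogue of Lemma \ref{steenrods} for smash products; your route avoids that by leaning entirely on the unpointed Proposition \ref{halfadjoint}, at the cost of the extra verification that morphisms out of $A\ptatimes X$ correspond exactly to morphisms out of $A\atimes X$ annihilating the $x_0$-copy. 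Your approach also has the merit of being structurally parallel to how the paper itself deduces the pointed cases of Proposition \ref{cadjoint} and Lemma \ref{take_tensor_K_out_of_cof} from their unpointed versions.
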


\begin{proof}
 There is a continuous composition of
\[j^*:\Hom(A \ptatimes X,B) \to \ptcof{A \Smash X,B}
\]
  and the map
\[\iota_A:\ptcof{A \Smash X,B} \to \ptcof{A,\ptcof{X,B}}
\]
  which restricts to give the continuous map
\[\Hom(A \ptatimes X,B) \to \Hom(A,\ptcof{X,B})
\]
  in the statement.  Clearly, this map is natural in all variables, so
  it suffices to verify this is a bijection, which follows as in the
  proof of Proposition \ref{halfadjoint}.
\end{proof}

In the next section, we will have a mild extension of the following
result which produces partial adjoint pairs.

\begin{prop}
  \label{partadjoint}
  For a compactly generated $A \in \ourcat$, the functor
\[A \ptatimes ?:\ptcpt \to \ourcat
\]
  is a partial left adjoint to the functor
\[\Hom(A,?):\ourcat \to \pttop.
\]
\end{prop}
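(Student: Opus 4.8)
The plan is to produce the required natural bijection by composing the two bijections already established, one in this section and one in Section \ref{mapping}. With $\lefta = A \ptatimes ?$ and $\righta = \Hom(A,?)$, the definition of a partial left adjoint asks for a bijection
\[
\Hom(A \ptatimes X, B) \to \ptcof{X, \Hom(A,B)}
\]
natural in both $X \in \ptcpt$ and $B \in \ourcat$. Both the map and its naturality will be assembled from existing results, so the argument is short.

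First I would invoke Proposition \ref{almostadjoint}, which supplies a continuous bijection $\Hom(A \ptatimes X, B) \to \Hom(A, \ptcof{X,B})$ natural in $A$, $B$, and $X$. Next I would apply the pointed case of Proposition \ref{cadjoint}, namely the map \eqref{bijection_cadjoint_pointed}, giving a natural inclusion $\Hom(A, \ptcof{X,B}) \to \ptcof{X, \Hom(A,B)}$. This is the one place the hypothesis enters: since $A$ is assumed compactly generated, Proposition \ref{cadjoint} upgrades this inclusion to a homeomorphism, and in particular to a bijection. Composing the two yields the desired map $\Hom(A \ptatimes X, B) \to \ptcof{X, \Hom(A,B)}$, which is a bijection as a composite of bijections. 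Naturality in $X$ and $B$ is inherited automatically, since each constituent is natural in both variables, and that is all the definition of partial left adjoint requires.

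I do not expect any serious obstacle, as the substantive work was front-loaded into Propositions \ref{almostadjoint} and \ref{cadjoint}. The only points deserving a word of care are to confirm that the compact generation hypothesis on $A$ is used exactly once — precisely in promoting the inclusion of Proposition \ref{cadjoint} to a bijection — and to note that, although Proposition \ref{almostadjoint} only guarantees a continuous bijection rather than a homeomorphism, this is immaterial here since the definition of a partial left adjoint demands only a natural bijection of underlying sets.
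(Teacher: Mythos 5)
Your proposal is correct and follows exactly the paper's own argument: compose the continuous bijection of Proposition \ref{almostadjoint} with the pointed map \eqref{bijection_cadjoint_pointed} of Proposition \ref{cadjoint}, which becomes a bijection precisely because $A$ is compactly generated. Your closing remarks about where the hypothesis is used and why a continuous bijection suffices are accurate and match the paper's intent.
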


\begin{proof}
  The statement is equivalent to the existence of a bijection
\[\Hom(A \ptatimes X,B) \to \ptcof{X,\Hom(A,B)}
\]
  natural in $X \in \ptcpt$ and $B \in \ourcat$.
  However, Proposition \ref{almostadjoint} yields a (continuous) bijection
\[\Hom(A \ptatimes X,B) \to \Hom(A,\ptcof{X,B})
\]
  natural in $X$ and $B$.
  For compactly generated $A$, Proposition \ref{cadjoint} yields a
  bijection (actually, a homeomorphism) natural in $X$ and $B$
\[\Hom(A,\ptcof{X,B}) \to \ptcof{X,\Hom(A,B)}.
\]
\end{proof}

\section{The adjoint of tensoring with the compact operators}
\label{moreadjoint}

>From the work of Phillips
(\cite[Proposition 5.8 on page 1084]{Phillips(1989)}),
we know that in the category of
inverse limits of \calgs\ there is an adjoint
to tensoring an object with the compact operators $\compacts$.
The corresponding statement also holds in the category $\ourcat$.
As with Phillips' construction, the adjoint can be obtained by an application
of Freyd's Adjoint Functor Theorem, but we instead give a constructive
proof.

Before we present the construction, we first have
the following lemma which verifies a necessary condition for
the existence of an adjoint to tensoring with  $\compacts$.

\begin{lemma}
Taking the tensor product with the compact operators in the category
$\ourcat$ commutes with
inverse limits $B = \lim_\beta B_\beta$ in $\ourcat$, i.e.~the
canonical map
\[ (\lim_\beta B_\beta) \tensor \compacts \stackrel{\ouriso}{\to}
lim_\beta (B_\beta \tensor \compacts)\]
is an isomorphism in $\ourcat$.
\end{lemma}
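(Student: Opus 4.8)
The plan is to show the canonical map $\Phi$ is an isomorphism by realizing both sides as ``$\compacts$-matrices with entries in $B$'' and checking they have the same elements and the same defining seminorms. I would lean on the matrix-entry description used in the proof of Lemma~\ref{take_tensor_K_out_of_cof}: after fixing the matrix units $e_{ij}$ of $\compacts$, every element $d$ of any $D\tensor\compacts$ has a unique expansion $d=\sum_{ij}x^D_{ij}(d)\tensor e_{ij}$ whose square truncations $\sum_{i,j\le N}x^D_{ij}(d)\tensor e_{ij}$ converge to $d$, the coordinate maps $x^D_{ij}$ being natural in $D$. Two reductions from the nature of $\compacts$ streamline everything. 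Since $\compacts$ is simple its only continuous C*-seminorms are $0$ and the norm, so by Definition~\ref{tensor_product} the defining seminorms of any $D\tensor\compacts$ are exactly $\{p\,\|\cdot\| : p\in S(D)\}$, with C*-completion $\overline{D_p}\tensor\compacts$ (and, $\compacts$ being nuclear, there is no minimal/maximal ambiguity). Moreover, by naturality of the $x^D_{ij}$, a compatible family $(c_\beta)\in\lim_\beta(B_\beta\tensor\compacts)$ has matrix entries $(x^{B_\beta}_{ij}(c_\beta))_\beta$ which are themselves compatible, hence define elements $x_{ij}\in\lim_\beta B_\beta = B$. Injectivity of $\Phi$ is then immediate: if $\Phi(b)=0$ every projection annihilates $b$, so each entry $x^B_{ij}(b)$ dies in every $B_\beta$ and hence in $B\subset\prod_\beta B_\beta$, forcing $b=0$.

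The heart of the argument is surjectivity: given $(c_\beta)$ I must show the formal series $\sum_{ij}x_{ij}\tensor e_{ij}$ converges in $B\tensor\compacts$ to a preimage. It suffices to prove the partial sums $s_N=\sum_{i,j\le N}x_{ij}\tensor e_{ij}$ are Cauchy for each seminorm $p\,\|\cdot\|$, $p\in S(B)$; since $(s_N)$ is a countable sequence and $\nu\ge\aleph_0$, the $\nu$-completeness of $B\tensor\compacts$ then delivers the limit, whose $\beta$-th projection is $c_\beta$ by construction. To control $p\,\|\cdot\|$ I would use that $B=\lim_\beta B_\beta$ carries the subspace topology from $\prod_\beta B_\beta$, so any continuous C*-seminorm $p$ is dominated by a finite maximum $\max_{\beta\in F}(q_\beta\circ\pi_\beta)$; the standard fact that a C*-seminorm dominated by a multiple of another is dominated by it outright then upgrades this to $p\le\max_{\beta\in F}(q_\beta\circ\pi_\beta)$. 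Consequently $\overline{B_p}$ is a quotient of a sub-C*-algebra of the finite product $\prod_{\beta\in F}\overline{(B_\beta)_{q_\beta}}$, and since $-\tensor\compacts$ commutes with finite products the genuine elements $c_\beta$ ($\beta\in F$) assemble and push down to an honest element of $\overline{B_p}\tensor\compacts$ with matrix entries $\pi_p(x_{ij})$. The square truncations of that element converge in norm, which is precisely the Cauchy condition for $(s_N)$ under $p\,\|\cdot\|$.

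Finally, to promote the continuous bijection $\Phi$ to an isomorphism in $\ourcat$, I would match seminorm systems: pulling a defining seminorm $(q\,\|\cdot\|)\circ(\pi_\beta\tensor\mathrm{id})$ of the right-hand side back along $\Phi$ returns exactly $(q\circ\pi_\beta)\,\|\cdot\|$ on the left, and the finite-stage domination above shows these pulled-back seminorms generate the same topology as $\{p\,\|\cdot\| : p\in S(B)\}$; hence $\Phi^{-1}$ is continuous and $\Phi$ is an isomorphism. I expect the genuine obstacle to be the convergence step of the previous paragraph --- reducing an arbitrary continuous C*-seminorm on the limit to the finitely many stages where the $c_\beta$ actually live, and then invoking $\nu$-completeness with $\nu$ at least countable to sum the infinite matrix. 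This is the same phenomenon already flagged in the remark following Lemma~\ref{take_tensor_K_out_of_cof}.
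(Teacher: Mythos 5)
Your proposal is correct and follows essentially the same route as the paper: both identify elements of either side by their matrix entries $x_{ij}$ and exhibit the preimage of a compatible family $(c_\beta)$ as the $\nu$-convergent sum $\sum_{ij} x_{ij}\tensor e_{ij}$, using that the topology of $\lim_\beta B_\beta$ is generated by the pulled-back seminorms $pr_\beta^*\,p$. If anything, you supply more detail than the paper at the one genuinely delicate point --- verifying that the square truncations are Cauchy for each seminorm $p\,\|\cdot\|$ by dominating $p$ by finitely many stage seminorms --- where the paper simply asserts convergence from sequential completeness.
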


\begin{proof}
Using the Arens-Michael Decomposition Theorem \ref{Arens-Michael}
and the definition of the tensor product we obtain a commutative diagram
\[
\xymatrix{ 
(\lim_\beta B_\beta )\tensor \compacts \ar[r] \ar[d]& \lim_\beta (B_\beta \tensor \compacts) \ar[d]\\
\prod\limits_{q\in S(\lim_\beta B_\beta)} \overline{(\lim_\beta B_\beta)}_q \otimes \compacts \ar[r]& 
\prod\limits_\beta \prod\limits_{p\in S(B_\beta)} \overline{(B_\beta)}_p \otimes \compacts,
}
\]
where the vertical maps are inclusions. Now let $pr_{\alpha}: \lim_\beta B_\beta\to B_{\alpha}$
for an index $\alpha$ denote the canonical projection. The set $S=\bigcup_{\beta} \{ pr_\beta^* p\ |\ p\in S(B_\beta)\}$
of \cstar-seminorms on $\lim_\beta B_\beta$ then determines the topology of $\lim_\beta B_\beta$, 
which implies that the bottom horizontal arrow is an inclusion. 
Consequently so is the horizontal arrow on the top, which is the map of the lemma.
Thus, it remains to see that the \starhom\ is surjective.

For pairs of natural numbers $(i,j)$, let $x^B_{ij}: B \tensor
\compacts \to B$ be defined as in the proof of Lemma
\ref{take_tensor_K_out_of_cof}.
The \starhom\ of the lemma then fits into a commutative diagram
\[
\xymatrix{
(lim_\beta B_\beta) \tensor \compacts \ar[rr] \ar[dr]_{\prod
    x^{\lim_\beta B_\beta}_{ij}} & & lim_\beta (B_\beta \tensor
  \compacts) \ar[dl]^{\prod lim_\beta x^{B_\beta}_{ij}}\\
& \prod_{i,j} \lim_\beta B_\beta
}
\]
Given any element $b=(b_\beta)_{\beta}\in lim_\beta (B_\beta \tensor
\compacts)$, the element
$\sum_{i,j} x^{B_\beta}_{ij}(b_\beta) \tensor e_{ij}$ is contained in
$(\lim_\beta B_\beta) \tensor \compacts$
and provides a pre-image of $b$ (establishing surjectivity). Note that we have used that
$(\lim_\beta B_\beta) \tensor \compacts$
is sequentially complete, since the infinite sum above is not
contained in the algebraic tensor product.
\end{proof}

\begin{construct}
Suppose $A \in \ourcat$. For any
\lmccstar-algebra $Z$ and any \starhom\ 
$f: A \to Z \tensor \compacts$, define $Z_f \subset Z$ to be the 
topological \substaralg\ generated by the set
$\{f(a)_{ij}\ |\ i,j\in \N, a\in A\}$, 
whose elements are given by $f(a)_{ij}= x^{Z}_{ij}(f(a))$ (notation
from the previous proof).
Let $D$ be the set of all isomorphism classes of pairs
$(Z,f)$ where $Z$ is an \lmccstar-algebra and $f$ is a \starhom\
$f: A \to Z \tensor \compacts$ such that $Z=Z_f$.
Notice that $D$ is in fact a set, as there is only a set of
isomorphism classes of \lmccstar-algebras that can be generated by
the set $\N \times \N \times A$.
On the set $D$ there is a partial ordering defined as follows.
Pick representatives
$d=(Z_d,f_d)$ and define $d\ge e$ if there exits a \starhom\
$\varphi^d_{e}:Z_d \to Z_e$ such that
$f_e=(\varphi^d_e \tensor id_\compacts) \circ f_d$.
The partially ordered set $D$ is directed,
as for $d,e\in D$ one can define $f: A \to (Z_d \times Z_e)\tensor \compacts
\cong (Z_d \tensor \compacts) \times (Z_e \tensor \compacts)$ by
$f(a) = (f_d(a),f_e(a))$ and define $Z=Z_f$.
The topological \staralgs[] $Z$ then is an \lmccstar-algebra and
the \starhom\ $f$ can be regarded as a \starhom\ into $Z \tensor \compacts$,
with $(Z,f)$ bigger than $d$ and $e$ respectively by construction.
Now define $A \atensorK \compacts$ to be the $\nu$-completion
of the inverse limit $\lim\limits_{d\in D} Z_d$.
\end{construct}

\begin{prop}\label{prop_bijection_for_adjoint_of_K}
For $A,B \in \ourcat$ there is a natural continuous bijection
\begin{equation}\label{bijection_for_adjoint_of_K}
Hom(A \atensorK \compacts , B) \to  Hom(A, B \tensor \compacts)
\end{equation}
which induces an isomorphism on homotopy classes
\begin{equation}\label{bijection_for_adjoint_of_K(homotopyversion)}
[A \atensorK \compacts , B] \to  [A, B \tensor \compacts]
\end{equation}
\end{prop}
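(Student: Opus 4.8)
The plan is to exhibit \eqref{bijection_for_adjoint_of_K} as the adjunction bijection attached to a universal unit map, and then to check by hand that it is a bijection. First I would build a natural \starhom\ $\eta_A\colon A\to (A\atensorK\compacts)\tensor\compacts$. Each $d=(Z_d,f_d)\in D$ furnishes a \starhom\ $f_d\colon A\to Z_d\tensor\compacts$, and the defining condition $f_e=(\varphi^d_e\tensor id)\circ f_d$ for $d\ge e$ says exactly that these assemble into a cone over the inverse system $\{Z_d\tensor\compacts\}$; they therefore induce a map $A\to\lim_d(Z_d\tensor\compacts)$. The preceding lemma, that tensoring with $\compacts$ commutes with inverse limits, identifies the target with $(\lim_d Z_d)\tensor\compacts$, and tensoring the dense inclusion $\lim_d Z_d\subset\complete{\lim_d Z_d}=A\atensorK\compacts$ with $\compacts$ then produces $\eta_A$. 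I would then define the map $\Phi$ of \eqref{bijection_for_adjoint_of_K} by $\Phi(g)=(g\tensor id_\compacts)\circ\eta_A$. Naturality of $\Phi$ in both variables is formal --- in $B$ it is immediate from $(\beta g\tensor id_\compacts)\circ\eta_A=(\beta\tensor id_\compacts)\circ(g\tensor id_\compacts)\circ\eta_A$ --- and continuity is a routine compact-open estimate.

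For surjectivity, given $h\colon A\to B\tensor\compacts$ I would form the \substaralg\ $Z_h\subset B$ generated by the matrix coefficients $x^B_{ij}(h(a))$, with the \lmccstar-topology it inherits from $B$. Since $h(a)=\sum_{ij}x^B_{ij}(h(a))\tensor e_{ij}$ already lies in $Z_h\tensor\compacts$ and $Z_h=(Z_h)_h$ by construction, the pair $(Z_h,h)$ represents an element $d_h\in D$. The projection $pr_{d_h}\colon\lim_d Z_d\to Z_{d_h}=Z_h\subset B$ extends over the $\nu$-completion, using \ref{completion_is_leftadjoint} and the $\nu$-completeness of $B$, to a \starhom\ $g\colon A\atensorK\compacts\to B$; a short matrix-coefficient computation then shows $(g\tensor id_\compacts)\circ\eta_A=h$.

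For injectivity I would first use \ref{completion_is_leftadjoint} to replace \starhoms[] out of $A\atensorK\compacts$ by \starhoms[] out of $\lim_d Z_d$, and then prove that the \substaralg\ $G\subset\lim_d Z_d$ generated by the tautological coefficients $\zeta_{ij}(a):=x^{\lim_d Z_d}_{ij}(\eta_A(a))$ is dense. The key observation is that $pr_d(\zeta_{ij}(a))=f_d(a)_{ij}$, so $pr_d(G)=Z_d$ for every $d$. Because the inverse-limit topology is the subspace topology from the product and $D$ is directed, any finite collection of coordinate conditions on a target element is dominated by a single index $d^*$; choosing $w\in G$ with $pr_{d^*}(w)$ equal to the prescribed value then matches that element in all the relevant coordinates through the transition maps $\varphi^{d^*}_e$, so $G$ is dense. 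Two \starhoms[] into the Hausdorff algebra $B$ that agree on the dense \substaralg\ $G$ coincide, which gives injectivity. This density statement, threaded carefully through both the inverse limit and the $\nu$-completion, is the technical heart of the bijection.

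Finally, to obtain \eqref{bijection_for_adjoint_of_K(homotopyversion)} I would run the interval argument. Set $I=[0,1]$; a homotopy of \starhoms[] into $B$ (resp.\ $B\tensor\compacts$) is a \starhom\ into $\cof[I,B]$ (resp.\ $\cof[I,B\tensor\compacts]$), and both targets lie in $\ourcat$ by \ref{cin}. Lemma \ref{take_tensor_K_out_of_cof}, applied to $X=I$, supplies the natural isomorphism $\cof[I,B\tensor\compacts]\ouriso\cof[I,B]\tensor\compacts$, so the bijection $\Phi$ with target $\cof[I,B]$ converts a homotopy joining $\Phi(g_0)$ and $\Phi(g_1)$ into a \starhom\ $A\atensorK\compacts\to\cof[I,B]$; naturality with respect to the endpoint evaluations $\eval_0,\eval_1\colon\cof[I,B]\to B$ then identifies its two ends with $g_0$ and $g_1$. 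Together with the surjectivity already shown, this proves $\Phi$ descends to a bijection on homotopy classes. I expect the main obstacle to be precisely this last point: $\Phi$ is only a continuous bijection and need not be a homeomorphism, so a homotopy downstairs cannot be transported back directly --- the interval identification of Lemma \ref{take_tensor_K_out_of_cof} is exactly what repairs the defect, at the price of invoking separability of $B$ in that lemma.
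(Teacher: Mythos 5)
Your proposal is correct, and for the construction of the map and for surjectivity it coincides with the paper's argument: the paper also defines \eqref{bijection_for_adjoint_of_K} by tensoring a \starhom\ $\lim_d Z_d \to B$ with $id_\compacts$ and precomposing with the canonical cone map $A \to \lim_d Z_d \tensor \compacts$ (your $\eta_A$), and it proves surjectivity by exactly your $(Z_h,h)$ construction. Where you genuinely diverge is injectivity: the paper shows that every \starhom\ $\lim_d Z_d \to B$ factors as a projection $\lim_d Z_d \to Z_d$ followed by an inclusion $Z_d \subset B$, and that the index $d$ together with that inclusion is uniquely determined by the induced map $A \to B \tensor \compacts$; you instead prove that the \substaralg\ generated by the tautological coefficients $\zeta_{ij}(a)=x_{ij}(\eta_A(a))$ is dense in $\lim_d Z_d$ (using $pr_d(\zeta_{ij}(a))=f_d(a)_{ij}$, directedness of $D$, and the subspace-of-the-product description of the limit topology), so that two \starhoms\ into the Hausdorff algebra $B$ with the same image under $\Phi$ agree on a dense subalgebra. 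Your density argument is sound and arguably more self-contained, since it avoids having to identify the topological image $Z'$ with the generated subalgebra $Z$ as the paper does. For the homotopy statement the paper simply says it ``follows by considering $\cof[I,B]$ as well as $B$, exploiting the naturality''; your version makes the hidden step explicit, namely that one must identify $Hom(A,\cof[I,B]\tensor\compacts)$ with $Hom(A,\cof[I,B\tensor\compacts])$ in order to pull a homotopy downstairs back through $\Phi_{\cof[I,B]}$, and you correctly observe that quoting Lemma \ref{take_tensor_K_out_of_cof} for this imports its separability hypothesis on $B$, whereas the proposition is stated for all $B\in\ourcat$. This is a real (if minor) point of tension that the paper's one-line justification glosses over as well, so it is worth either verifying that the surjectivity argument in Lemma \ref{take_tensor_K_out_of_cof} does not actually need separability, or restricting the homotopy statement accordingly.
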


\begin{proof}
First notice the statement concerning homotopy classes follows by simply
considering $\cof[I,B]$ as well as $B$, exploiting the naturality.

By the previous lemma, taking the tensor product with the compact
operators $\compacts$ commutes with inverse limits.
Moreover, taking the tensor product with $\compacts$
induces a natural continuous map
$Hom(A,B) \to Hom(A \tensor \compacts, B\tensor \compacts)$.
Thus, there is a corresponding continuous map
$Hom(\lim_d Z_d, B) \to
Hom(\lim_d Z_d \tensor \compacts, B \tensor \compacts)$.
On the other hand, there is the canonical continuous \starhom\
$A\to \lim_d Z_d\tensor \compacts$,
whose composition with the projection to the factor
$Z_d\tensor \compacts$ corresponding
to the index $d= (Z_d, f_d:A \to Z_d\tensor \compacts)$
is the \starhom\ $f_d$.
The map \eqref{bijection_for_adjoint_of_K}
then is defined as the composition of the continuous maps
\[Hom(\lim_d Z_d, B) \to Hom(\lim_d Z_d \tensor \compacts, B \tensor
\compacts) \to Hom(A, B \tensor \compacts),
\]
where the second map is induced by precomposing a \starhom\ with the
canonical \starhom\ $A \to \lim_d Z_d \tensor \compacts$.
In particular, \eqref{bijection_for_adjoint_of_K} is continuous by
construction.

That the map described yields a bijection essentially
corresponds to an application of Freyd's Adjoint Functor Theorem
(\cite[Theorem 3.3.3 on page 109]{Borceux(1994a)}).
Instead of reducing to the abstract result, we give a direct proof.

To see surjectivity, let $f:A \to B \tensor \compacts$ be a
continuous \starhom. Let $Z$ be the topological \substaralg\ of $B$ which
is generated by the set
of elements $\bigcup_{i,j\in \N,a \in A} x^B_{ij}(f(a))$.
Then $f$ can be factored through $Z \tensor \compacts$,
so that $(Z,f)$ can be regarded as an index in $D$.
The \starhom\ $f:A \to B \tensor \compacts$
then is the image of the composite \starhom\
\[\lim_d Z_d \to Z \to B
\]
which is given by the projection corresponding to the
index $(Z,f)$ followed by the inclusion $Z \subset B$.

To see injectivity, we first show that any \starhom\ $\lim_d Z_d \to B$
is given by the composition of some projection $\lim_d Z_d \to Z_d$
followed by an inclusion $Z_d \subset B$.
Given any \starhom\ $\lim_d Z_d \to B$,
let $Z'$ be its topological image and apply $? \tensor \compacts$.
The corresponding \starhom\
$\lim_d Z_d \tensor \compacts \to B \tensor \compacts$
then factors as the composition
\[ \lim_d Z_d \tensor \compacts \to Z' \tensor \compacts \to B \tensor \compacts.\]
When we precompose this \starhom\ with the canonical \starhom\
$A \to \lim_d Z_d \tensor \compacts$ we obtain a \starhom\
$f: A \to B \tensor \compacts$. Define the index $d=(Z, f)$,
where $Z$ is the topological \staralgs[]
generated by the set $\bigcup_{i,j\in \N, a \in A} x^{Z'}_{ij}f(a)_{ij}$
where (abusing notation) the range of $f$ is restricted to
$Z' \tensor \compacts$.
It follows that the \starhom\
$\lim_d Z_d \tensor \compacts \to Z' \tensor \compacts$
can be factored
\[ \lim_d Z_d \tensor \compacts \to Z \tensor \compacts \to Z' \tensor \compacts,\]
where the first \starhom\ is induced by the projection $\lim_d Z_d \to Z$.
Since the composition $\lim_d Z_d \to Z'$ is surjective
and $Z \to Z'$ is injective, it follows that $Z \cong Z'$,
so $\lim_d Z_d \to B$ indeed is given by
the composition of a projection followed by an inclusion.
Moreover, the index $d$ as well as the corresponding inclusion
$Z \subset B$
are uniquely determined by the \starhom\ $f: A \to B \tensor \compacts$.
Thus, the map \eqref{bijection_for_adjoint_of_K} is injective.
\end{proof}

\begin{prop}\label{cont_aspects}
Let $A\in \calg$, $B\in \ourcat$, $X\in \cpt$ and $Y \in \ptcpt$.
Then there are natural homeomorphisms
\begin{equation}\label{a_natural_homeo}
Hom(A \atensorK \compacts, \cof[X,B]) \to
\cof[X, Hom(A \atensorK \compacts, B)],
\end{equation}
\begin{equation}\label{pta_natural_homeo}
Hom(A \atensorK \compacts, \ptcof{Y,B}) \to
\ptcof{Y, Hom(A \atensorK \compacts, B)},
\end{equation}
and the following two maps, given by composing a map
 with the map \eqref{bijection_for_adjoint_of_K},
\begin{equation}\label{therhvmap}
\cof[X, Hom(A \atensorK \compacts, B)] \to \cof[X, Hom(A, B \tensor \compacts)],
\end{equation}
\begin{equation}\label{therhvmap_pointed}
\ptcof{Y, Hom(A \atensorK \compacts, B)} \to \ptcof{Y, Hom(A, B \tensor \compacts)},
\end{equation}
 are continuous bijections.
\end{prop}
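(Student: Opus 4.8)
The plan is to deduce all four assertions from the single continuous bijection $\phi=$~\eqref{bijection_for_adjoint_of_K} of Proposition~\ref{prop_bijection_for_adjoint_of_K}, together with Proposition~\ref{cadjoint} applied to the \emph{compactly generated} algebra $A\in\calg$, by organizing them into one naturality square. Throughout write $\phi_C\colon \Hom(A\atensorK\compacts,C)\to\Hom(A,C\tensor\compacts)$ for \eqref{bijection_for_adjoint_of_K} with target $C\in\ourcat$; since $\cof[X,B]\in\ourcat$ by Lemma~\ref{cin}, the map $\phi_{\cof[X,B]}$ is available. First I record what is automatic. The map \eqref{a_natural_homeo} is exactly the inclusion of Proposition~\ref{cadjoint} with first variable $A\atensorK\compacts$; since that map is always a topological embedding, \eqref{a_natural_homeo} is a homeomorphism as soon as it is surjective. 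Likewise \eqref{therhvmap} is $\cof[X,\phi_B]$, hence continuous and injective because $\phi_B$ is, so only its surjectivity is in question. The pointed maps \eqref{pta_natural_homeo} and \eqref{therhvmap_pointed} are treated identically using the pointed halves of Propositions~\ref{cadjoint} and~\ref{prop_bijection_for_adjoint_of_K}.

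Next I would assemble the square. Let $\kappa\colon \cof[X,B]\tensor\compacts\to\cof[X,B\tensor\compacts]$ be the canonical inclusion from the proof of Lemma~\ref{take_tensor_K_out_of_cof}, and let $\Theta\colon \Hom(A,\cof[X,B\tensor\compacts])\to\cof[X,\Hom(A,B\tensor\compacts)]$ be the homeomorphism of Proposition~\ref{cadjoint}, available because $A\in\calg$ is compactly generated. Chasing the evaluations at $x$, namely $ev_x\colon\cof[X,B]\to B$, through the naturality of $\phi$ in the target variable, together with the identity $ev_x\tensor id=ev_x\circ\kappa$, shows that the composite of \eqref{a_natural_homeo} followed by \eqref{therhvmap} equals
\[ \Theta\circ\Hom(A,\kappa)\circ\phi_{\cof[X,B]}\colon \Hom(A\atensorK\compacts,\cof[X,B])\longrightarrow\cof[X,\Hom(A,B\tensor\compacts)]. \]
In this factorization $\phi_{\cof[X,B]}$ is a continuous bijection, $\Theta$ is a homeomorphism, and $\Hom(A,\kappa)$ is injective because $\kappa$ is. Hence the whole proposition reduces to the single claim that $\Hom(A,\kappa)$ is surjective, i.e.\ that every \starhom\ $A\to\cof[X,B\tensor\compacts]$ factors through $\kappa$.

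Granting that $\Hom(A,\kappa)$ is a bijection, the displayed composite is a continuous bijection $\Xi$. Writing $u$ for \eqref{a_natural_homeo} (an embedding, so injective) and $v$ for \eqref{therhvmap} (continuous, injective), surjectivity of $\Xi=v\circ u$ forces $v$ to be surjective, hence a continuous bijection; then $u=v^{-1}\circ\Xi$ is a set bijection, so $u$, being an embedding, is a homeomorphism. The pointed statements \eqref{pta_natural_homeo} and \eqref{therhvmap_pointed} follow the same way. Thus the entire content is the factorization claim, which is a parametrized form of Lemma~\ref{take_tensor_K_out_of_cof}: for a \starhom\ $\Psi\colon A\to\cof[X,B\tensor\compacts]$ and $a\in A$ one forms the coefficient functions $\Psi(a)_{ij}(x)=x^B_{ij}(\Psi(a)(x))\in\cof[X,B]$ and must see that $\sum_{ij}\Psi(a)_{ij}\tensor e_{ij}$ converges in $\cof[X,B]\tensor\compacts$ to a preimage of $\Psi(a)$. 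The truncation operators $b\mapsto(1\tensor P_F)\,b\,(1\tensor P_F)$, for finite $F\subset\N\times\N$, are contractions converging strongly to the identity on $B\tensor\compacts$, hence converge uniformly on the compact set $\Psi(a)(X)$; this uniformity over $X$, combined with $\nu$-completeness of $\cof[X,B]\tensor\compacts$ (for $\nu$ at least countable), yields the required convergence.

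The hard part is exactly this factorization/surjectivity step, that is, commuting $\tensor\compacts$ past $\cof[X,-]$ at the level of \starhoms\ out of $A$. The subtlety is that $\cof[X,B]$ generally carries continuous \cstar-seminorms beyond the sup-seminorms $p_X$, so one must verify convergence of the coefficient expansion in every seminorm of the tensor product $\cof[X,B]\tensor\compacts$, and the only available leverage is the compactness of $X$, which upgrades the strong convergence of the truncations to uniform convergence on the compact image. This is precisely the exponential-law difficulty flagged in the introduction, and it is the reason one cannot simply invoke Proposition~\ref{cadjoint} for the non-compactly-generated algebra $A\atensorK\compacts$ directly.
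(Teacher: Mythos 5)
Your proposal is correct and follows essentially the same route as the paper: the same commutative square relating \eqref{a_natural_homeo} and \eqref{therhvmap} to $\phi_{\cof[X,B]}$, the isomorphism $\cof[X,B]\tensor\compacts\ouriso\cof[X,B\tensor\compacts]$, and the homeomorphism of Proposition~\ref{cadjoint} for the compactly generated $A\in\calg$, followed by the identical injective-plus-surjective-composite diagram chase. The only difference is that you re-derive the surjectivity of $\kappa$ by hand where the paper simply cites Lemma~\ref{take_tensor_K_out_of_cof}; note that the lemma as stated carries a separability hypothesis on $B$ which the proposition does not, so your direct argument is, if anything, a small improvement.
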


\begin{proof}
First, notice the statements for the pointed setting follow from the 
corresponding unpointed version, as in the proof of Proposition \ref{cadjoint}.
Also by Proposition \ref{cadjoint}, the map \eqref{a_natural_homeo}
is a topological inclusion, while the map \eqref{therhvmap} certainly is continuous.
Thus, we need to show that the two maps are bijections.

Consider the following commutative diagram,
where the unlabeled vertical arrows are induced by the
continuous bijections \eqref{bijection_for_adjoint_of_K}
and where the horizontal arrows are induced by the
maps \eqref{bijection_cadjoint} of Proposition \ref{cadjoint}

\[
\xymatrix{
Hom(A \atensorK \compacts, \cof[X,B]) \ar[d] \ar[r] &  \cof[X, Hom(A \atensorK \compacts, B)] \ar[dd]\\
Hom(A, \cof[X,B] \tensor \compacts) \ar[d]^{\stackrel{\eqref{take_tensor_K_out_of_cof}}{\cong}} & \\
Hom(A, \cof[X,B \tensor \compacts)] \ar[r] & \cof[X, Hom(A, B \tensor \compacts)].
}
\]
>From Proposition \ref{prop_bijection_for_adjoint_of_K}, it follows that the
first left vertical arrow is a bijection. Therefore, the composition of the two
left vertical arrows is also a bijection by Lemma \ref{take_tensor_K_out_of_cof}.
Proposition \ref{prop_bijection_for_adjoint_of_K} in addition implies that
the right vertical arrow is a injective map, and the
lower horizontal arrow is a bijection by Proposition \ref{cadjoint}.
It follows that the right vertical arrow is a bijection, therefore the
upper horizontal arrow must also be a bijection.
\end{proof}

\begin{prop} \label{tensorK_adjoint_is_weq}
For $A \in \calg$ and $B \in \ourcat$ the canonical bijection \eqref{bijection_for_adjoint_of_K}
\[Hom(A \atensorK \compacts, B) \to Hom(A, B \tensor \compacts)\]
is a weak homotopy equivalence.
\end{prop}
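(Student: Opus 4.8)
The plan is to reduce the statement to a routine homotopy-theoretic argument whose entire analytic content is already packaged in Proposition \ref{cont_aspects}. Write $\Phi$ for the continuous bijection \eqref{bijection_for_adjoint_of_K}, and abbreviate its source and target by $W = \Hom(A \atensorK \compacts, B)$ and $V = \Hom(A, B \tensor \compacts)$. By Proposition \ref{prop_bijection_for_adjoint_of_K} the map $\Phi\colon W \to V$ is a continuous bijection of pointed Hausdorff spaces; in particular it is a bijection of underlying sets. The key additional input is that for every $X \in \cpt$ the post-composition map $\Phi_*\colon \cof[X, W] \to \cof[X, V]$ is a bijection, which is precisely the content of the map \eqref{therhvmap} in Proposition \ref{cont_aspects} (here we invoke the hypothesis $A \in \calg$).

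Granting these two facts, I would prove the following elementary lemma: a continuous bijection $f\colon W \to V$ of Hausdorff spaces for which $f_*\colon \cof[X, W] \to \cof[X, V]$ is a bijection for every compact $X$ is automatically a weak homotopy equivalence. Applying it to $f = \Phi$ then finishes the proof. The virtue of this unpointed formulation is that it treats all basepoints simultaneously, so that no separate basepoint bookkeeping is required; the basepoints are tracked entirely by the underlying set-bijection $f$.

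To prove the lemma, I would check that $f$ induces a bijection on $\pi_n(\,\cdot\,, w)$ for every $n \ge 0$ and every $w \in W$. Surjectivity on homotopy groups comes from lifting maps out of spheres: given a based $\beta\colon S^n \to V$, the bijectivity of $f_*\colon \cof[S^n, W] \to \cof[S^n, V]$ produces the unique $\tilde\beta$ with $f \circ \tilde\beta = \beta$, and since $f$ is injective on points $\tilde\beta$ automatically respects the basepoint and yields a preimage class. Injectivity comes from lifting homotopies: a based homotopy $H\colon S^n \times I \to V$ from $f\alpha_0$ to $f\alpha_1$ lifts uniquely through the bijection $f_*\colon \cof[S^n \times I, W] \to \cof[S^n \times I, V]$ to some $\tilde H$; injectivity of $f_*$ on $\cof[S^n, W]$ forces the two ends of $\tilde H$ to be $\alpha_0$ and $\alpha_1$, while injectivity of $f$ on points forces $\tilde H$ to be constant along the basepoint of $S^n$, so $\tilde H$ is a based homotopy $\alpha_0 \simeq \alpha_1$. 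The case $n = 0$ is the same argument run with $X$ a point and $X = I$. At each step compactness of $S^n$, $S^n \times I$, $I$ and the point is exactly what permits invoking the mapping-space bijections.

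The main obstacle is not in this bookkeeping, which is forced once the mapping-space bijections are in hand, but rather in the fact that we possess bijections only on $\cof[X, -]$ for compact $X$, and neither a genuine adjunction nor a homeomorphism $\Phi$. Consequently one cannot simply quote that an equivalence of the spaces induces an equivalence of mapping spaces; the argument must use the uniqueness half of each bijection to pin the lifted maps and homotopies down on the nose, combined with the set-bijection $f$ to control basepoints. This is precisely why Proposition \ref{cont_aspects} was established with the strength of bijectivity, rather than mere surjectivity, of the comparison maps \eqref{therhvmap}.
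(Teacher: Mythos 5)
Your proposal is correct and follows essentially the same route as the paper: both arguments take the bijectivity of the comparison maps \eqref{therhvmap} from Proposition \ref{cont_aspects} with $X=S^n$ to lift based spheres (surjectivity on $\pi_n$) and with $X=S^n\times I$ to lift based homotopies (injectivity on $\pi_n$), using pointwise injectivity of the underlying bijection to control basepoints. Your extraction of the general lemma about continuous bijections inducing bijections on $\cof[X,-]$ for all compact $X$ is a clean packaging of what the paper does in-line, but it is not a different argument.
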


\begin{proof} Given a choice of a basepoint for
$Hom(A \atensorK \compacts , B)$
we regard $Hom(A \atensorK \compacts , B)$ as a pointed space, and
\eqref{bijection_for_adjoint_of_K} as a pointed map.
We need to show that the maps
\[ \pi_n(Hom(A \atensorK \compacts, B)) \to \pi_n (Hom(A, B \tensor \compacts))\]
are isomorphisms for any choice of a basepoint in
$Hom(A \atensorK \compacts , B)$.

This immediately follows from the previous proposition.
To see surjectivity let $f'\in \ptcof{S^n,Hom(A, B \tensor\compacts)}$ be a
pointed map. Consider the map \eqref{therhvmap} for $X=S^n$.
The pre-image $f\in \cof[S^n,Hom(A \atensorK \compacts, B)]$ of $f'\in\cof[S^n,Hom(A, B \tensor\compacts)]$
then is a pointed map representing a pre-image $[f]\in \pi_n(Hom(A \atensorK \compacts, B))$ of
$[f']\in \pi_n(Hom(A, B \tensor\compacts))$.

Injectivity follows similarly.
Let $f,g: S^n \to Hom(A \atensorK \compacts, B)$ be pointed maps
and assume that their images $f',g': S^n \to Hom(A, B \tensor \compacts)$
are pointed homotopic. Let $H': S^n \times I \to  Hom(A, B \tensor \compacts)$ be a
pointed homotopy between $f'$ and $g'$. Now consider the map
\eqref{therhvmap} for $X=S^n \times I$. The pre-image of $H'$
then yields a pointed homotopy $H: S^n \times I \to Hom(A \atensorK \compacts, B)$
between $f$ and $g$.
\end{proof}

We now show that for any $A \in \calg$
the functor
\[\leftaatensorK = (A \atensorK \compacts) \ptatimes ?: \ptcpt \to \ourcat\]
is a partial left adjoint to the functor
\[\rightaatensorK = \Hom(A \atensorK \compacts,?):\ourcat \to \pttop.
\]

\begin{prop}
\label{another_partial}
For $X\in \ptcpt$, $A \in \calg$ and $B\in \ourcat$, there is a natural continuous bijection
\[ Hom((A \atensorK \compacts) \ptatimes X, B) \cong \ptcof{X, Hom(A \atensorK \compacts, B)}.\]
\end{prop}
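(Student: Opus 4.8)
The plan is to recognize the claimed bijection as a composite of two maps already constructed, applied with $A \atensorK \compacts$ playing the role of the domain algebra. Concretely, I would factor the desired identification as
\[
\Hom((A \atensorK \compacts) \ptatimes X, B) \to \Hom(A \atensorK \compacts, \ptcof{X,B}) \to \ptcof{X, \Hom(A \atensorK \compacts, B)},
\]
where the first arrow is the partial-adjunction bijection of Proposition \ref{almostadjoint} and the second is the pointed exponential-law homeomorphism \eqref{pta_natural_homeo} established in Proposition \ref{cont_aspects}.

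First I would confirm that the hypotheses line up. Since $A \atensorK \compacts$ is, by its construction, the $\nu$-completion of an inverse limit of \lmccstar-algebras, it is an object of $\ourcat$; hence Proposition \ref{almostadjoint}, applied with $A \atensorK \compacts$ in place of $A$, supplies the natural continuous bijection \eqref{bijection_almostadjoint}
\[
\Hom((A \atensorK \compacts) \ptatimes X, B) \to \Hom(A \atensorK \compacts, \ptcof{X,B}).
\]
Because $A \in \calg$, the second factor is exactly the map \eqref{pta_natural_homeo} of Proposition \ref{cont_aspects}, which was shown there to be a homeomorphism, hence in particular a continuous bijection. Composing a continuous bijection with a homeomorphism yields a continuous bijection, which produces the asserted identification.

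Naturality in $X$ and $B$ is inherited directly from the naturality of each constituent map, so no separate argument is required. The hard part here is essentially nil: all of the genuine content---constructing the partial adjoint $(?) \ptatimes X$ and verifying the pointed exponential law for $\Hom(A \atensorK \compacts, ?)$---was already carried out in Propositions \ref{almostadjoint} and \ref{cont_aspects}, the latter resting in turn on Proposition \ref{prop_bijection_for_adjoint_of_K} and Lemma \ref{take_tensor_K_out_of_cof}. The only points demanding care are bookkeeping ones: checking that $A \atensorK \compacts$ is a legitimate object of $\ourcat$ so that the earlier adjunction applies, and noting that the hypothesis $A \in \calg$ is precisely what licenses invoking \eqref{pta_natural_homeo}. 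This establishes that $(A \atensorK \compacts) \ptatimes ?$ is a partial left adjoint to $\Hom(A \atensorK \compacts, ?)$.
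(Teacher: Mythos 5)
Your proposal is correct and is exactly the paper's proof: the bijection is obtained by composing the continuous bijection of Proposition \ref{almostadjoint} (applied to $A \atensorK \compacts \in \ourcat$) with the homeomorphism \eqref{pta_natural_homeo} of Proposition \ref{cont_aspects}. Your additional remarks checking that $A \atensorK \compacts$ lies in $\ourcat$ and that $A \in \calg$ is what licenses \eqref{pta_natural_homeo} are accurate bookkeeping that the paper leaves implicit.
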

\begin{proof}
The bijection is obtained by composing the continuous bijection
\[ Hom((A \atensorK \compacts) \ptatimes X, B) \to Hom(A \atensorK \compacts, \ptcof{X,B})\]
provided by Proposition \ref{almostadjoint}
with the homeomorphism \eqref{pta_natural_homeo}.
\end{proof}

The statements of the following discussion 
(in particular those of Proposition \ref{reduce_to_pi-iso} and Proposition \ref{verify_cofibrant_replacement}) 
will be needed in order to
identify the homotopy classes of maps of the model categories we are going to define in
Section \ref{both}.
\begin{notation}\label{introduce_p_C}
Let $p: \C \to \compacts$ be a \starhom\
which maps the unit $1\in\C$ to a rank one projection.
For any $C\in \ourcat$, let
$p_{C}: C \ouriso C \tensor \C \to C \tensor \compacts$
be the map induced by tensoring $id_C$ with $p$,
and let $q_{C}: C \atensorK \compacts \to C$ be its adjoint.
\end{notation}

\begin{prop} \label{checking_cofibrant_replacement}
For any $A,B \in \ourcat$ composing with $q_{B}$ yields a natural isomorphism
\[ [A \atensorK \compacts, B \atensorK \compacts] \to [A \atensorK \compacts, B]. \]
\end{prop}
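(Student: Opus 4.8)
The plan is to exploit the homotopy-level adjunction of Proposition \ref{prop_bijection_for_adjoint_of_K} to transport the whole question to the functor $?\tensor\compacts$, where it becomes a statement about the stability of $\compacts$. Throughout write $L(?)=?\atensorK\compacts$ and $R(?)=?\tensor\compacts$, and recall from Notation \ref{introduce_p_C} that $q_B$ is by definition the element of $\Hom(LB,B)$ corresponding to $p_B\colon B\to RB$ under the bijection \eqref{bijection_for_adjoint_of_K}. Inspecting the construction in the proof of Proposition \ref{prop_bijection_for_adjoint_of_K}, that bijection sends a \starhom\ $\phi\colon LB\to B$ to $(\phi\tensor id_\compacts)\circ\eta_B$, where $\eta_B\colon B\to RLB=(B\atensorK\compacts)\tensor\compacts$ is the canonical map; hence $q_B$ is characterised by the on-the-nose identity $(q_B\tensor id_\compacts)\circ\eta_B=p_B$.

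First I would reduce to a statement about $q_B\tensor id_\compacts=R(q_B)$. Since the bijection of Proposition \ref{prop_bijection_for_adjoint_of_K} is natural in the second variable and induces isomorphisms on homotopy classes, applying it with second variable $LB$ and with $B$ yields a commuting square whose vertical maps are the isomorphisms $[LA,LB]\cong[A,RLB]$ and $[LA,B]\cong[A,RB]$, whose top map is $(q_B)_*$, and whose bottom map is $(q_B\tensor id_\compacts)_*$. Thus composing with $q_B$ is an isomorphism on $[LA,-]$ if and only if $q_B\tensor id_\compacts\colon (B\atensorK\compacts)\tensor\compacts\to B\tensor\compacts$ induces an isomorphism on $[A,-]$; I will in fact exhibit a homotopy inverse, so the conclusion holds for every $A\in\ourcat$ and, all constructions being natural, is natural in both variables.

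Second, fix a \cstar-isomorphism $m\colon \compacts\tensor\compacts\ouriso\compacts$ and define $s_B\colon B\tensor\compacts\to (B\atensorK\compacts)\tensor\compacts$ by $s_B=(id_{B\atensorK\compacts}\tensor m)\circ(\eta_B\tensor id_\compacts)$. A direct manipulation using functoriality of $\tensor$ together with $(q_B\tensor id_\compacts)\circ\eta_B=p_B$ gives $(q_B\tensor id_\compacts)\circ s_B=id_B\tensor(m\circ(p\tensor id_\compacts))$, that is, $id_B$ tensored with the stabilisation map $\compacts\to\compacts\tensor\compacts\xrightarrow{m}\compacts$, $k\mapsto m(p(1)\tensor k)$. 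For the reverse composite, naturality of the transformations $p$ and $\eta$ identifies $\eta_B\circ q_B$, up to the flip of the two $\compacts$-factors, with $p_{B\atensorK\compacts}$, so that $s_B\circ(q_B\tensor id_\compacts)$ agrees, again up to that flip, with $id_{B\atensorK\compacts}$ tensored with the same stabilisation map.

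The main obstacle, and the only genuinely analytic input, is therefore the third step: checking that these two self-maps are homotopic to the identity inside $\ourcat$. This is exactly the stability of $\compacts$: the stabilisation $\compacts\to\compacts$, $k\mapsto m(p(1)\tensor k)$, is homotopic to $id_\compacts$ through a path of \starhoms, and the flip automorphism of $\compacts\tensor\compacts\ouriso\compacts$ is inner, hence likewise homotopic to the identity. Granting these two standard homotopies — realised as \starhoms\ into $\cof[I,\compacts]$ and then tensored with the relevant identity maps — one concludes $(q_B\tensor id_\compacts)\circ s_B\simeq id$ and $s_B\circ(q_B\tensor id_\compacts)\simeq id$, so that $q_B\tensor id_\compacts$ is a homotopy equivalence; the reduction of the first step then finishes the proof. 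The care required is to keep every identification at the level of homotopy classes, since the adjunction of Proposition \ref{prop_bijection_for_adjoint_of_K} is an isomorphism only there, and to ensure the stability homotopies survive the $\nu$-completions implicit in $\tensor$ and in $?\atensorK\compacts$.
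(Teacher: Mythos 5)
Your overall strategy --- use naturality of the bijection \eqref{bijection_for_adjoint_of_K} to reduce the claim to showing that $q_B\tensor id_\compacts\colon (B\atensorK\compacts)\tensor\compacts\to B\tensor\compacts$ is a homotopy equivalence, then exhibit an explicit inverse --- is sound, and your first composite $(q_B\tensor id_\compacts)\circ s_B=id_B\tensor\bigl(m\circ(p\tensor id_\compacts)\bigr)$ is computed correctly on the nose and is indeed homotopic to the identity by stability of $\compacts$ (Lemma \ref{technical_stuff}). The gap is in the reverse composite. You assert that ``naturality of $p$ and $\eta$ identifies $\eta_B\circ q_B$, up to the flip of the two $\compacts$-factors, with $p_{B\atensorK\compacts}$.'' Writing $L(?)=?\atensorK\compacts$, naturality of $p$ applied to $q_B$ gives only $(q_B\tensor id_\compacts)\circ p_{LB}=p_B\circ q_B=(q_B\tensor id_\compacts)\circ(\eta_B\circ q_B)$, i.e.\ the two maps $LB\to LB\tensor\compacts$ agree \emph{after} postcomposition with $q_B\tensor id_\compacts$; since $q_B$ is essentially a projection off the inverse limit $\lim_d Z_d$, that postcomposition is far from injective and you cannot cancel. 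In fact the two maps genuinely differ pointwise: the $d$-component of $\eta_B(q_B(l))$ is $f_d(l_{d_0})$ (where $d_0$ is the index corresponding to $p_B$), depending only on the $d_0$-component of $l$, whereas the $d$-component of $p_{LB}(l)=l\tensor p(1)$ is $l_d\tensor p(1)$. Note also that $LB\tensor\compacts$ has only one visible $\compacts$-factor, so ``the flip of the two $\compacts$-factors'' does not even parse for maps $LB\to LB\tensor\compacts$.

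The statement you need, $[\eta_B\circ q_B]=[p_{LB}]$ in $[LB,LB\tensor\compacts]$, is true but requires another pass through the adjunction: apply the homotopy-level bijection $[LB,LB\tensor\compacts]\cong[B,LB\tensor\compacts\tensor\compacts]$ of \eqref{bijection_for_adjoint_of_K(homotopyversion)}, check that $\eta_B\circ q_B$ is sent to $p_{LB\tensor\compacts}\circ\eta_B$ while $p_{LB}$ is sent to $(p_{LB}\tensor id_\compacts)\circ\eta_B=(id_{LB}\tensor\sigma)\circ p_{LB\tensor\compacts}\circ\eta_B$ (here $\sigma$ really is the flip of two outer $\compacts$-factors, homotopic to the identity), and then invoke the \emph{injectivity} of that bijection on homotopy classes. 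The paper avoids this issue altogether: it never writes down an explicit inverse, but shows that precomposition with $q_{A'}\tensor id_\compacts$ induces a natural isomorphism $[A'\tensor\compacts,B'\tensor\compacts]\to[(A'\atensorK\compacts)\tensor\compacts,B'\tensor\compacts]$ and then extracts a two-sided homotopy inverse of $f=q_B\tensor id_\compacts$ by the standard manipulation ($[f\circ g]\mapsto[f\circ g\circ f]=[f]$, hence $[f\circ g]=[id]$ by injectivity). Either repair completes your argument; as written, the reverse-composite step is unjustified.
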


We need a few technical facts before we proceed to prove the Proposition.

\begin{lemma}\label{technical_stuff}\
\begin{enumerate}
\item \label{ST1}
The map $p_\compacts: \compacts \to \compacts \tensor \compacts$ is homotopic to an isomorphism.
\item \label{ST2}
Any two isomorphisms $\compacts \to \compacts \tensor \compacts$ are homotopic.
\item \label{addKLHS} For $B,B'\in \ourcat$, precomposing with $p_B: B \to B \tensor \compacts$ induces
a natural isomorphism $
  [B \tensor \compacts, B' \tensor \compacts] \to
  [B , B' \tensor \compacts]$.
\end{enumerate}
\end{lemma}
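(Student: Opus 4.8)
The plan is to treat the three statements in order, observing that the real content lies in parts~\eqref{ST1} and~\eqref{ST2}, which are facts about the honest \cstar-algebra $\compacts$, while part~\eqref{addKLHS} will follow formally once they are in hand. For~\eqref{ST1} I would fix a Hilbert space isomorphism $H \tensor H \homeo H$ and the resulting isomorphism $\phi: \compacts\tensor\compacts \ouriso \compacts$ (here $\compacts \tensor \compacts$ formed in $\ourcat$ agrees with the \cstar-algebra tensor product, both factors being \cstar-algebras). Writing $e = p(1)$ for the rank one projection onto $\C\xi$, the isometry $w: H \to H\tensor H$, $\eta \mapsto \eta\tensor\xi$, satisfies $k\tensor e = wkw^*$, so $\phi\circ p_\compacts = \mathrm{Ad}(s)$ for the isometry $s = u_H w$, where $u_H$ implements $\phi$; here $s^*s = 1$ and $ss^*$ has infinite corank. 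I would then connect $s$ to a unitary $u$ through a norm-continuous path of isometries $s_t$ (possible since an isometry with infinite-corank range on a separable infinite dimensional Hilbert space is norm-path-connected to the unitaries). The assignment $k \mapsto (t \mapsto s_t k s_t^*)$ defines a $\starhom$ $\compacts \to \cof[I,\compacts]$, since each $s_t^*s_t=1$ makes $\mathrm{Ad}(s_t)$ a $\starhom$ and $t \mapsto s_t k s_t^*$ is norm-continuous; this is a homotopy from $\phi\circ p_\compacts=\mathrm{Ad}(s)$ to the automorphism $\mathrm{Ad}(u)$. Composing with $\phi^{-1}$ exhibits $p_\compacts$ as homotopic to the isomorphism $\phi^{-1}\circ\mathrm{Ad}(u)$.

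For~\eqref{ST2}, composing with the fixed isomorphism $\phi$ reduces the claim to showing that every automorphism of $\compacts$ is homotopic to the identity. Every automorphism of $\compacts$ has the form $\mathrm{Ad}(u)$ for a unitary $u$ on $H$, and since the unitary group of a separable infinite dimensional Hilbert space is norm-path-connected, a path from $u$ to $1$ yields, exactly as above, a homotopy $\mathrm{Ad}(u) \simeq id_\compacts$. Given two isomorphisms $\alpha,\beta:\compacts \to \compacts\tensor\compacts$, the endomorphisms $\phi\alpha$ and $\phi\beta$ are then both homotopic to $id_\compacts$, whence $\alpha \simeq \phi^{-1} \simeq \beta$.

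For~\eqref{addKLHS}, write $D = B'\tensor\compacts$. The structural consequence of~\eqref{ST1}--\eqref{ST2} is that $D$ is \emph{$\compacts$-stable}: for a fixed isomorphism $m:\compacts\tensor\compacts\ouriso\compacts$, the map $p_D = id_{B'}\tensor p_\compacts: D \to D\tensor\compacts$ is a homotopy equivalence with homotopy inverse $r_D = id_{B'}\tensor m$. Indeed $r_D\circ p_D = id_{B'}\tensor(m\circ p_\compacts) \simeq id_D$, because $m\circ p_\compacts$ is homotopic to an automorphism of $\compacts$ by~\eqref{ST1} and hence to $id_\compacts$ by~\eqref{ST2}, and tensoring with $id_{B'}$ preserves homotopies via the canonical map $B'\tensor\cof[I,?] \to \cof[I, B'\tensor ?]$ (compare Lemma~\ref{take_tensor_K_out_of_cof}); the other composite is handled identically. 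I would define the candidate inverse $\Phi:[B,D]\to[B\tensor\compacts,D]$ by $[f]\mapsto[r_D\circ(f\tensor id_\compacts)]$. Since $p$ is a natural transformation from the identity to $?\tensor\compacts$, one has the strict identities $(f\tensor id_\compacts)\circ p_B = p_D\circ f$ and $(\psi\tensor id_\compacts)\circ p_{B\tensor\compacts} = p_D\circ\psi$ for all $f:B\to D$ and $\psi:B\tensor\compacts\to D$. The first gives $(p_B)^*\Phi[f]=[r_D p_D f]=[f]$. For the reverse composite I first note $p_B\tensor id_\compacts \simeq p_{B\tensor\compacts}$ as maps $B\tensor\compacts\to B\tensor\compacts\tensor\compacts$: both are $id_B$ tensored with a map $\compacts\to\compacts\tensor\compacts$ (namely $p\tensor id_\compacts$ and $p_\compacts$ respectively), these differ only by the flip and are each homotopic to an isomorphism, hence homotopic to one another by~\eqref{ST2}, and tensoring with $id_B$ preserves the homotopy. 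Therefore $\Phi(p_B)^*[\psi]=[r_D(\psi\tensor id_\compacts)(p_B\tensor id_\compacts)]\simeq[r_D(\psi\tensor id_\compacts)p_{B\tensor\compacts}]=[r_D p_D\psi]=[\psi]$, so $\Phi$ is a two-sided inverse to $(p_B)^*$; naturality in $B$ and $B'$ is immediate from the construction.

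The main obstacle is part~\eqref{ST1}: everything else is either a standard connectedness fact about the unitary or isometry groups, or, in part~\eqref{addKLHS}, formal bookkeeping with the naturality of $p$ and the fact that tensoring preserves homotopies in $\ourcat$. The delicate point in~\eqref{ST1} is to produce a genuinely norm-continuous path of isometries realizing the homotopy and to verify that conjugation by it is a $\starhom$ into $\cof[I,\compacts]$ rather than merely a pointwise-continuous family; the pitfall to avoid is allowing the path to leave the isometries before reaching a unitary, since only isometries give $\starhoms$ at each stage.
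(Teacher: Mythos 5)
Your parts (2) and (3) are essentially sound, and (3) takes the same route the paper sketches: the inverse to precomposition with $p_B$ is $[f]\mapsto[(id_{B'}\tensor m)\circ(f\tensor id_{\compacts})]$, with the two composites identified via naturality of $p$ and the stability facts from (1) and (2) (the paper itself disposes of (1) and (2) by citation to Meyer). The genuine gap is in your proof of (1). There is no norm-continuous path of isometries joining $s$ to a unitary: if $t\mapsto s_t$ is a norm-continuous family of isometries, then $t\mapsto s_ts_t^*$ is a norm-continuous family of projections, and any two projections at norm distance less than $1$ are unitarily equivalent, so $\operatorname{rank}(1-s_ts_t^*)$ is locally constant, hence constant on $[0,1]$. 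Your $s$ has $1-ss^*$ of infinite rank, while a unitary has $1-uu^*=0$, so the path you invoke cannot exist. This also means your closing remark identifies the wrong pitfall: the danger is not that the path might leave the isometries, but that norm continuity is too strong a demand.

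The standard repair weakens the continuity of the path rather than the isometry condition. What the homotopy actually requires is that $k\mapsto(t\mapsto s_tks_t^*)$ be a $\starhom$ into $\cof[I,\compacts]$, i.e.\ that $t\mapsto s_tks_t^*$ be norm-continuous for each fixed compact $k$; for this it suffices that $t\mapsto s_t$ be continuous in the strict topology of the multiplier algebra $M(\compacts)$, since
\[\|s_tks_t^*-s_{t_0}ks_{t_0}^*\|\le\|(s_t-s_{t_0})k\|+\|(s_t-s_{t_0})k^*\|.\]
In the strict topology an isometry of infinite corank can be joined to $1$ through isometries (this is precisely the content of the result the paper cites for (1) and (2)). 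With that substitution the rest of your argument for (1), and hence your deduction of (3), goes through; part (2) is unaffected, as there the path stays inside the unitary group, where norm continuity is available.
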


\begin{proof}
All three statements are well-known. Statements \eqref{ST1} and \eqref{ST2}
follow for example from \cite[Lemma 4.3]{Meyer(2000)}.
For the third statement, choose any isomorphism
$\phi: \compacts \tensor \compacts \to \compacts$.
Using \eqref{ST1} and \eqref{ST2}, one can show that
sending a \starhom\ $f:B \to B' \tensor \compacts$
to the \starhom\ $(id_{B'} \tensor \phi)\circ (f \tensor id_\compacts): B \tensor \compacts \to
B' \tensor \compacts \tensor \compacts \to B' \tensor \compacts$
induces an inverse to the map considered in \eqref{addKLHS}.
\end{proof}

\begin{proof}[Proof of Proposition \ref{checking_cofibrant_replacement}]
Use Lemma \ref{technical_stuff} and
Proposition \ref{prop_bijection_for_adjoint_of_K}
to see that there is a natural sequence of isomorphisms
\begin{eqnarray*}
[A \atensorK \compacts, B]
& \stackrel{\eqref{bijection_for_adjoint_of_K(homotopyversion)}}{\cong} & [A, B \tensor \compacts]\\
& \stackrel{({p_{B\tensor\compacts})}_*}{\cong} & [A, B \tensor \compacts \tensor \compacts]\\
& \stackrel{\eqref{bijection_for_adjoint_of_K(homotopyversion)}}{\cong} & [A \atensorK \compacts, B \tensor \compacts]\\
& \stackrel{\ref{technical_stuff} \eqref{addKLHS}}{\cong} & [(A \atensorK \compacts) \tensor \compacts, B \tensor \compacts].
\end{eqnarray*}
Thus, in order to prove Proposition \ref{checking_cofibrant_replacement}
it suffices to show that the corresponding map
\[ [(A \atensorK \compacts) \tensor \compacts, ( B \atensorK
  \compacts) \tensor \compacts] \to 
[(A \atensorK \compacts) \tensor \compacts, B \tensor \compacts]
\]
is an isomorphism.

To see this, consider the map
\begin{equation} \label{intermediate_iso}
  [ A' \tensor \compacts, B' \tensor \compacts] \to [(A' \atensorK \compacts) \tensor \compacts, B' \tensor \compacts]
\end{equation}
induced by precomposing with
$q_{A'} \tensor id_\compacts: (A' \atensorK \compacts) \tensor \compacts \to A' \tensor \compacts$.
It is a natural isomorphism since, as above, it is given through a composition of isomorphisms
\begin{eqnarray*}
[ A' \tensor \compacts, B' \tensor \compacts]
& \cong & [ A', B' \tensor \compacts] \\
& \cong & [ A', B' \tensor \compacts \tensor \compacts] \\
& \cong & [ A' \atensorK \compacts , B' \tensor \compacts] \\
& \cong & [(A' \atensorK \compacts) \tensor \compacts, B' \tensor \compacts].
\end{eqnarray*}
If we take $A'=B'=B$ then, by construction, \eqref{intermediate_iso}
maps the homotopy class $[id_{B \tensor \compacts}]$ to $[q_B \tensor id_\compacts]$.
On the other hand, if we take $A'=B$ and $B'=B \atensorK \compacts$
the isomorphism guarantees that we can pick a map
$g:B \tensor \compacts \to (B \atensorK \compacts) \tensor \compacts$
such that the composition $g \circ f$ with $f=q_B \tensor id_\compacts$ is homotopic to
 $id_{(B \atensorK \compacts) \tensor \compacts}$.
The map $g$ then is a homotopy inverse to $f$, since
the map \eqref{intermediate_iso} for $A'=B'=B$ maps
$[f \circ g]$ to $[f \circ g \circ f] = [f \circ id_{(B \atensorK \compacts) \tensor \compacts}]=[f]$,
and therefore $[f \circ g]=[id_{B \tensor \compacts}]$, using the injectivity of \eqref{intermediate_iso}.
It follows that composing with $g$ induces an isomorphism $[A', B \tensor \compacts] \to [A', (B \atensorK
  \compacts) \tensor \compacts]$ for any $A'\in \ourcat$, 
in particular for $A'=(A \atensorK \compacts) \tensor \compacts$.
\end{proof}

\begin{defn}
 \label{def_of_qA}
  For $A\in \ourcat$, let $qA$ be the kernel of the
  fold map $A \ast A \to A$.
\end{defn}

In order to prove the following two propositions
below we first need to recall the well-known fact 
that the space $Hom(qA, B \tensor \compacts)$ is a group-like $H$-space, for any choice of  $B\in\ourcat$.
The $H$-space structure is induced by the $H$-space structure of $\compacts$; and 
the structure is group-like with homotopy inverse given by the
automorphism on $qA$ induced by switching the two factors of $A \ast A$.
For a reference see \cite[Proposition 1.4]{Cuntz(1987)}, which covers the case
where $B$ is a \calgs[;] the general case then follows using the Arens-Michael decomposition theorem.

Also recall that a map $X \to X'$ of group-like $H$-spaces is a weak homotopy equivalence 
if and only if it is a $\pi_*$-isomorphism (with the existing basepoint). 

\begin{prop}\label{reduce_to_pi-iso}
Let $A\in\calg$, and let $B \to B'$ a morphism in $\ourcat$. Then the induced map
\begin{equation}
Hom(qA \atensorK \compacts, B) \to Hom(qA \atensorK \compacts, B')
\end{equation}
is a weak equivalence if and only if the map 
$Hom(qA, B \tensor \compacts) \to Hom(qA, B' \tensor \compacts)$
is a $\pi_*$-isomorphism.
\end{prop}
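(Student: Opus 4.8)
The plan is to reduce the claim to a two-out-of-three argument using the naturality square supplied by the adjunction bijection of Proposition \ref{prop_bijection_for_adjoint_of_K}. First I would form, for the given morphism $B \to B'$, the commutative square
\[
\xymatrix{
Hom(qA \atensorK \compacts, B) \ar[r] \ar[d] & Hom(qA \atensorK \compacts, B') \ar[d] \\
Hom(qA, B \tensor \compacts) \ar[r] & Hom(qA, B' \tensor \compacts)
}
\]
whose horizontal arrows are induced by $B \to B'$ and whose vertical arrows are the natural bijections \eqref{bijection_for_adjoint_of_K} taken with first slot $qA$ and second slots $B$ and $B'$. Commutativity is exactly the naturality of \eqref{bijection_for_adjoint_of_K} in the second variable.

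Next I would note that $qA \in \calg$ whenever $A \in \calg$, since $qA$ is a closed ideal in the free product $A \ast A$; this is precisely what licenses an appeal to Proposition \ref{tensorK_adjoint_is_weq}, which asserts that the two vertical arrows are weak homotopy equivalences. Once the verticals are known to be weak equivalences, the two-out-of-three property forces the top horizontal map to be a weak homotopy equivalence if and only if the bottom horizontal map is.

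Finally, I would convert the condition on the bottom row from ``weak equivalence'' to ``$\pi_*$-isomorphism.'' As recalled just before the statement, both $Hom(qA, B \tensor \compacts)$ and $Hom(qA, B' \tensor \compacts)$ are group-like $H$-spaces for the structure coming from $\compacts$, and the bottom map, being induced by $B \to B'$ (hence by $g \tensor id_\compacts$ on postcomposition), leaves the $\compacts$ factor untouched and is therefore an $H$-map respecting basepoints. Invoking the recalled fact that a map of group-like $H$-spaces is a weak homotopy equivalence exactly when it is a $\pi_*$-isomorphism, the bottom map is a weak equivalence iff it is a $\pi_*$-isomorphism. Chaining this with the previous equivalence gives the proposition.

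The step I expect to be the genuine content rather than pure formality is verifying that the hypotheses of the two imported results actually hold on the nose here, namely that $qA$ really lies in $\calg$ so that Proposition \ref{tensorK_adjoint_is_weq} applies, and that the bottom map is honestly a based $H$-map so that the $H$-space criterion applies; granting these, the remainder is a routine two-out-of-three deduction.
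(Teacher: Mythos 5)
Your proposal is correct and follows essentially the same route as the paper, which disposes of the statement in one line by citing Proposition \ref{tensorK_adjoint_is_weq} together with the recalled fact that $Hom(qA, B \tensor \compacts) \to Hom(qA, B' \tensor \compacts)$ is a map of group-like $H$-spaces; your naturality square, two-out-of-three step, and explicit check that $qA \in \calg$ simply spell out what the paper leaves implicit.
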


\begin{proof} This immediately follows from Proposition \ref{tensorK_adjoint_is_weq}
and the fact that the map $Hom(qA, B \tensor \compacts) \to Hom(qA, B' \tensor \compacts)$
is a map of $H$-spaces.
\end{proof}

\begin{prop}\label{verify_cofibrant_replacement}
For $A\in\calg, B\in \ourcat$ the map induced by composing with $q_B: B \atensorK \compacts \to B$ 
(defined in \ref{introduce_p_C})
\[Hom(qA \atensorK \compacts, B \atensorK \compacts) \to Hom(qA \atensorK \compacts,B)\]
is a weak homotopy equivalence.
\end{prop}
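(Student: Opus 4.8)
The plan is to reduce the assertion to a homotopy equivalence that is essentially already constructed inside the proof of Proposition \ref{checking_cofibrant_replacement}, and then to transport that equivalence through the mapping space functor $Hom(qA,-)$. The key observation is that while Proposition \ref{checking_cofibrant_replacement} only records an isomorphism on homotopy \emph{classes}, its proof actually manufactures an honest homotopy inverse, which is exactly the extra input needed to upgrade from $\pi_0$ to a full weak equivalence.

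First I would invoke Proposition \ref{reduce_to_pi-iso} with the morphism $q_B\colon B \atensorK \compacts \to B$ in the role of the map $B \to B'$ (so that the proposition's ``$B$'' becomes $B \atensorK \compacts$ and its ``$B'$'' becomes $B$). This converts the claim that
\[ Hom(qA \atensorK \compacts, B \atensorK \compacts) \to Hom(qA \atensorK \compacts, B) \]
is a weak equivalence into the assertion that the map
\[ Hom(qA, (B \atensorK \compacts)\tensor \compacts) \to Hom(qA, B \tensor \compacts) \]
induced by $q_B \tensor id_\compacts$ is a $\pi_*$-isomorphism. (At bottom this is the naturality square for the weak equivalences \eqref{bijection_for_adjoint_of_K} of Proposition \ref{tensorK_adjoint_is_weq} applied to $q_B$, combined with two-out-of-three, so post-composition with $q_B$ on $Hom(qA\atensorK\compacts,-)$ corresponds precisely to post-composition with $q_B\tensor id_\compacts$ on $Hom(qA,-\tensor\compacts)$.)

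Second, I would note that $q_B \tensor id_\compacts$ is not merely a $KK$-equivalence but an actual homotopy equivalence in $\ourcat$. This is what the proof of Proposition \ref{checking_cofibrant_replacement} produces: using the natural isomorphism \eqref{intermediate_iso}, one obtains a \starhom\ $g\colon B \tensor \compacts \to (B \atensorK \compacts)\tensor\compacts$ with $g \circ (q_B \tensor id_\compacts) \simeq id$ (from surjectivity) and $(q_B \tensor id_\compacts)\circ g \simeq id$ (from injectivity), so that $q_B \tensor id_\compacts$ and $g$ are mutually inverse up to homotopy of \starhoms. Third, I would argue that $Hom(qA,-)$ sends homotopy equivalences of $\ourcat$ to homotopy equivalences of spaces, hence to $\pi_*$-isomorphisms: a homotopy of \starhoms\ $C \to C'$ is a \starhom\ $H\colon C \to \cof[I,C']$ with $\cof[I,C'] \in \ourcat$ by Lemma \ref{cin}, and composing $H_*$ with the canonical topological inclusion $Hom(qA,\cof[I,C']) \to \cof[I, Hom(qA,C')]$ of Proposition \ref{cadjoint} yields a genuine homotopy between the induced maps of spaces. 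Applying this to the two homotopies of the previous step with $f = q_B \tensor id_\compacts$ shows $f_*$ is a homotopy equivalence of spaces, in particular a $\pi_*$-isomorphism, which is what the reduction demanded.

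The main obstacle is the homotopy invariance of the third step: one must check that $Hom(qA,-)$ really does carry the \starhom-level homotopies witnessing $q_B\tensor id_\compacts \simeq$ an isomorphism to honest homotopies of continuous maps, and it is worth emphasizing that this uses only the always-available topological \emph{inclusion} of Proposition \ref{cadjoint}, not its homeomorphism form, so no compact generation hypothesis on $qA$ enters at this stage (any such hypothesis is absorbed into the cited Proposition \ref{reduce_to_pi-iso}). Everything else is bookkeeping: identifying the reduction correctly and quoting the homotopy inverse already built in the proof of Proposition \ref{checking_cofibrant_replacement}.
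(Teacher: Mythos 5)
Your proof is correct, but the key step is carried out by a genuinely different route than the paper's. Both arguments begin identically: Proposition \ref{reduce_to_pi-iso} applied to $q_B$ reduces the claim to showing that $Hom(qA,(B\atensorK\compacts)\tensor\compacts)\to Hom(qA,B\tensor\compacts)$ is a $\pi_*$-isomorphism. From there the paper stays at the level of homotopy \emph{classes}: it constructs, for each $n$, a natural bijection $\pi_n(Hom(qA,B'\tensor\compacts))\cong[(qA\ptatimes S^n)\atensorK\compacts,B']$ (via the chain \eqref{bijection_for_adjoint_of_K}, \eqref{bijection_almostadjoint}, \eqref{bijection_cadjoint_pointed} and an identification of $\pi_n$ with right homotopy classes) and then quotes the \emph{statement} of Proposition \ref{checking_cofibrant_replacement} for the varying source algebras $qA\ptatimes S^n$. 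You instead open up the \emph{proof} of Proposition \ref{checking_cofibrant_replacement}, extract the honest homotopy inverse $g$ to $q_B\tensor id_\compacts$ that it builds there via \eqref{intermediate_iso}, and push the two \starhom-level homotopies through $Hom(qA,?)$ using the topological inclusion of Proposition \ref{cadjoint} together with the exponential law for the compact exponent $I$; both of these checks are sound, and you are right that only the inclusion (not the homeomorphism) form of Proposition \ref{cadjoint} is needed and that the resulting homotopies are stationary at the zero basepoint, so they are pointed. Your route buys a cleaner conceptual statement --- the map being tested is an actual homotopy equivalence in $\ourcat$, so any homotopy-invariant mapping-space functor sends it to a weak equivalence --- and it avoids the $[S^n,?]_R$ bookkeeping; its cost is that it leans on an intermediate construction buried inside another proof rather than on a quotable statement, so if you wrote this up you should isolate ``$q_B\tensor id_\compacts$ is a homotopy equivalence'' as a separate lemma.
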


\begin{proof}
By the previous proposition it suffices to show that the 
corresponding map $Hom(qA, (B \atensorK \compacts) \tensor \compacts) \to 
Hom(qA, B \tensor \compacts)$ is a $\pi_*$-isomorphism.

Note that for any $Y\in \ptcpt$ and a general $B' \in \ourcat$ 
we have the following natural composition of bijections
\begin{multline*}
     Hom((qA \ptatimes Y)\atensorK \compacts, B')
\stackrel{\eqref{bijection_for_adjoint_of_K}}{\cong}  Hom(qA \ptatimes Y, B' \tensor \compacts)\\
\stackrel{\eqref{bijection_almostadjoint}}{\cong}  Hom(qA, \ptcof{Y,B' \tensor \compacts})
\stackrel{\eqref{bijection_cadjoint_pointed}}{\cong}  \ptcof{Y,Hom(qA, B' \tensor \compacts)}.
\end{multline*}
Using naturality in $Y$ it follows that
for any $n\ge 0$ there is a canonical bijection
\[ [(qA \ptatimes S^n) \atensorK \compacts ,B' ] \cong [S^n, Hom(qA, B' \tensor \compacts)]_R,\]
where the right hand side denotes right homotopy classes of maps, i.e.~two pointed maps
$f,g:S^n \to Hom(qA, B' \tensor \compacts)$ are equivalent if there is a 
corresponding pointed map $H: S^n \to Hom(qA, \cof[I,B' \tensor \compacts])$ such that the maps
$S^n \to Hom(qA, B' \tensor \compacts)$ obtained from $H$ by composition with the evaluation maps
$\cof[I,B' \tensor \compacts] \to B' \tensor \compacts$ at $0$ and $1$ respectively are
the maps $f$ and $g$. 
However, using Lemma \ref{steenrods} and Proposition \ref{cadjoint} one gets
$\pi_n(Hom(qA, B' \tensor \compacts)) \cong [S^n, Hom(qA, B' \tensor \compacts)]_R$.
Thus, taken together we have canonical isomorphisms
\[ \pi_n(Hom(qA, B' \tensor \compacts)) \cong [(qA \ptatimes S^n)\atensorK \compacts, B'].\]
The statement of the proposition then follows from Proposition
\ref{checking_cofibrant_replacement}, applied to the $qA \ptatimes S^n$ for the various $n\in \N$.
\end{proof}

\section{The Seminorm Extension Property}
\label{studysep}

The point of this section is to introduce a class of maps which will
play roughly the role of embeddings in topological categories.

\begin{defn}
Let $f: A\to B$ be a continuous \starhom\ between 
\lmccstar-algebras. We say
$f$ has the seminorm lifting property if for any continuous
sub-multiplicative \cstar-seminorm $p\in S(A)$ there is a continuous
sub-multiplicative \cstar-seminorm $q\in S(B)$ such that $p=f^*q$.
In other words $f$ has the semi-norm extension property if and only if
$S(A) = f^*S(B)$.
\end{defn}

First, we have the key property of maps with the \SEP[], which will
allow the so-called small object argument to proceed in Section
\ref{lifting}.

\begin{lemma}
        \label{allsmall}
        All objects in $\LMCCSTAR$ are small with respect to
        \starhoms which have the \SEP[].
\end{lemma}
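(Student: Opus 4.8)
The plan is to verify the definition of smallness directly. Fix an object $C \in \LMCCSTAR$ and let $\kappa$ be any infinite cardinal with $\kappa \ge |C|$, where $|C|$ is the cardinality of the underlying set of $C$. I would then show that $C$ is $\kappa$-small relative to the \starhoms\ having the \SEP[]: for every $\kappa$-filtered ordinal $\lambda$ and every $\lambda$-sequence $A_0 \to A_1 \to \cdots$ of \starhoms\ with the \SEP[], writing $A_\infty = \colim_\beta A_\beta$, $g_\beta : A_\beta \to A_\infty$ for the canonical maps and $f_\gamma : A_\gamma \to A_{\gamma+1}$ for the transition maps, the comparison map $\colim_\beta \Hom(C, A_\beta) \to \Hom(C, A_\infty)$ is a bijection. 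By Lemma \ref{colimits} the colimit $A_\infty$ is computed in $\LMCCSTAR$ as the Hausdorff quotient of the algebraic colimit equipped with the finest $\lmccstar$-topology making each $g_\beta$ continuous; in particular $S(A_\infty) = \{ p : g_\beta^* p \in S(A_\beta) \text{ for all } \beta \}$.

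The heart of the argument, and the step I expect to be the main obstacle, is to isolate two structural features of colimits taken along a tower of \SEP[]-maps. First, I would show that \emph{seminorms extend up the tower}: for each $\beta$ and each $q \in S(A_\beta)$ there is a $\hat q \in S(A_\infty)$ with $g_\beta^* \hat q = q$. This is established by transfinite recursion producing a compatible family $(q_\gamma)_{\gamma \ge \beta}$ with $q_\beta = q$ and $f_\gamma^* q_{\gamma+1} = q_\gamma$; the successor step is exactly the defining property $S(A_\gamma) = f_\gamma^* S(A_{\gamma+1})$ of the \SEP[], and at a limit stage $\gamma$, where $A_\gamma = \colim_{\delta < \gamma} A_\delta$, the family $(q_\delta)_{\delta < \gamma}$ already assembled determines a \cstar-seminorm $q_\gamma$ on $A_\gamma$ lying in $S(A_\gamma)$ by the stagewise description of its seminorms. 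The family $(q_\gamma)$ then descends to the desired $\hat q$ on $A_\infty$. Second, using this extension I would show there is \emph{no Hausdorff collapse}: if $a \in A_\beta$ satisfies $g_\beta(a) = 0$ in $A_\infty$, then $q(a) = \hat q(g_\beta(a)) = 0$ for every $q \in S(A_\beta)$, forcing $a = 0$ since $A_\beta$ is Hausdorff. Consequently the underlying $*$-algebra of $A_\infty$ is precisely the algebraic colimit $\colim_\beta A_\beta$, so two elements of some $A_\beta$ agree in $A_\infty$ exactly when they agree at a finite stage.

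With these facts in hand, injectivity of the comparison map is purely algebraic. Given $\tilde\phi, \tilde\psi : C \to A_\beta$ that become equal after composing with $g_\beta$, the previous paragraph shows $\tilde\phi(c)$ and $\tilde\psi(c)$ already agree at some stage $\beta_c \ge \beta$ for each $c \in C$. Since there are at most $\kappa$ elements $c$ and $\lambda$ is $\kappa$-filtered, the $\beta_c$ are bounded by a single $\beta' < \lambda$, at which $\tilde\phi$ and $\tilde\psi$ become equal; hence they represent the same element of $\colim_\beta \Hom(C, A_\beta)$.

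For surjectivity, let $\phi : C \to A_\infty$ be given. Because the underlying $*$-algebra of $A_\infty$ is the algebraic colimit and $C$ has at most $\kappa$ elements, the standard smallness of $C$ in the category of $*$-algebras applies: choosing a stage for the image of each generator and for each relation witnessing that the assignment is a homomorphism, and bounding these at most $\kappa$ stages by a single $\beta^* < \lambda$ via $\kappa$-filteredness, produces a $*$-algebra homomorphism $\tilde\phi : C \to A_{\beta^*}$ with $g_{\beta^*} \circ \tilde\phi = \phi$. It remains only to check that $\tilde\phi$ is continuous, and here the \SEP[]\ is decisive: for any $q \in S(A_{\beta^*})$ choose $\hat q \in S(A_\infty)$ with $g_{\beta^*}^* \hat q = q$, so that $\tilde\phi^* q = \tilde\phi^* g_{\beta^*}^* \hat q = \phi^* \hat q \in S(C)$ by continuity of $\phi$. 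Thus $\tilde\phi$ is a morphism of $\LMCCSTAR$ representing a preimage of $\phi$, which completes the verification and hence the proof.
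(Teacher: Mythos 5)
Your proof is correct and follows essentially the same route as the paper's: the paper isolates your two structural facts as Lemma \ref{sepincl} (the \SEP[] forces a topological inclusion) and Lemma \ref{embedcolim} (the transfinite seminorm-extension argument showing the colimit of a $\lambda$-sequence of \SEP[] maps has the union of the stages as its underlying set, with each stage mapping in with the \SEP[]), and then reduces to the well-known smallness of sets with respect to inclusions. Your inline verification that the set-theoretic factorization through a stage is automatically continuous, via pulling back an extended seminorm, is exactly the content the paper extracts from those two lemmas.
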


Before giving a proof of Lemma \ref{allsmall}, we need a better
understanding of the \SEP[].

\begin{lemma}
  \label{sepincl}
        Let $A,B$ be \lmccstar-algebras.
        If a \starhom\ $f: A\to B$ has the \SEP[,]
        then $f$ is a topological inclusion.
\end{lemma}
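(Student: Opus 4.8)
The plan is to show that a \starhom\ $f\colon A\to B$ with the \SEP[]\ is a topological inclusion, meaning $f$ is injective and the subspace topology on $f(A)$ (induced from $B$) coincides with the given topology on $A$. Since the topologies on both \lmccstar-algebras are determined by their continuous \cstar-seminorms, I would translate everything into statements about seminorms. Recall that the topology on $A$ is $\tau_{S(A)}$ and the topology on $B$ is $\tau_{S(B)}$, and the \SEP[]\ gives the equality $S(A)=f^*S(B)$, i.e.~every $p\in S(A)$ is of the form $f^*q$ for some $q\in S(B)$, and conversely every pullback $f^*q$ is continuous on $A$ hence lies in $S(A)$.

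First I would verify \emph{injectivity}. Since $A$ is an \lmccstar-algebra, its topology is Hausdorff, which means that for any nonzero $a\in A$ there is some $p\in S(A)$ with $p(a)\neq 0$. By the \SEP[], $p=f^*q$ for some $q\in S(B)$, so $q(f(a))=p(a)\neq 0$, forcing $f(a)\neq 0$. Hence $\ker f=0$ and $f$ is injective.

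Next I would check that $f$ is a \emph{topological embedding}. Continuity of $f$ is already part of the hypothesis. For the reverse direction, I must show the given topology on $A$ is no finer than the topology pulled back from $B$ along $f$; equivalently, that a subbasic open neighborhood of $0$ in $A$ is the $f$-preimage of an open set in $B$. A subbasic neighborhood of $0$ in $A$ has the form $\{a : p(a)<r\}$ for some $p\in S(A)$ and $r>0$. Using the \SEP[]\ to write $p=f^*q$, this set equals $\{a : q(f(a))<r\}=f^{-1}(\{b : q(b)<r\})$, and $\{b:q(b)<r\}$ is open in $B$ since $q\in S(B)$ is continuous. Thus every subbasic open set of $A$ is the preimage of an open set in $B$, which (together with injectivity and continuity) shows that $f$ is a homeomorphism onto its image, i.e.~a topological inclusion.

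The argument is essentially a direct unwinding of the definitions, so I do not expect a serious obstacle; the one point requiring a little care is ensuring that the seminorm description genuinely controls the topology on both sides. Specifically, I rely on the fact (from the definition of $\tau_S$ and the sets $U_{p,r,f}$ appearing earlier) that the sets $\{a:p(a)<r\}$ with $p\in S(A)$ form a subbasis at $0$, so that checking the preimage condition on these subbasic sets suffices. The main subtlety, if any, is the Hausdorffness step used for injectivity, which is guaranteed because objects of $\LMCCSTAR$ carry Hausdorff \lmccstar-topologies by definition.
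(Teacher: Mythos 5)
Your proof is correct and follows essentially the same route as the paper: injectivity via a separating seminorm pulled back through the \SEP[], and the embedding statement by identifying the topology of $A$ with the topology pulled back from $B$, using $S(A)=f^*S(B)$. The paper states the second step more compactly as $A_{f^*S(B)}=A_{S(A)}=A$, whereas you unwind it on subbasic neighborhoods, but the content is identical.
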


\begin{proof}
  If $x \neq y \in A$, then there exists $p \in S(A)$ with
  $p(x-y)\neq0$ because $A$ carries a Hausdorff \lmccstar-topology.
  Since $f$ has
  the \SEP[], there exists $q \in S(B)$ with $f^*q=p$.  Thus,
  $qf(x-y)=p(x-y)\neq0$ so $f(x)\neq f(y)$ in $B$, or $f$ is an
  injection.

  To see that $f$ is an inclusion, note that $A$ equipped with the
  topology pulled back from $B$ over $f$ will be $A_{f^*S(B)}$.
  However, since $f$ has the \SEP we have $f^*S(B)=S(A)$ which implies
  $A_{f^*S(B)}=A_{S(A)}=A$, the last equality from the fact that $A$
  is an \lmccstar-algebra.
\end{proof}

\begin{lemma}
        \label{embedcolim}
        Let $\lambda$ be an ordinal and suppose
        \[A_0\to A_1\to A_2 \to\dots A_\alpha \to\dots\]
        is a $\lambda$-sequence\footnote{For a definition of
          ``$\lambda$-sequence into a cocomplete category'' we refer the reader to
          \cite[2.1.1.]{Hovey}}
        over \starhoms\
        $A_\alpha \to A_{\alpha+1}$ in $\LMCCSTAR$ which have the
        \SEP[].  Then each
        $A_\alpha \to \underset{\alpha < \lambda}{\colim} A_\alpha$ has the
          \SEP and the
        underlying set of the colimit is the union of the underlying
        sets of the $A_\alpha$.\label{nicecolim}
\end{lemma}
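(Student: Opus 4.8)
The plan is to prove the statement in two coordinated pieces: first identify the underlying set of the colimit as the union of the underlying sets, and then use that concrete description to verify the \SEP[] for each structure map into the colimit. These two facts reinforce each other, so I would actually establish the set-level description first and then leverage it for the seminorm claim.

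\textbf{Setting up the underlying set.} First I would recall from Lemma \ref{colimits} that colimits in $\LMCCSTAR$ are built by taking the appropriate colimit of underlying $*$-algebras and equipping it with the $\lmccstar$-topology determined by all the $\cstar$-seminorms that factor compatibly through the cone. For a $\lambda$-sequence, the colimit of the underlying $*$-algebras is the ordinary directed union, so as a \emph{set} (before worrying about the topology) the colimit of the algebraic diagram already is $\bigcup_{\alpha < \lambda} A_\alpha$, with the natural maps being inclusions. The content of the lemma at the set level is therefore that passing from the $\lmccstar$-algebra $\colim A_\alpha$ to its object in $\LMCCSTAR$ does not collapse any points, i.e.~the Hausdorff quotient does not identify distinct elements. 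Equivalently, I must show the topology on the colimit is Hausdorff without passing to a further quotient, and this is exactly where the \SEP[] on the bonding maps is used.

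\textbf{Lifting seminorms through the sequence.} The key construction is to take a seminorm $p \in S(A_\alpha)$ and extend it coherently along the tail of the sequence. Since $A_\alpha \to A_{\alpha+1}$ has the \SEP[], there is $p_{\alpha+1} \in S(A_{\alpha+1})$ with $p = (A_\alpha \to A_{\alpha+1})^* p_{\alpha+1}$; iterating, I build a compatible family $p_\beta \in S(A_\beta)$ for all $\beta \ge \alpha$, pulling back at successor steps and, at limit stages, using that a seminorm compatible on a cofinal set of predecessors determines one on the limit-stage term (this uses the directed-colimit description of $A_\beta$ at limit ordinals together with the \SEP[] being preserved under the relevant composites). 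This compatible family assembles into a single $\cstar$-seminorm $\hat{p}$ on the algebraic union $\bigcup_\beta A_\beta$, which is continuous for the colimit topology and restricts to $p$ on $A_\alpha$. This simultaneously shows that distinct points of the union are separated (giving the Hausdorff/set claim, hence the union description) and exhibits every $p \in S(A_\alpha)$ as a pullback $(A_\alpha \to \colim A_\alpha)^* \hat{p}$, which is precisely the \SEP[] for the structure map.

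\textbf{The main obstacle.} I expect the genuinely delicate point to be the behavior at \emph{limit ordinals} $\beta < \lambda$. At successor stages the \SEP[] hypothesis applies directly, but at a limit stage $A_\beta = \colim_{\gamma < \beta} A_\gamma$ I must produce a seminorm on $A_\beta$ restricting compatibly to the already-constructed $p_\gamma$, and I must confirm that the resulting map $A_\beta \to A_{\beta+1}$ still has the \SEP[] so the induction can continue --- in effect I need that a colimit over a shorter $\lambda$-sequence of \SEP[] maps again maps into the next term via an \SEP[] map. The cleanest route is a transfinite induction on $\beta$ whose inductive hypothesis is the full statement of the lemma for all shorter sequences: the union-of-sets description at limit stages then lets me define $p_\beta$ coordinatewise on $\bigcup_{\gamma<\beta} A_\gamma$ and check it is a well-defined continuous $\cstar$-seminorm. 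Verifying that this candidate is genuinely continuous and sub-multiplicative, rather than merely a consistent family of values, is the technical heart of the argument, and it is where I would spend the most care.
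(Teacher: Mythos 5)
Your proposal is correct and follows essentially the same route as the paper's proof: both identify the algebraic colimit with the union $\bigcup_{\alpha<\lambda}A_\alpha$ and then extend a given continuous $\cstar$-seminorm along the tail by transfinite induction --- using the \SEP[] at successor stages and the universal property of the union at limit stages --- which simultaneously yields Hausdorffness (hence the set-level claim) and the \SEP[] for each map into the colimit. The only minor difference is that your worry about the map $A_\beta \to A_{\beta+1}$ at a limit ordinal $\beta$ retaining the \SEP[] is unnecessary, since that map is part of the given $\lambda$-sequence and has the \SEP[] by hypothesis.
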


\begin{proof}
  By the previous lemma, all structure maps in the colimit are
  (topological) inclusions. Hence the colimit of the sequence in the category
  of associative complex \staralgs\ is given by the union
  $A=\union_{\alpha<\lambda} A_\alpha$. On this union, there is a
  coarsest \lmccstar-topology and the colimit in \LMCCSTAR\ then is given
  by taking the Hausdorff quotient of $A$ with respect to this topology.
  We now show that the topology on the union $A$ is already Hausdorff.

  We claim that any two distinct points in $A$ are separated by a
  seminorm.  To see
  this, let $x \in A_{\alpha'}$ and $y \in A_{\alpha}$ denote two
  distinct points of the union.  Without loss of generality, assume
  $\alpha \geq \alpha'$ so $x \in
  A_\alpha$ as well.  Since $A_\alpha$ is a Hausdorff $\lmccstar$-algebra,
  there is a $\cstar$-seminorm $p \in S(A_\alpha)$ with $p(x-y)\neq0$. 
  We extend this to $q \in S(A)$ by transfinite induction.
  By assumption,
  we can extend any $p_\alpha \in S(A_\alpha)$ to
  $p_{\alpha+1} \in S(A_{\alpha+1})$ and
  this handles successor ordinals.  Now, for a limit ordinal $\beta <
  \lambda$, we
  define a seminorm on the union $\union_{\alpha < \beta} A_\alpha$
  using the
  universal property of the union with respect to continuous maps and
  the continuous $\cstar$-seminorms already defined.  This yields a
  continuous map $\union_{\alpha < \beta} A_\alpha \to \R$ which acts
  as a seminorm since each pair
  of points in the union lies in some $A_\alpha$ as above, thereby completing
  the transfinite induction. 

  We now have $q \in S(A)$ with $q(x-y)=p(x-y)\neq0$ for any pair of
  distinct points in $A$.
  Hence, $A_{S(A)}$ is a Hausdorff space, so taking the Hausdorff quotient is
  not necessary in constructing the colimit in $\ourcat$.  This
  implies the underlying set of the colimit in $\ourcat$ is precisely
  $A$, the union of the sets $A_\alpha$.  It also implies the
  extension $q \in S(A)$ of $p \in S(A_\alpha)$ remains a continuous
  $\cstar$-seminorm
  on the colimit in $\ourcat$, so each
\[A_\alpha \to \underset{\alpha < \lambda}{\colim} A_\alpha
\]
  has the \SEP[].
\end{proof}

We can now give a proof of Lemma \ref{allsmall}.

\begin{proof}[Proof of Lemma \ref{allsmall}]
        By the combination of Lemmas \ref{sepincl} and \ref{embedcolim}
        each of the morphisms in a $\lambda$-sequence
        $A_\alpha \to \underset{\alpha < \lambda}{\colim} A_\alpha$
        is a topological inclusion. Hence it suffices to see that
        any object $B \in \ourcat$, regarded as a set, is small with
        respect to set inclusions and that
        the underlying set of such a colimit is the colimit of the
        underlying sets.  The former fact is well known (see
        \cite[Example 2.1.5]{Hovey}), while the
        latter is part of Lemma \ref{nicecolim}.
\end{proof}

We can now verify the version of Lemma \ref{allsmall} we will use to
perform our small object arguments.

\begin{lemma}
        \label{allsmall_in_ourcat}
        All objects in $\ourcat$ are small with respect to
        \starhoms which have the \SEP[].
\end{lemma}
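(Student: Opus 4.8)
The plan is to run essentially the same reduction as in the proof of Lemma \ref{allsmall}, the only genuinely new ingredient being control over the $\nu$-completion that intervenes when forming colimits in $\ourcat$: recall that, by Lemma \ref{colimits}, a colimit in $\ourcat$ is the $\nu$-completion of the corresponding colimit in $\LMCCSTAR$. Fix $B\in\ourcat$. Since $\ourcat$ is a full subcategory of $\LMCCSTAR$, Lemma \ref{allsmall} supplies a cardinal $\kappa_0$ witnessing the smallness of $B$ in $\LMCCSTAR$ relative to \starhoms\ with the \SEP[]. I would take the witnessing cardinal for $\ourcat$ to be $\kappa=\max(\kappa_0,|\nu|^+)$, so that every regular $\lambda\ge\kappa$ has $\operatorname{cf}(\lambda)=\lambda>|\nu|$.

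First I would check that, for a $\lambda$-sequence $(A_\alpha)_{\alpha<\lambda}$ in $\ourcat$ whose successor maps $A_\alpha\to A_{\alpha+1}$ have the \SEP[], every seminorm $p\in S(A_\alpha)$ extends along the whole chain up to $\union_{\alpha<\lambda}A_\alpha$. This is the transfinite seminorm-extension argument already carried out in the proof of Lemma \ref{embedcolim}, with one extra bookkeeping step at each limit ordinal $\beta$: there $A_\beta=\complete{\union_{\gamma<\beta}A_\gamma}$, so a seminorm defined on the algebraic union $\union_{\gamma<\beta}A_\gamma$ must be pushed across the dense inclusion $\union_{\gamma<\beta}A_\gamma\subset\complete{\union_{\gamma<\beta}A_\gamma}$, which it does uniquely since continuous $\cstar$-seminorms are $1$-Lipschitz and hence extend by continuity to the $\nu$-completion. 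Consequently each $A_\alpha\to\colim^{\LMCCSTAR}_{\alpha<\lambda}A_\alpha$ has the \SEP[], so by Lemma \ref{sepincl} these maps are topological inclusions and the $\LMCCSTAR$-colimit has underlying set exactly the union $A=\union_{\alpha<\lambda}A_\alpha$ equipped with the coarsest \lmccstar-topology.

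The crucial step is then to show that for $\operatorname{cf}(\lambda)>|\nu|$ the union $A$ is \emph{already} $\nu$-complete, so that the colimit in $\ourcat$, namely $\complete{A}$, coincides with the $\LMCCSTAR$-colimit and has underlying set $A$. Given a Cauchy $\nu$-sequence $x:\nu\to A$, its image lies in $\union_{\gamma\in I}A_\gamma$ for a set of indices $I$ with $|I|\le|\nu|<\operatorname{cf}(\lambda)$; hence $\sup I=\alpha_0<\lambda$ and, the chain being increasing, $x$ factors through $A_{\alpha_0}$. As $A_{\alpha_0}\hookrightarrow A$ is a topological inclusion, $x$ is Cauchy in the $\nu$-complete algebra $A_{\alpha_0}\in\ourcat$, so it converges there and therefore in $A$. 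Thus $\complete{A}=A$, and the underlying set of $\colim^{\ourcat}_{\alpha<\lambda}A_\alpha$ is the union of the underlying sets of the $A_\alpha$.

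With this established the conclusion is formally identical to the proof of Lemma \ref{allsmall}. For regular $\lambda\ge\kappa$ the structure maps are set-inclusions and the underlying set of the $\ourcat$-colimit is their union, so any continuous \starhom\ $B\to\colim_\alpha A_\alpha$ factors on underlying sets through some $A_{\alpha_0}$, because $|B|$ is small with respect to set-inclusions (\cite[Example 2.1.5]{Hovey}); since $A_{\alpha_0}$ carries the subspace topology and is a \substaralg, this set map is automatically a continuous \starhom\ into $A_{\alpha_0}$. Hence $\colim_\alpha\Hom(B,A_\alpha)\to\Hom(B,\colim_\alpha A_\alpha)$ is a bijection, which is precisely the smallness of $B$ in $\ourcat$. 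I expect the main obstacle to be the third step: guaranteeing that the $\nu$-completion enlarges neither the seminorms nor the underlying set of the colimit. It is exactly this confinement phenomenon that forces the cofinality restriction $\operatorname{cf}(\lambda)>|\nu|$, and hence the enlarged bound $\kappa$; everything else is a transcription of the argument for $\LMCCSTAR$.
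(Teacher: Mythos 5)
Your proof is correct and rests on the same key idea as the paper's: a Cauchy $\nu$-sequence meets only $|\nu|$-many stages, so for sufficiently filtered $\lambda$ it is confined to (and converges in) a single $\nu$-complete $A_\gamma$, which neutralizes the $\nu$-completion in the $\ourcat$-colimit. The paper applies this bound directly to the $|B|\times|\nu|$-many representatives of the elements of $f(B)$ rather than first proving the union is already $\nu$-complete, but this is only a repackaging of the same cofinality argument.
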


\begin{proof}  
        Let $\lambda$ be an ordinal and suppose
        \[A_0\to A_1\to A_2 \to\dots A_\alpha \to\dots\]
        is a $\lambda$-sequence
        over \starhoms\
        $A_\alpha \to A_{\alpha+1}$ in $\ourcat$ which have the
        \SEP[]. Let us write $C$ for the colimit over this sequence
        taken in the category \LMCCSTAR. The colimit in $\ourcat$ then
        is the $\nu$-completion $\complete{C}$ of $C$.
        In particular, the maps from $A_\alpha$ into the
        colimit $\complete{C}$ are all inclusions.
       
        The elements of $\complete{C}$ by definition can be represented by
        Cauchy $\nu$-sequences of elements of $C$.
       
        For any \starhom\ $f:B \to \complete{C}$ and $b\in B$
        let $(f_{b,i})_{i\in\nu}$ be a Cauchy sequence in $C$ which represents
        $f(b)$. By Lemma \ref{embedcolim}, for each pair $(b,i)$
        we can choose an ordinal $\alpha_{b,i} < \lambda$
        such that $f_{b,i}\in A_{\alpha_{b,i}}$.
        Now assume that $\lambda$ is $|B| \times |\nu|$-filtered
        (see \cite[Definition 2.1.2]{Hovey}).
        Let $\gamma= \sup\{ \alpha_{b,i}\ |\ (b,i)\in B \times \nu\}$, so
        $\gamma < \lambda$ and $f(B) \subset A_\gamma$. Since $f$ was arbitrary, we conclude
        $Hom(B, \complete{C}) \cong \colim_\alpha Hom(B, A_\alpha)$.
\end{proof}

\begin{remark}\label{explain_why_lambda_completion_is_used}
The previous lemma is the reason why we work with $\nu$-complete
\lmccstar-algebras instead of complete \lmccstar-algebras.
Roughly speaking, it was crucial in the proof that we
have an a priori set theoretic bound for the length of
the Cauchy nets we are considering for the completion.
\end{remark}

To apply Lemma \ref{allsmall}, we
will need to know certain formal constructions preserve the \SEP[].
Lemma \ref{nicecolim} already says transfinite compositions preserve
the \SEP and the first lemma below gives a left cancellation result.

\begin{lemma}
        \label{firstfactor}
        Suppose $f:A \to B$ and $g:B \to C$ are continuous \starhoms[].
        Then $gf:A \to C$ having the \SEP implies
        that $f$ has the \SEP[].
\end{lemma}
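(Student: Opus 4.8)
The plan is to exploit the contravariant functoriality of the pullback operation on \cstar-seminorms; once that is set up, the statement is a purely formal left-cancellation property. First I would record the routine observation that pulling back preserves the relevant class of seminorms: if $h:\,Y\to Z$ is a continuous \starhom\ of \lmccstar-algebras and $r$ is a continuous submultiplicative \cstar-seminorm on $Z$, then $h^*r = r\circ h$ is again a continuous submultiplicative \cstar-seminorm on $Y$. Indeed, continuity of $h^*r$ follows from continuity of $h$ and $r$, submultiplicativity from $h$ being multiplicative, and the \cstar-identity from $h$ respecting the involution. Consequently $g^*$ carries $S(C)$ into $S(B)$ and $f^*$ carries $S(B)$ into $S(A)$, and as operations on seminorms one has the evident identity $(gf)^* = f^*g^*$.

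With this in hand the argument is immediate. Fix an arbitrary $p\in S(A)$. Since $gf$ has the \SEP[], there is some $r\in S(C)$ with $p = (gf)^*r$. Setting $q = g^*r$, the preservation property above gives $q\in S(B)$, and then
\[ f^*q = f^*(g^*r) = (gf)^*r = p. \]
Since $p\in S(A)$ was arbitrary, this exhibits every continuous submultiplicative \cstar-seminorm on $A$ as the pullback along $f$ of one on $B$; combined with the automatic inclusion $f^*S(B)\subseteq S(A)$ coming from the first step, this yields $S(A) = f^*S(B)$, which is precisely the statement that $f$ has the \SEP[].

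There is essentially no genuine obstacle here. The only point requiring verification is the closure of the class $S(\cdot)$ of continuous submultiplicative \cstar-seminorms under pullback along continuous \starhoms[], which I have isolated as the first step; that check is entirely routine. Everything else is the formal identity $(gf)^* = f^*g^*$ applied to the defining condition of the \SEP[] for $gf$. One should note that this asymmetry is expected: the same computation gives no control over seminorms of $B$ lying outside the reach of $f^*$, so there is no reason for a companion cancellation statement about $g$ to hold, and accordingly the lemma is stated only for the first factor.
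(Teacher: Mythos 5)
Your argument is correct and is essentially identical to the paper's proof: take $p\in S(A)$, use the \SEP[] of $gf$ to find $r\in S(C)$ with $(gf)^*r=p$, and observe that $g^*r\in S(B)$ pulls back to $p$ along $f$. The preliminary check that pullback preserves continuous \cstar-seminorms is left implicit in the paper but is, as you say, routine.
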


\begin{proof}
  Suppose $p \in S(A)$ and $q \in S(C)$ with $(gf)^*q=p$.  Then
  $f^*g^*q=p$ so $g^*q \in S(B)$ is the required extension of $p$ over
  $f$.
\end{proof}

\begin{lemma}
        \label{SEPcobase}
       Let
\[
\xymatrix{B & \ar[l]_{f} A \ar[r]^{g} & C
}
\]
       be a diagram in $\ourcat$ with $f:A\to B$
       having the \SEP[]. Then the canonical map $h:C \to B \ast_A C$
       has the \SEP[].
\end{lemma}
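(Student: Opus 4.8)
The plan is to combine the explicit description of continuous $\cstar$-seminorms on a colimit coming from Lemma~\ref{colimits} with the theory of amalgamated free products of $\cstar$-algebras. Recall from that construction that the pushout $B \ast_A C$ carries exactly the $\cstar$-seminorms pulled back from continuous $\starhoms$ into $\cstar$-algebras whose two legs agree on $A$; concretely, each $s \in S(B \ast_A C)$ has the form $s = \Phi^*\|\cdot\|_E$ for a continuous $\starhom$ $\Phi\colon B \ast_A C \to E$ into a $\cstar$-algebra $E$, equivalently a compatible pair of continuous $\starhoms$ $\beta\colon B \to E$ and $\gamma\colon C \to E$ with $\beta f = \gamma g$, in which case $h^* s = \gamma^*\|\cdot\|_E$. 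Since $h^* S(B \ast_A C) \subseteq S(C)$ is automatic from continuity of $h$, it suffices to show every $r \in S(C)$ arises as $\gamma^*\|\cdot\|_E$ for such a pair whose $C$-leg $\gamma$ is isometric onto its image seminorm; then $S(C) = h^* S(B \ast_A C)$, which is the \SEP[] for $h$.

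First I would pass to the associated $\cstar$-algebras. Given $r \in S(C)$, form the canonical $\starhom$ $\pi_r\colon C \to \overline{C_r}$, so $r = \pi_r^*\|\cdot\|$. Pulling back along $g$ yields $g^* r \in S(A)$, and since $f$ has the \SEP[] there is some $p \in S(B)$ with $f^* p = g^* r$; set $\pi_p\colon B \to \overline{B_p}$. Because $f^* p = g^* r$, the maps $\pi_p f$ and $\pi_r g$ induce the \emph{same} seminorm on $A$, so both factor through one and the same $\cstar$-algebra $\overline{A_{g^*r}}$, which sits as a $\cstar$-subalgebra via isometric embeddings $j_B\colon \overline{A_{g^*r}} \to \overline{B_p}$ (the closure of $\pi_p f(A)$) and $j_C\colon \overline{A_{g^*r}} \to \overline{C_r}$ (the closure of $\pi_r g(A)$).

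The decisive step is to amalgamate $\overline{B_p}$ and $\overline{C_r}$ over their common subalgebra. I would take $E$ to be the full amalgamated free product $\overline{B_p} \ast_{\overline{A_{g^*r}}} \overline{C_r}$ formed from $j_B, j_C$ in the category of $\cstar$-algebras, with canonical maps $\iota_B\colon \overline{B_p} \to E$ and $\iota_C\colon \overline{C_r} \to E$ satisfying $\iota_B j_B = \iota_C j_C$. The key external input — and the main obstacle — is the standard but genuinely nontrivial fact that the structural maps from the factors into a full amalgamated free product of $\cstar$-algebras are isometric embeddings; in particular $\iota_C$ is isometric. (This is where the real content lies; the easy retraction argument fails precisely because there need be no conditional expectation $\overline{B_p} \to \overline{A_{g^*r}}$.) Setting $\beta = \iota_B \pi_p$ and $\gamma = \iota_C \pi_r$, the identification $\iota_B j_B = \iota_C j_C$ forces $\beta f = \gamma g$, so by the universal property of the pushout in $\ourcat$ the pair $(\beta,\gamma)$ yields a continuous $\starhom$ $\Phi\colon B \ast_A C \to E$ with $\Phi h = \gamma$.

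Finally I would conclude by pulling back the norm. Put $s = \Phi^*\|\cdot\|_E \in S(B \ast_A C)$, a continuous sub-multiplicative $\cstar$-seminorm since it factors through a $\cstar$-algebra. Then $h^* s = (\Phi h)^*\|\cdot\|_E = \gamma^*\|\cdot\|_E = \pi_r^*\bigl(\iota_C^*\|\cdot\|_E\bigr) = \pi_r^*\|\cdot\|_{\overline{C_r}} = r$, where the fourth equality uses that $\iota_C$ is isometric. Hence $S(C) \subseteq h^* S(B \ast_A C)$, and together with the automatic reverse inclusion this gives $S(C) = h^* S(B \ast_A C)$, so $h$ has the \SEP[]. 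Every step apart from the isometry of the amalgamated free product legs is a formal manipulation of the universal properties and seminorm descriptions already in hand.
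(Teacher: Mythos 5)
Your proposal is correct and follows essentially the same route as the paper's proof: pass to the $\cstar$-completions $\overline{A_{g^*r}}$, $\overline{B_p}$, $\overline{C_r}$ (using the \SEP[] of $f$ to make both legs isometric), form the amalgamated free product of $\cstar$-algebras, invoke the nontrivial fact that its structural maps are isometric, and pull the resulting norm back along the induced map out of $B \ast_A C$. The only cosmetic difference is that you isolate the isometry of the free-product legs as the explicit key input, whereas the paper phrases the same step as the induced map $C_1 \to P$ being an isometry with a reference to Mallios.
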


\begin{proof}
  Suppose $p \in S(C)$ and let $r=g^*p \in S(A)$ with $q \in S(B)$ an
  extension of $r$ over $f$.  Now let $C_1$ denote the completion of
  $C$ with respect to $p$ (or $\overline{C_p}$ from below Lemma
  \ref{colimits}), $B_1$ the completion of $B$ with respect to
  $q$ and $A_1$ the completion of $A$ with respect to $r$.  This
  implies $A_1,B_1$ and $C_1$ are $\calgs$ and the completed versions,
  $f_1$ and $g_1$ of $f$ and $g$ respectively, have become isometries.
  Now take the pushout in the category of $\calgs$,
\[
\xymatrix{
A_1 \ar[r]^{g_1} \ar[d]_{f_1} & C_1 \ar[d]^{h_1} \\
B_1 \ar[r] & P
}
\]
  so that $P$ represents the appropriate quotient of the amalgamated
  free product of $\calgs$.  Notice we have a commutative diagram of
  continuous \starhoms
\[
\xymatrix{
A \ar[r]^{g} \ar[d]_{f} & C \ar[d]^{h} \ar[r] & C_1 \ar[dd]^{h_1} \\
B \ar[r] \ar[d] & B \ast_A C \\
B_1 \ar[rr] && P
}
\]
  hence an induced continuous $\starhom$ from the universal property of
  $B \ast_A C$ which makes the diagram
\[
\xymatrix{
C \ar[r]^{g_1} \ar[d]_{h} & C_1 \ar[d]^{h_1} \\
B \ast_A C \ar[r] & P
}
\]
  commute.  Since $f_1$ and $g_1$ are isometries, the
  induced $\starhom$ $h_1:C_1 \to P$ is an isometry as well (see
  \cite{Mallios(1986)}).  In particular, the $\cstar$-norm of $P$ extends
  that of
  $C_1$, so commutativity of the last diagram implies pulling back the
  $\cstar$-norm on
  $P$ to a $\cstar$-seminorm on $B \ast_A C$ extends the original
  $\cstar$-seminorm $p \in S(C)$ as required.
\end{proof}

Recall the construction of the coproduct in $\ourcat$ as described in
\ref{univ_examples} (\ref{free_product_in_G_and R}),
written with the usual free product notation.

\begin{cor}
        \label{SEPcoprod}
       Let $I$ be a set, and let
       $\{f_i\}_{i \in I}$ be a
       set of morphisms $f_i: A_i \to B_i$ in $\ourcat$
       which all have the \SEP[].  Then the canonical map
       $\ast_{i\in I} f_i:
       \ast_{i\in I} A_i \to \ast_{i\in I} B_i$
       of coproducts in $\ourcat$ has the \SEP[].
\end{cor}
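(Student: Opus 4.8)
The plan is to exhibit the coproduct map $\ast_{i\in I} f_i$ as a transfinite composition of cobase changes of the individual maps $f_i$, and then to feed this into the two stability results already established: Lemma~\ref{SEPcobase}, which says cobase changes along \SEP[]-maps inherit the \SEP[], and Lemma~\ref{embedcolim}, which says transfinite compositions of \SEP[]-maps have the \SEP[].

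Concretely, I would well-order the index set as $I=\{i_\alpha \mid \alpha<\lambda\}$ and flip the factors from $A_{i_\alpha}$ to $B_{i_\alpha}$ one at a time, setting
\[ C_\beta = \bigl(\ast_{\alpha<\beta} B_{i_\alpha}\bigr) \ast \bigl(\ast_{\alpha\ge\beta} A_{i_\alpha}\bigr), \]
so that $C_0 = \ast_{i\in I} A_i$, the colimit of the sequence is $C_\lambda = \ast_{i\in I} B_i$, and the induced map $C_0 \to C_\lambda$ is $\ast_{i\in I} f_i$ (as one checks factorwise). The decisive feature of this bookkeeping is that the successor map $C_\beta \to C_{\beta+1}$ is exactly the cobase change of $f_{i_\beta}\colon A_{i_\beta}\to B_{i_\beta}$ along the canonical coproduct inclusion $A_{i_\beta}\to C_\beta$: pushing $f_{i_\beta}$ out along a coproduct leg replaces the single factor $A_{i_\beta}$ by $B_{i_\beta}$ and leaves all the others fixed, which is precisely $C_{\beta+1}$. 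Since every $f_i$ has the \SEP[], Lemma~\ref{SEPcobase} then shows each successor map $C_\beta \to C_{\beta+1}$ has the \SEP[]. With the whole $\lambda$-sequence running over \starhoms\ having the \SEP[], Lemma~\ref{embedcolim} yields that the inclusion $C_0 \to C_\lambda$ of the initial term into the colimit has the \SEP[]; as this map is $\ast_{i\in I} f_i$, that is the claim.

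The step I expect to be the main obstacle is a completeness mismatch between the two lemmas. Lemma~\ref{embedcolim} is phrased for colimits in $\LMCCSTAR$, whereas coproducts in $\ourcat$ are obtained by applying the $\nu$-completion to the corresponding coproducts in $\LMCCSTAR$ (cf.\ the proof of Lemma~\ref{colimits}); moreover, over \SEP[]-maps the $\LMCCSTAR$-colimit is merely the honest union of the $C_\beta$, which need not be $\nu$-complete. I would reconcile this by running the sequence in $\LMCCSTAR$ --- Lemma~\ref{SEPcobase} holds there verbatim, since its proof passes to $\cstar$-completions with respect to single seminorms and never invokes $\nu$-completeness --- to conclude that the $\LMCCSTAR$-coproduct map $\ast^{\LMCCSTAR}_{i} f_i$ has the \SEP[], and then observing that the \SEP[] survives $\nu$-completion. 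The latter holds because a continuous sub-multiplicative \cstar-seminorm on a dense subalgebra extends uniquely to such a seminorm on its $\nu$-completion and restricts back, so that restriction induces a bijection $S(\complete{C}) \cong S(C)$ compatible with pullback along maps; hence the $\ourcat$-coproduct map, being the $\nu$-completion of $\ast^{\LMCCSTAR}_{i} f_i$, again has the \SEP[].
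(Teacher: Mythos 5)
Your proof is correct and follows essentially the same route as the paper's: the paper likewise observes that replacing one factor at a time is a cobase change (via $B \ast C \approx B \ast_A (A \ast C)$, handled by Lemma~\ref{SEPcobase}) and then treats the general index set as a transfinite composition whose limit stages are covered by Lemma~\ref{embedcolim}. Your closing paragraph on reconciling the $\LMCCSTAR$-colimit with the $\nu$-completed coproduct in $\ourcat$ is a point the paper's proof leaves implicit, and your resolution (seminorms on a dense subalgebra extend to and restrict from the $\nu$-completion, so the \SEP[] passes to completions) is sound and consistent with how the paper handles the analogous issue in Lemma~\ref{allsmall_in_ourcat}.
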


\begin{proof}
  Suppose $f:A \to B$ and $g:C \to D$ are morphisms in $\ourcat$
  which have the \SEP[].  Notice $B \ast C \approx B \ast_A (A \ast
  C)$, so the diagram
\[
\xymatrix{
B & \ar[l]_{f} A \ar[r] & A \ast C
}
\]
  and Lemma \ref{SEPcobase} imply $A \ast C \to B \ast C$ has the
  \SEP[].  Since the symmetric argument implies $B \ast C \to B \ast
  D$ also has the \SEP, and composition clearly preserves the \SEP[],
  we conclude that $A \ast C \to B \ast D$ has the \SEP[]. 

  The previous paragraph is the successor ordinal case and suggests
  the general case may be considered as a transfinite composition
  after any choice of ordering for $I$.
  Hence, the limit ordinal case is handled by Lemma \ref{nicecolim}
  and the claim follows by transfinite induction.
\end{proof}

The key to applying the results on the \SEP to the expected model
structures is the next proposition.
We'll take zero as the basepoint of $I$ in general.

\begin{defn}
Let $A$ be an algebra in $\proa$.
We say $A$ has the \CSEP if the canonical
morphism $i_1: A\to A \ptatimes I$ 
has the \SEP[]. If $i_1: A \ptatimes X \to (A \ptatimes X) \ptatimes I$ has
the \SEP for any $X \in \ptcpt$ we will say that $A$ has the \sCSEP[].
\end{defn}

\begin{remark}
  The proof of the following proposition relies upon a result in
  $KK$-theory.  Namely, for any separable $A \in \calg$, any Hilbert space
  ${\mathcal H}$ and any $X \in  \ptcpt$ one has $KK(A,\ptcof{X,\bofh})=0$.
  This follows from an Eilenberg-swindle type of argument.
\end{remark}

For the next statement recall from Definition \ref{def_of_qA} that $qA$ for an $A\in\ourcat$
denotes the kernel of the fold map $A\ast A \to A$.
\begin{prop}
  \label{qgood}
  If $A$ is a separable \calgs[], then $qA \atensorK \compacts$ has the \sCSEP[].
\end{prop}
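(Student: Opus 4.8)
The plan is to unwind the definition and reduce the statement to a single null-homotopy assertion, which is where the $KK$-theoretic input enters. Fix $X \in \ptcpt$ and abbreviate $C = (qA \atensorK \compacts) \ptatimes X$; by definition of the \sCSEP[] we must show that the canonical map $i_1 \colon C \to C \ptatimes I$ has the \SEP[], i.e.\ that every continuous \cstar-seminorm $p \in S(C)$ is of the form $i_1^* q$ for some $q \in S(C \ptatimes I)$.

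First I would reformulate the \SEP[] in terms of representing \calgs. A seminorm $p \in S(C)$ is encoded by a continuous \starhom\ $\pi \colon C \to D$ with dense range into the \calgs[] $D = \overline{C_p}$, with $p(c) = \|\pi(c)\|_D$. I claim it suffices to produce an isometric inclusion $\iota \colon D \hookrightarrow D'$ of \calgs\ together with a \starhom\ $\Psi \colon C \ptatimes I \to D'$ satisfying $\Psi \circ i_1 = \iota \circ \pi$; then $q(c) = \|\Psi(c)\|_{D'}$ defines the required seminorm, since $i_1^* q = (\Psi \circ i_1)^* \|\cdot\|_{D'} = (\iota \circ \pi)^* \|\cdot\|_{D'} = \pi^* \|\cdot\|_D = p$ because $\iota$ is isometric. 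Using the adjunction of Proposition \ref{almostadjoint}, $\Hom(C \ptatimes I, D') \cong \Hom(C, \ptcof{I,D'})$, and observing that under this bijection precomposition with $i_1$ corresponds to postcomposition with the evaluation $\mathrm{ev}_1 \colon \ptcof{I,D'} \to D'$, the existence of $\Psi$ becomes the existence of a lift of $\iota\pi$ through $\mathrm{ev}_1$. Since $\ptcof{I,D'}$ is the algebra of paths in $D'$ issuing from the basepoint $0$, such a lift is exactly a homotopy of \starhoms\ from $0$ to $\iota\pi$; in other words, the problem becomes showing that $\iota\pi \colon C \to D'$ is null-homotopic for a suitable isometric enlargement $D \hookrightarrow D'$.

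Next I would transport this null-homotopy question across the chain of partial adjunctions onto $qA$. Combining Proposition \ref{another_partial}, the homotopy isomorphism \eqref{bijection_for_adjoint_of_K(homotopyversion)} and Proposition \ref{cadjoint} (the latter applicable because $qA \in \calg$ is compactly generated), one gets a natural bijection on homotopy classes
\[ [C, D'] = [(qA \atensorK \compacts) \ptatimes X, D'] \cong [qA, \ptcof{X, D' \tensor \compacts}], \]
under which $[\iota\pi]$ is carried to the class of the composite $\beta' \colon qA \to \ptcof{X, D' \tensor \compacts}$ obtained from the transform $\beta$ of $\pi$ by postcomposition with $\ptcof{X, \iota \tensor \mathrm{id}_\compacts}$. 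Thus $\iota\pi$ is null-homotopic as soon as $\beta'$ is, and the whole statement reduces to choosing $\iota$ so that $\beta'$ becomes null-homotopic.

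Finally, the enlargement is chosen precisely to access the vanishing. Represent $D$ faithfully on a Hilbert space $\mathcal H$, giving an isometric inclusion $\iota \colon D \hookrightarrow \bofh$; taking $D' = \bofh$, the target above becomes $\ptcof{X, \bofh \tensor \compacts}$, a stable algebra containing $\bofh$ as a corner. The Eilenberg-swindle argument underlying the vanishing $KK(qA, \ptcof{X,\bofh}) = 0$ recorded in the Remark preceding the statement (valid since $qA$ is separable, together with Cuntz's Theorem \ref{htpyKK}) applies equally to this stable algebra and shows that every \starhom\ $qA \to \ptcof{X, \bofh \tensor \compacts}$ is null-homotopic; hence $\beta'$, and therefore $\iota\pi$, is null-homotopic, producing the desired $q$. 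I expect this last step to be the main obstacle, since it is the only place where genuine $KK$-theoretic content enters; the remaining difficulty is purely bookkeeping, namely matching $i_1$ with evaluation at $1$ and keeping careful track of the auxiliary copy of $\compacts$ forced onto the target by the adjoint of $\atensorK \compacts$ when relating $\ptcof{X, \bofh \tensor \compacts}$ to the form in which the Remark is stated.
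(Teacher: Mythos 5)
Your proposal is correct and follows essentially the same route as the paper's proof: represent $\overline{C_p}$ isometrically in $\bofh$, use Propositions \ref{almostadjoint}, \ref{prop_bijection_for_adjoint_of_K} and \ref{cadjoint} to transport the problem to a map out of $qA$ into a stable algebra of $\bofh$-valued functions, invoke the vanishing $KK(A,\ptcof{X,\bofh})=0$ together with Cuntz's Theorem \ref{htpyKK} to obtain a null homotopy, and transport it back to a factorization of $\psi$ through $i_1$ along which the operator norm pulls back to the required seminorm. The only difference is organizational — you isolate the null-homotopy criterion abstractly before transporting, whereas the paper transports the map first — and the bookkeeping you defer (exchanging $\ptcof{I,-}$, $\ptcof{X,-}$ and $\tensor\compacts$) is exactly what the paper handles with its isomorphism $\ptcof{I,\ptcof{X,\bofh}\tensor\compacts}\approx\ptcof{X,\ptcof{I,\bofh\tensor\compacts}}$.
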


\begin{proof}
  For ease of reference, let $B=(qA \atensorK \compacts) \ptatimes X$
  and suppose 
  $p \in S(B)$.  We must show
  there is an extension to $q \in S(B \ptatimes I)$ over the map
  $i_1:B \to B \ptatimes I$.
  Clearly, there
  is a composite morphism in $\ourcat$
\[\psi:B \to \overline{B_p} \to \bofh
\]
  with $p$ the pullback under $\psi$ of the norm on $\bofh$ (since
  $\overline{B_p}$ is a $\calg$). 
  Using Proposition \ref{almostadjoint}  and
  Proposition \ref{prop_bijection_for_adjoint_of_K}, this corresponds to
  a unique $\starhom$ between $\calgs$
\[\psi':qA \to \ptcof{X,\bofh} \tensor \compacts.
\]
  Since $KK(A,\ptcof{X,\bofh})$ is trivial,
  Cuntz's Theorem \ref{htpyKK}
  implies that $\psi'$ represents the zero element
  in the group.  Hence, there is a null homotopy
\[H:qA \to \ptcof{I, \ptcof{X,\bofh} \tensor \compacts}
\]
  of \starhoms[{}].
  Recall that there is an isomorphism of $\calgs$
\[\ptcof{I, \ptcof{X,\bofh} \tensor \compacts}
    \approx \ptcof{X,\ptcof{I,\bofh \tensor \compacts}}
\]
  since both $X$,$I \in \ptcpt$.  Using Proposition \ref{almostadjoint}  and
  Proposition \ref{prop_bijection_for_adjoint_of_K} again, $H$ yields a corresponding \starhom\
\[H': B \ptatimes I \to \bofh
\]
  which factors $\psi$ as follows
\[
\psi: B \stackrel{i_1}{\to} B \ptatimes I  \stackrel{H'}{\to}
 \bofh.
\]
  Hence, the pullback of the norm on $\bofh$ along $H'$ yields
  the desired $q \in S(B \ptatimes I)$ extending $p$ over $i_1$.
\end{proof}

\section{Building Model Structures on $\ourcat$}
\label{lifting}

A standard technique in model categories involves building a new model
structure from a known model structure together with an adjoint pair.
This technique is referred to as lifting, and results saying when such
operations are successful are generally referred to as lifting lemmas.

For the basic object(s) $A \in \ourcat$, we will make the following
two assumptions throughout this section.
\begin{enumerate}
\item   $\righta=\Hom(A,?)$ has $\lefta=A \ptatimes ?$ as a partial left adjoint
\item   there is a natural homeomorphism
  \[\righta(\ptcof{X,B}) \to \ptcof{X,\righta(B)}
  \]
\end{enumerate}
 
We would like to give a lifting lemma under these assumptions, so we
should begin by pointing out that we have a variety of objects which
satisfy both assumptions. 
We know that for any separable $A' \in \calg$, the object
$A=qA' \atensorK \compacts$ satisfies assumption (1) by
Proposition \ref{another_partial} and
assumption (2) by Proposition
\ref{cont_aspects} equation \eqref{pta_natural_homeo}.
Near the end of this section, we will also need to assume $A$ has the
\sCSEP[] in order to
produce our model structures, which holds for
$A=qA' \atensorK \compacts$ by Proposition \ref{qgood}.

Although this lifting technique is standard, the proofs
are provided in detail because we do not quite consider
an adjoint pair.  (Compare with \cite[Section 8]{Michele}.)
The following is not particularly surprising but is a technical
necessity for the argument that follows.

\begin{lemma}\ 
\label{htpy}\label{nullhtpy}
\begin{enumerate}
\item   Suppose $A$ satisfies our second assumption.  Then
   for any $B \in \ourcat$ and $X \in \ptcpt$, there is a natural map
\[\righta (B) \Smash X \to \righta(B \ptatimes X).
\] 
        In fact, this
        is given by $f \Smash x \longmapsto i_x \circ f$.

\item   For any $B \in \ourcat$ and $X$,$Y \in \ptcpt$,
        there is a natural \starhom
 \[ (B \ptatimes X) \ptatimes Y \to  B \ptatimes (X \Smash Y).
\]
       
\end{enumerate}
\end{lemma}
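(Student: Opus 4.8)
The plan is to obtain both maps formally from the adjunction-type statements already available — the natural continuous bijection of Proposition \ref{almostadjoint} and, for part (1), our standing second assumption — so that the set-theoretic definitions of the maps are the evident ones while all continuity is imported from those results together with the pointed exponential law for the compact spaces $X$ and $Y$. The reason this indirection is needed is that $A$ is in general not compact, so one cannot establish continuity of the assignments by a naive application of the exponential law to $\Hom(A,?)$.

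For part (1), I would first produce the natural \starhom\
\[\beta\colon B \to \ptcof{X, B \ptatimes X}\]
as the image of $\mathrm{id}_{B \ptatimes X}$ under the natural bijection $\Hom(B \ptatimes X, B \ptatimes X) \cong \Hom(B, \ptcof{X, B \ptatimes X})$ of Proposition \ref{almostadjoint} (taken with $A = B$, which lies in $\ourcat$); unwinding the bijection shows $\beta(b)(x) = i_x(b)$, where $i_x\colon B \to B \ptatimes X$ is the canonical \starhom, and $\beta$ is a continuous \starhom\ by construction. Applying the functor $\Hom(A,?)$ gives a continuous map $\Hom(A,B) \to \Hom(A, \ptcof{X, B \ptatimes X})$ (with $\ptcof{X, B \ptatimes X} \in \ourcat$ by Lemma \ref{cin}), and composing with the homeomorphism of our second assumption yields a continuous map
\[\Hom(A,B) \to \ptcof{X, \Hom(A, B \ptatimes X)}.\]
Its adjoint under the pointed mapping-space adjunction — valid because $X$ is compact Hausdorff, so evaluation is continuous — is the desired map $\Hom(A,B) \Smash X \to \Hom(A, B \ptatimes X)$. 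Finally I would trace an element $f \Smash x$ through each step to confirm the asserted formula $f \Smash x \mapsto i_x \circ f$, and note that naturality in $B$ and $X$ is inherited from the naturality of $\beta$, of Proposition \ref{almostadjoint}, and of our second assumption.

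For part (2), I would argue via the Yoneda lemma. Using Proposition \ref{almostadjoint} twice (and Lemma \ref{cin}, which guarantees $\ptcof{Y,C} \in \ourcat$) gives, naturally in $C \in \ourcat$,
\[\Hom((B \ptatimes X) \ptatimes Y, C) \cong \Hom(B \ptatimes X, \ptcof{Y,C}) \cong \Hom(B, \ptcof{X, \ptcof{Y, C}}),\]
while a single application gives $\Hom(B \ptatimes (X \Smash Y), C) \cong \Hom(B, \ptcof{X \Smash Y, C})$. The currying map $\kappa_C\colon \ptcof{X \Smash Y, C} \to \ptcof{X, \ptcof{Y, C}}$ sending $g$ to $(x \mapsto (y \mapsto g(x \Smash y)))$ is a continuous \starhom\ natural in $C$, its continuity being the pointed exponential law for the compact spaces $X$ and $Y$ (cf.~Lemma \ref{steenrods}). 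Post-composing with $\Hom(B, \kappa_C)$ and threading through the displayed bijections (regarded as bijections of underlying sets, so that only naturality in $C$ is required) produces, naturally in $C$, a map
\[\Hom(B \ptatimes (X \Smash Y), C) \to \Hom((B \ptatimes X) \ptatimes Y, C).\]
By the Yoneda lemma this natural transformation is represented by a unique \starhom\ $(B \ptatimes X) \ptatimes Y \to B \ptatimes (X \Smash Y)$, and naturality in $B$, $X$ and $Y$ follows from that of its constituents.

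The main obstacle in both parts is continuity rather than the set-level construction: the underlying maps of sets are the obvious ones, but because $A$ need not be compact their continuity cannot be checked directly. In part (1) this is exactly the role of our second assumption, while in part (2) the delicate point is verifying that the currying map $\kappa_C$ is a continuous homomorphism natural in $C$, which is where the compactness of $X$ and $Y$ (through Lemma \ref{steenrods}) does the essential work.
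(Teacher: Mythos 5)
Your proposal is correct and follows essentially the same route as the paper: in part (1) your $\beta$ is exactly the adjoint $\iota_B(j)$ of the canonical map $j\colon B\Smash X\to B\ptatimes X$ that the paper constructs (you obtain it as the image of the identity under Proposition \ref{almostadjoint}, which unwinds to the same map), followed by the same application of $\Hom(A,?)$, the second assumption, and un-currying over the compact space $X$. In part (2) the paper's ``take the unique \starhom\ corresponding to the identity of $D=B\ptatimes(X\Smash Y)$'' in its string of natural bijections is precisely your Yoneda argument with the currying map $\kappa_C$.
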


\begin{proof}
        Recall from the construction of $\ptatimes$ that there is a
        natural pointed continuous map $j: B \Smash X \to B \ptatimes X$. 
        Since $X$ is compact, it follows that
\[\iota_B:\ptcof{B \Smash X,B \ptatimes X} \to
           \ptcof{B,\ptcof{X,B \ptatimes X}}
\]
        is a homeomorphism, as in the proof of Lemma \ref{steenrods}.
        Thus, the adjoint of $j$,
\[\iota_B(j): B \to \ptcof{X,B \atimes X}
\]
        is
        pointed, continuous and compatible with the algebraic structure,
        i.e.~it is an element in $\Hom(B,\ptcof{X, B \atimes X})$.
        This induces a pointed continuous map (by postcomposition)
\[\righta(B) \to \righta(\ptcof{X,B \atimes X}).
\]
        By 
        our second assumption on $A$, we have a natural homeomorphism
\[\righta(\ptcof{X, B \ptatimes X}) \to
        \ptcof{X,\righta(B \ptatimes X)}
\]
        hence, an induced pointed continuous map       
\[\righta(B) \to \ptcof{X,\righta(B \ptatimes X)}.
\]
        Finally,
\[\iota_{\righta(B)}:\ptcof{\righta(B) \Smash X,\righta(B \ptatimes X)} \to
           \ptcof{\righta(B),\ptcof{X,\righta(B \ptatimes X)}}
\]
        is once again a homeomorphism, since $X$ is compact.
        Taking $(\iota_{\righta(B)})^{-1}$ of the composition
        described above yields the expected pointed map
\[\righta(B) \Smash X \to \righta(B \ptatimes X)
\]
        and the formula given in the statement may be traced from the
        construction.

        For the second claim, the natural isomorphism
\[\ptcof{X \Smash Y,D} \ouriso \ptcof{X,\ptcof{Y,D}}
\]
        together with several variations of
        the natural bijection of Proposition
        \ref{almostadjoint} yields a string of natural bijections 
\[
\xymatrix{
{\Hom(B \ptatimes (X \Smash Y),D)} \ar[r] & 
    {\Hom(B,\ptcof{X \Smash Y,D})} \ar[d] \\
{\Hom(B \ptatimes X, \ptcof{Y,D})} \ar[r] &
{\Hom(B,\ptcof{X,\ptcof{Y,D}})} \\
{\Hom((B \ptatimes X) \ptatimes Y,D)} \ar[u].
}
\]
        Choosing $D=B \ptatimes (X \Smash Y)$ and taking the unique
        \starhom\ corresponding to the identity map of $D$
        yields the expected natural \starhom.
\end{proof}

\begin{lemma}
        \label{nullcyl}
        Suppose $A,B \in \ourcat$ with $A$ satisfying our second
        assumption. 
        Then the identity
        $\righta(B \ptatimes I) \to \righta(B \ptatimes I)$
        is null homotopic in $\pttop$.
\end{lemma}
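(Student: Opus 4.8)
The plan is to exhibit an explicit contracting homotopy, realizing the intuition that $B \ptatimes I$ is a cone on $B$ and hence that $\righta(B \ptatimes I)$ is contractible. The central ingredient is the ``multiplication'' $\mu : I \Smash I \to I$, $(s,t) \mapsto st$; since $st = 0$ whenever $s$ or $t$ equals the basepoint $0$, this descends to a well-defined pointed map on the smash product. Applying the functor $B \ptatimes ?$ to $\mu$ produces a \starhom\ $B \ptatimes \mu : B \ptatimes (I \Smash I) \to B \ptatimes I$, which I would combine with the two structural maps of Lemma \ref{htpy}.

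Concretely, writing $\sigma$ for the natural map $\righta(B \ptatimes I) \Smash I \to \righta((B \ptatimes I) \ptatimes I)$ of Lemma \ref{htpy}(1) and $\tau$ for the natural \starhom\ $(B \ptatimes I) \ptatimes I \to B \ptatimes (I \Smash I)$ of Lemma \ref{htpy}(2), I would define
\[
H \colon \righta(B \ptatimes I) \Smash I \xrightarrow{\ \sigma\ } \righta\big((B \ptatimes I) \ptatimes I\big) \xrightarrow{\ \righta(\tau)\ } \righta\big(B \ptatimes (I \Smash I)\big) \xrightarrow{\ \righta(B \ptatimes \mu)\ } \righta(B \ptatimes I).
\]
Being a composite of continuous pointed maps, $H$ is a pointed map out of $\righta(B \ptatimes I) \Smash I$, with the second factor $I$ (based at $0$) serving as the homotopy parameter. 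Such a map is precisely a based homotopy from the constant map at the basepoint (the end $t=0$, which is collapsed in the smash) to the map $H|_{t=1}$. Thus it remains only to identify the two ends.

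The $t=0$ end is constant by construction, since $\sigma(g \Smash 0) = i_0 \circ g = 0$ because $i_0$ is the zero map (and this is forced anyway by the collapse in $\Smash I$). For the $t=1$ end I would show that the endomorphism $(B \ptatimes \mu) \circ \tau \circ i_1$ of $B \ptatimes I$ is the identity, so that $H|_{t=1}(g) = \righta(\mathrm{id})(g) = g$ and hence $H|_{t=1} = \mathrm{id}$. Using the explicit formula $\sigma(g \Smash 1) = i_1 \circ g$ from Lemma \ref{htpy}(1) together with the description of $\tau$ coming out of the adjunction chain in the proof of Lemma \ref{htpy}(2), this reduces to a computation on the topological generators: writing $[b,y]$ for the image of $(b,y) \in B \times I$ in $B \ptatimes I$, one finds $i_1([b,y]) = [[b,y],1]$, then $\tau([[b,y],1]) = [b, y \Smash 1]$, and finally $(B \ptatimes \mu)([b, y \Smash 1]) = [b, \mu(y \Smash 1)] = [b, y\cdot 1] = [b,y]$. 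Since these generators topologically generate $B \ptatimes I$ and all maps involved are continuous \starhoms, the composite equals $\mathrm{id}_{B \ptatimes I}$.

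The main obstacle, and the only place demanding real care, is precisely this last identification of $\tau$ on generators: Lemma \ref{htpy}(2) defines $\tau$ only implicitly, as the \starhom\ corresponding to the identity under a string of adjunction isomorphisms, so one must unwind that chain (via Proposition \ref{almostadjoint} and the natural homeomorphism $\ptcof{I \Smash I, D} \cong \ptcof{I, \ptcof{I,D}}$) to confirm $\tau([[b,y],t]) = [b, y \Smash t]$. Once this compatibility and the basepoint bookkeeping are in place, the null homotopy follows immediately.
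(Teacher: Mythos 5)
Your proof is correct and follows essentially the same route as the paper's: the paper also transports a null homotopy of $\mathrm{id}_I$ (there written as a pointed map $I \Smash I_+ \to I$, which is just your $\mu$ with an unreduced parameter) through $B \ptatimes {?}$ and the two natural maps of Lemma \ref{htpy}. The only difference is that you carry out explicitly the endpoint identification on generators that the paper leaves implicit, and that verification (including the formula for $\tau$ obtained by unwinding the adjunction chain) is accurate.
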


\begin{proof}
        A pointed null homotopy for $I$,
\[H:I \Smash I_+ \to I
\]
        induces a morphism
\[B \ptatimes H: B \ptatimes (I \Smash I_+) \to B \ptatimes I
\]
        in $\ourcat$.  Now applying the functor $\righta$ and
        working with the two natural maps of Lemma \ref{htpy} yields
\[
\xymatrix{
\righta(B \ptatimes I) \Smash I_+ \ar[r] &
     \righta((B \ptatimes I)\ptatimes I_+) \ar[r] &
     \righta(B \ptatimes (I \Smash I_+))
     \ar[d]^{\righta(B \ptatimes H)} \\
&&     \righta(B \ptatimes I).
}
\]
        This composite is a pointed null
        homotopy of the identity on $\righta(B \ptatimes I)$.
\end{proof}

\begin{defn}\
        \label{retracts}
\begin{enumerate}

\item   A \starhom\  $i:B \to D$ is the inclusion of a retract if there
        exists a $\starhom$ $r: D \to B$ such that $r\circ i= id_B$.
        In the language of lifting diagrams, this is equivalent
        to the existence of a lift (the dotted arrow) in the following
        commutative square.
\[
\xymatrix{
{B} \ar[r]^{=} \ar[d]_{i} & {B} \ar[d] \\
{D} \ar[r] \ar@{.>}[ur] & {0}
}
\]

\item   An inclusion of a retract is the inclusion of a deformation
        retract if there exists a lift $h$ in the following diagram:
\[
\xymatrix{
{B} \ar[r]^{i} \ar[d]^{i} & {D}  \ar[r]^{j} & \ptcof{I_+,D} \ar[d]  \\
{D} \ar@{.>}[urr]^{h} \ar[r]_{(id,i\circ r)} & D \times D \ar[r]
  & \ptcof{S^0_+, D}
}
\]
        where precomposition by the map $I_+ \to S^0$
        which collapses the
        interval to the non-basepoint gives the \starhom\
\[j:D \approx \ptcof{S^0,D} \to \ptcof{I_+,D}.
\]

\end{enumerate}
\end{defn}

\begin{remark}
        This definition in terms of lifting diagrams, convenient
        for our application below,
        does agree with the usual notion of
        deformation retract for the category $\pttop$.
\end{remark}

\newcommand{\LLP}{left lifting property}

A morphism $f:A \to B$ will be said to have the \LLP\ with respect to a class of
morphisms if in each commutative square
\[
\xymatrix{
{A} \ar[r] \ar[d]_{f} & {D} \ar[d]^{p} \\
{B} \ar[r] \ar@{.>}[ur] & {E}
}
\]
with $p$ in the specified class of morphisms
there exists a dotted arrow (or lift) which makes both triangles
commute.  For example, for any topological space the map
\[X \to X \times I
\]
has the \LLP\ with respect to the class of Hurewicz fibrations.
Recall that the Serre cofibrations are defined as the maps in $\pttop$
with the \LLP\ with respect to the class of Serre fibrations that are also weak
homotopy equivalences. 
We will mean Serre fibration by the word fibration in what follows and
acyclic fibrations will refer to Serre fibrations which are also weak
homotopy equivalences.

\begin{lemma}
  \label{almostsm7}
  If $j:X \to Y$ is a Serre cofibration between compact spaces in
  $\pttop$ and $Z \in \pttop$, then 
\[j^*:\ptcof{Y,Z} \to \ptcof{X,Z}
\]
  is a Serre fibration in $\pttop$.
\end{lemma}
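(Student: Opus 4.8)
The plan is to verify directly that $j^*$ satisfies the defining right lifting property of a Serre fibration, namely against the generating acyclic cofibrations $k\colon (D^n)_+ \to (D^n \times I)_+$ for $n \ge 0$ (equivalently, against the unpointed inclusions $D^n \times \{0\} \hookrightarrow D^n \times I$), and to convert each such lifting problem, via the exponential law, into an extension problem against a pushout-product to which the standard pushout-product axiom SM7 for $\pttop$ applies. So I would begin with an arbitrary commutative square whose top map is $g\colon (D^n)_+ \to \ptcof{Y,Z}$, whose bottom map is $G\colon (D^n \times I)_+ \to \ptcof{X,Z}$, and which commutes in the sense $j^* g = G k$.

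Since $D^n$, $D^n \times I$, $X$ and $Y$ are all compact, the exponential law is available in the strong form of Lemma \ref{steenrods} and the identity $\cof[W,Z] \cong \ptcof{W_+,Z}$ recalled in Section \ref{mapping}, giving genuine \emph{homeomorphisms} (compact Hausdorff spaces being compactly generated) rather than mere continuous bijections. Concretely, the adjunction $\cof[K, \ptcof{W,Z}] \cong \ptcof{K_+ \Smash W, Z}$ for compact $K$ turns $g$ into a \starhom\ $\hat g\colon (D^n)_+ \Smash Y \to Z$ and $G$ into $\hat G\colon (D^n \times I)_+ \Smash X \to Z$. Commutativity of the square says precisely that $\hat g$ and $\hat G$ agree on $(D^n)_+ \Smash X$, so they assemble into a single map out of the pushout
\[
P = \bigl((D^n)_+ \Smash Y\bigr) \cup_{(D^n)_+ \Smash X} \bigl((D^n \times I)_+ \Smash X\bigr) \longrightarrow Z.
\]
Under the same adjunction, a lift $\tilde G\colon (D^n \times I)_+ \to \ptcof{Y,Z}$ in the original square corresponds exactly to an extension of $P \to Z$ along the canonical map $\Phi\colon P \to (D^n \times I)_+ \Smash Y$.

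Next I would identify $\Phi$ as the pushout-product $k\, \square\, j$ of $k$ with $j$. Because $k$ is an acyclic Serre cofibration (the inclusion $D^n \times \{0\} \hookrightarrow D^n \times I$ is a deformation-retract inclusion, and remains an acyclic cofibration after adding a disjoint basepoint) and $j$ is a Serre cofibration, the classical axiom SM7 for the Serre structure on $\pttop$ shows $\Phi$ is again an acyclic Serre cofibration; all spaces entering these smash products and the pushout are compactly generated, so SM7 applies in its usual form with no modification. Since $Z \to \ast$ is a Serre fibration and an acyclic cofibration has the left lifting property against every fibration, the required extension of $P \to Z$ along $\Phi$ exists. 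Adjointing it back yields the lift $\tilde G$, which establishes the right lifting property and hence that $j^*$ is a Serre fibration. The only genuinely delicate point is the very first reduction, where the exponential law is invoked outside the category of compactly generated spaces — exactly the difficulty flagged in the introduction; but here $j$ is a map of \emph{compact} spaces and the test objects $D^n$, $I$ are compact as well, so Lemma \ref{steenrods} supplies honest homeomorphisms and the remainder is the routine adjoint translation of SM7.
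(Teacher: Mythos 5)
Your proof is correct, but it takes a genuinely different route from the one in the paper. The paper simply quotes the corresponding SM7--type statement for Steenrod's category of compactly generated spaces, where $k\ptcof{Y,k(Z)} \to k\ptcof{X,k(Z)}$ is known to be a fibration (as in \cite{Hovey}), and then strips away the Kelleyfication functors: the inner one disappears by \cite[Lemma 5.3]{Steenrod(1967)} because $X$ and $Y$ are compact, and the outer one is irrelevant because the Serre fibration condition is tested by mapping out of compact spaces. You instead reprove the statement from scratch: you adjoint the lifting problem against $(D^n)_+ \to (D^n\times I)_+$ over to an extension problem along the pushout--product $k\,\square\,j$ and invoke the pushout--product axiom for the Serre structure. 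This is a valid argument, and it has the virtue of making completely explicit where compactness enters --- namely in Lemma \ref{steenrods}, to get honest homeomorphisms (or at least continuity of both adjoint transpositions) in the exponential law, and to guarantee that the smash products and the pushout $P$ are compact Hausdorff, hence computed the same way in $\pttop$ as in the compactly generated category --- and it avoids any mention of the comparison functor $k$. The price is that you must still import the pushout--product axiom, and you need the small additional observation, which you do make, that $\Phi = k\,\square\,j$, being an acyclic Serre cofibration between compact Hausdorff spaces, retains the left lifting property against $Z \to \ast$ even when $Z$ itself is not compactly generated. Two cosmetic points: the adjoints $\hat g$ and $\hat G$ are continuous pointed maps, not \starhoms (their sources are spaces, not algebras); and it is worth recording that a Serre cofibration between compact Hausdorff spaces is a closed inclusion, so that the pushout $P$ is indeed again compact Hausdorff.
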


\begin{proof}
  For a convenient category, like Steenrod's category of compactly
  generated spaces, this would be the topological analog of Quillen's
  simplicial model category axiom SM7.  In that case, the result holds
  as in \cite{Hovey}.

  For Steenrod's category, we would instead have
\[k\ptcof{Y,k(Z)} \to k\ptcof{X,k(Z)}
\]
  is a fibration.  However, it follows from
  \cite[Lemma 5.3]{Steenrod(1967)} that $k(Z)$ may be replaced by $Z$ up to
  homeomorphism, since $Y$ and $X$ are compact.  Now notice the
  condition of being a fibration is checked by mapping out of
  compact spaces, so the $k$ on the outside is also
  not necessary. 
\end{proof}

The following two lemmas are the keys to the existence of the model
category structure.  The first is essentially due to Quillen.

\begin{lemma}
        \label{retractarg}
        Suppose $j:B \to D$ is a morphism in $\ourcat$ which has the \LLP\
        with respect to each morphism $p$ with $\righta(p)$
        a fibration in $\pttop$.  If $A$ satisfies our second assumption, 
        then $\righta(j)$ is a weak homotopy equivalence.
\end{lemma}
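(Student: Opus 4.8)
The plan is to run Quillen's retract argument, building a mapping path factorization of $j$ \emph{inside} $\ourcat$ out of the path object $\ptcof{I_+,D}$, and then transporting it to $\pttop$ via $\righta=\Hom(A,?)$. The point is that $\righta$ of this factorization will be the ordinary topological mapping path factorization of $\righta(j)$, whose fibration part lets us use the lifting hypothesis on $j$.

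First I would construct the algebraic mapping path object. Since $I_+\in\ptcpt$ and $D\in\ourcat$, Lemma \ref{cin} gives $\ptcof{I_+,D}\in\ourcat$, and evaluation at the two endpoints of $I$ produces maps $ev_0,ev_1:\ptcof{I_+,D}\to D$. As $\ourcat$ has all limits (Lemma \ref{limits}), I form the pullback
\[
N_j = B \times_D \ptcof{I_+,D}
\]
of $j:B\to D$ along $ev_0$. The constant-path section $\sigma:B\to N_j$ (sending $b$ to the pair consisting of $b$ and the constant path at $j(b)$) together with $\rho = ev_1\circ pr_2 : N_j\to D$ yields a factorization $j=\rho\circ\sigma$ in $\ourcat$.

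Next I would identify the image of this factorization under $\righta$ with the topological mapping path factorization of $\righta(j)$. The functor $\righta$ preserves limits (it is a $\Hom$-functor, and the compact-open topology is compatible with products and with passage to subspaces), so $\righta(N_j)\cong \righta(B)\times_{\righta(D)}\righta(\ptcof{I_+,D})$; by our second assumption $\righta(\ptcof{I_+,D})\cong\ptcof{I_+,\righta(D)}$ is the free path space of $\righta(D)$, and the two evaluations go over to the topological endpoint evaluations. Hence $\righta(N_j)$ is the mapping path space of the pointed map $\righta(j)$, with $\righta(\rho)$ the associated endpoint fibration (a Hurewicz, hence Serre, fibration in $\pttop$) and $\righta(\sigma)$ the constant-path inclusion, which is a pointed homotopy equivalence with homotopy inverse the projection. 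In particular $\righta(\sigma)$ is a weak homotopy equivalence.

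Finally, since $\righta(\rho)$ is a fibration in $\pttop$, the hypothesis on $j$ provides a lift $s:D\to N_j$ in the commutative square with top edge $\sigma$, left edge $j$, right edge $\rho$ and bottom edge $id_D$; thus $s\circ j=\sigma$ and $\rho\circ s=id_D$. Taking $id_B$ along the top and $(s,\rho)$ along the bottom exhibits $j$ as a retract of $\sigma$ in the arrow category of $\ourcat$. Applying $\righta$ shows $\righta(j)$ is a retract of $\righta(\sigma)$, and since weak homotopy equivalences in $\pttop$ are closed under retracts and $\righta(\sigma)$ is one, $\righta(j)$ is a weak homotopy equivalence. The main obstacle is the middle step: one must check carefully, with due attention to the basepoint conventions on $I$ and on $\ptcof{I_+,D}$, that $\righta$ genuinely carries the pullback $N_j$ to the topological mapping path space, so that the standard facts that $ev_1$ is a Serre fibration and the constant-path inclusion is a homotopy equivalence become available. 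Everything else is formal.
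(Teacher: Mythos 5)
Your argument is correct, but it is not the route the paper takes. You run the classical retract argument through the mapping path object: form $N_j=B\times_D\ptcof{I_+,D}$ in $\ourcat$, factor $j=\rho\circ\sigma$, use the hypothesis on $j$ to lift against $\rho$ (legitimate because $\righta(\rho)$ is the endpoint evaluation of the topological mapping path space, hence a Hurewicz fibration), and conclude that $\righta(j)$ is a retract of the homotopy equivalence $\righta(\sigma)$. The paper instead shows directly that $j$ is the inclusion of a deformation retract in $\ourcat$ in the lifting-diagram sense of Definition \ref{retracts}: it lifts $j$ against $B\to 0$ to produce the retraction $r$, and against $\ptcof{I_+,D}\to\ptcof{S^0_+,D}\cong D\times D$ --- which is carried to a fibration by Lemma \ref{almostsm7} together with the second assumption --- to produce the deformation $h$; applying $\righta$ then exhibits $\righta(j)$ as the inclusion of a deformation retract in $\pttop$. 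The trade-off: the paper's version only needs the second assumption for the two path objects $\ptcof{I_+,D}$ and $\ptcof{S^0_+,D}$ and yields the slightly stronger conclusion that $j$ itself is a deformation retract inclusion in $\ourcat$, whereas your version additionally requires that $\Hom(A,?)$ carries the pullback $N_j$ homeomorphically onto the topological mapping path space of $\righta(j)$. That step is fine --- limits in $\ourcat$ are formed as in topological spaces (Lemma \ref{limits}), the compact-open topology turns $\Hom(A,?)$ into a limit-preserving functor to spaces, and naturality of the second assumption in the space variable identifies $\righta(ev_i)$ with the topological evaluations --- but it is an extra verification the paper's argument avoids, and you are right to flag it as the one point needing care.
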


\begin{proof}
        We will show that $\righta(j)$ is the inclusion of a deformation
        retract, hence a weak homotopy equivalence in $\pttop$.
        To this end, it suffices to show that $j$ is the inclusion of
        a deformation retract, since by 
        our second assumption there
        is a natural homeomorphism
        $\righta(\ptcof{I_+,D}) \cong \ptcof{I_+,\righta(D)}$ and
        similarly for $\ptcof{S^0_+,D}$.

        The inclusion $S^0_+ \to I_+$ is a cofibration between compact
        spaces, hence
\[\ptcof{I_+,\righta(D)} \to \ptcof{S^0_+,\righta(D)}
\]
        is a fibration in $\pttop$ by Lemma \ref{almostsm7}.
        Thus, the homeomorphic (again by
        our second assumption) map
\[\righta(\ptcof{I_+,D}) \to \righta(\ptcof{S^0_+,D})
\]
        is a fibration as well.

        Now $j$ is assumed to have the \LLP\ with respect to
        \starhoms $p$ for which $\righta(p)$ is a fibration.
        We have just verified that $\ptcof{I_+,D} \to \ptcof{S^0_+,D}$
        is a \starhom\ of this type.  Since $\righta$ sends $0 \in \ourcat$ to
        the basepoint and every map $X \to *$ in $\pttop$ is a fibration,
        $B \to 0$ is also a \starhom\ of
        this type.  Hence, there exist lifts in the
        two diagrams which define $j$ as the inclusion of a deformation
        retract in $\ourcat$.
\end{proof}

\begin{lemma}
        \label{cellin}
        Suppose $A,B \in \ourcat$ and A satisfies our second
        assumption, while the canonical
        morphism $i_1: B \to B \ptatimes I$ has the
        \SEP[]. Furthermore, assume $j:B \to D$ is a morphism in
        $\ourcat$ which has the \LLP\ with respect to any morphism $q$
        with $\righta(q)$ an
        acyclic fibration in $\pttop$. Then $j$ has the \SEP[].
\end{lemma}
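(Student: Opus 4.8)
The plan is to obtain the \SEP[] for $j$ by transferring it from the map $i_1 \colon B \to B \ptatimes I$, which has the \SEP[] by hypothesis, along a section of $j$ produced from the lifting assumption. To make the lifting assumption applicable I first need an explicit morphism out of $B \ptatimes I$ that $\righta$ sends to an acyclic fibration. Here Lemma \ref{nullcyl} is decisive: since $A$ satisfies our second assumption, it tells us that the identity of $\righta(B \ptatimes I)$ is null homotopic, so $\righta(B \ptatimes I)$ is contractible. As the zero algebra $0$ is terminal in $\ourcat$ and $\righta(0) = \Hom(A,0)$ is a single point, the unique morphism $q \colon B \ptatimes I \to 0$ has $\righta(q)$ equal to the map from a contractible space to a point. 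Such a map is a Serre fibration and a weak homotopy equivalence, hence an acyclic fibration in $\pttop$.

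Granting this, I would consider the square
\[
\xymatrix{
B \ar[r]^-{i_1} \ar[d]_j & B \ptatimes I \ar[d]^{q} \\
D \ar[r] \ar@{.>}[ur]^{s} & 0,
}
\]
which commutes automatically because its lower right corner is the zero object. Since $\righta(q)$ is an acyclic fibration, the assumption that $j$ has the \LLP\ with respect to every such morphism yields a lift $s \colon D \to B \ptatimes I$, and the upper triangle records exactly the relation I want, namely $s \circ j = i_1$.

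With $s$ at hand the verification is immediate. Given any continuous \cstar-seminorm $p \in S(B)$, the \SEP[] of $i_1$ provides an extension $\tilde p \in S(B \ptatimes I)$ with $i_1^* \tilde p = p$; setting $\hat p = s^* \tilde p \in S(D)$ and using functoriality of the pullback of seminorms gives
\[
j^* \hat p = (s \circ j)^* \tilde p = i_1^* \tilde p = p.
\]
Thus every $p \in S(B)$ extends across $j$, so $S(B) = j^* S(D)$ and $j$ has the \SEP[]. The entire weight of the argument sits in the first paragraph: the genuine obstacle is to recognize, via Lemma \ref{nullcyl}, that $\righta(B \ptatimes I)$ is contractible and therefore that collapsing $B \ptatimes I$ onto $0$ is precisely an acyclic fibration against which $j$ lifts. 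Once the section $s$ is produced, transferring the seminorm is a one-line composition that uses nothing beyond the \SEP[] of $i_1$.
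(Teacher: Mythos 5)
Your proposal is correct and follows the paper's own argument exactly: Lemma \ref{nullcyl} shows $\righta(B \ptatimes I \to 0)$ is an acyclic fibration, the \LLP\ produces a lift exhibiting $j$ as the first factor of $i_1$, and the seminorm transfer you write out by hand is precisely the content of the paper's Lemma \ref{firstfactor}. No substantive differences.
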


\begin{proof}
        By Lemma \ref{nullcyl}, one has that
        $\righta$ applied to the \starhom\
        $B \ptatimes I \to 0$ is an acyclic fibration in $\pttop$.
        Now, by assumption there exists a lift in the diagram
\[\xymatrix{
B \ar[r] \ar[d]^{j}& {B\ptatimes I} \ar[d]\\
D \ar[r] \ar@{.>}[ur] & 0
}\]
        which implies $j$ is the first factor in a factorization of
        $i_1:B \to B \ptatimes I$, hence it has the \SEP by Lemma
        \ref{firstfactor}.
\end{proof}

We say the functor $\righta$ creates weak equivalences if $p$ is a
weak equivalence in $\ourcat$ precisely when $\righta(p)$ is a weak
equivalence in $\pttop$ and similarly for fibrations.
We also remind the reader that all three of the assumptions here are
satisfied by the object
$A=qA' \atensorK \compacts$ for any separable $A' \in \calg$, as
discussed at the beginning of this section.

\begin{prop}
        \label{liftone}
        Suppose $A \in \ourcat$ satisfies:
\begin{enumerate}
\item   $\righta=\Hom(A,?)$ has $\lefta=A \ptatimes ?$ as a partial left adjoint
\item   there is a natural homeomorphism
  \[\righta(\ptcof{X,B}) \to \ptcof{X,\righta(B)}
  \]
\item   $A$ has the \sCSEP[].
\end{enumerate}
        Then
        there is a cofibrantly generated model structure on
        $\ourcat$ with fibrations and weak equivalences created by
        $\righta$ and $A$ itself cofibrant. 
        Furthermore, this structure
        is right proper (all objects are fibrant), and the
        generating cells are of the form $\lefta$ applied to the
        inclusions $i':S^{n-1}_+ \to D^n_+$.
\end{prop}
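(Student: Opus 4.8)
The plan is to run the standard transfer argument, lifting the Serre model structure on $\pttop$ along the partial adjunction $(\lefta,\righta)$, the one nonstandard feature being that the left adjoint is available only on $\ptcpt$. First I would declare the weak equivalences and fibrations to be exactly those $f$ with $\righta(f)$ a weak equivalence, respectively a Serre fibration, and the cofibrations to be the maps with the left lifting property against the acyclic fibrations. For generators I would take $I=\lefta(I')$ and $J=\lefta(J')$, where $I'=\{S^{n-1}_+\to D^n_+\}$ and $J'=\{D^n_+\to (D^n\times I)_+\}$ are the usual generating (acyclic) cofibrations of $\pttop$; crucially these are all maps of compact spaces, so $\lefta$ is defined on them and the natural bijection $\Hom(\lefta(X),B)\cong\ptcof{X,\righta(B)}$ of assumption (1) applies. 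Transposing each lifting square against a generator into a square in $\pttop$ then identifies the $J$-injectives with the fibrations and the $I$-injectives with the acyclic fibrations.

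The technical heart is to make the small object argument run, and here I would lean on Section \ref{studysep}. First I would check that each generating cell has the \SEP[]: the map $\lefta(i'_n)$ (and likewise $\lefta(j'_n)$) has source $A\ptatimes X$ with $X\in\ptcpt$ and has the left lifting property against the $I$-injectives, because $i'_n$, resp. $j'_n$, lifts against acyclic Serre fibrations in $\pttop$; since $A$ has the \sCSEP[] the source $A\ptatimes X$ has the \CSEP[], and Lemma \ref{cellin} then forces the \SEP[]. Pushouts of \SEP[] maps again have the \SEP[] by Lemma \ref{SEPcobase} and transfinite composites by Lemma \ref{nicecolim}, so every relative $I$- or $J$-cell complex has the \SEP[]. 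Lemma \ref{allsmall_in_ourcat} then guarantees that all objects are small relative to these cell complexes, so the small object argument yields the two functorial factorizations, into an $I$-cell map followed by an acyclic fibration and into a $J$-cell map followed by a fibration.

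To finish the axioms I would note that two-out-of-three and closure under retracts for weak equivalences and fibrations are inherited from $\pttop$ because $\righta$ creates them, while cofibrations are retract-closed by definition. The factorizations supply the factorization axioms once I observe that $I$-cell maps are cofibrations and that $J$-cell maps are acyclic cofibrations, the weak-equivalence part being precisely Lemma \ref{retractarg}, since a $J$-cell map lifts against every $p$ with $\righta(p)$ a fibration. One lifting axiom is the definition of cofibration; for the other I would use the usual retract trick: factor an acyclic cofibration as a $J$-cell map followed by a fibration, conclude from two-out-of-three that this fibration is acyclic, lift to display the acyclic cofibration as a retract of the $J$-cell map, and thereby inherit the lifting property against all fibrations.

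For the remaining assertions, the structure is cofibrantly generated essentially by construction, with generating cofibrations $\lefta(i')$ as claimed. The object $A$ is cofibrant because $0\to A$ is the single zero-cell $\lefta(S^{-1}_+\to D^0_+)$, using $A\ptatimes\mathrm{pt}=0$ and $A\ptatimes S^0=A$. Finally every $B\in\ourcat$ is fibrant, since $\righta(B)\to\righta(0)$ is a map to a point and hence a Serre fibration, and a model category in which every object is fibrant is right proper. The main obstacle, and the reason for all the preceding sections, is the \SEP[] bookkeeping of the second paragraph: arranging that the cells embed well enough, via the \sCSEP[] hypothesis and Lemma \ref{cellin}, for smallness to hold so the small object argument can proceed. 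The only other point demanding care is that, lacking a genuine adjoint, every transposition argument must be confined to compact spaces.
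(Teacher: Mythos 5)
Your proposal is correct and follows essentially the same route as the paper: transfer along the partial adjunction with generating cells $\lefta(i')$ and $\lefta(j')$, transposition of lifting squares confined to compact spaces, the retract argument for the second lifting axiom, and the \SEP[] bookkeeping (the \sCSEP[] hypothesis feeding Lemma \ref{cellin}, then Lemmas \ref{SEPcobase} and \ref{nicecolim} together with Corollary \ref{SEPcoprod}) so that Lemma \ref{allsmall_in_ourcat} makes the small object argument run. The only cosmetic difference is that you define cofibrations by the left lifting property against acyclic fibrations while the paper takes retracts of relative $I$-cell complexes, which merely shifts where one checks that relative $J$-cell complexes are cofibrations; the two definitions agree here and both arguments are standard.
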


This proof has become standard, with the exception of the fact that
$(\lefta,\righta)$ is not quite an adjoint pair.  Thus, we present the
proof in full detail.  Compare with the proof of \cite[8.7]{Michele}
and notation from \cite{Hovey}.

\begin{proof}
        The axioms (from \cite[Definition 3.3]{Quillenrational}) will
        be verified directly.
\begin{enumerate}

\item   The existence of limits and colimits is handled by
        Lemmas \ref{limits} and \ref{colimits}.

\item   The 2 of 3 property for weak equivalences follows from that
        of weak equivalences in $\pttop$.

\item   The class of cofibrations is defined as the retracts of relative
        $I$-cell complexes, hence is closed under retracts by
        definition.
        Here $I$ is the set of generating cells, $\lefta$ applied to the
        inclusions $i':S^{n-1}_+ \to D^n_+$.
        The fact that $\righta$ preserves retracts (as a functor)
        and the fact that the classes of fibrations and
        weak equivalences in $\pttop$ are closed under
        retracts implies the same for $\ourcat$.

\item   Let 
\[
\xymatrix{
{B} \ar[d]_{i} \ar[r] & {X} \ar[d]_{p} \\
{D} \ar[r] & {Y}
}
\]
        be a potential lifting square.

        If $i$ is a retract of a relative $I$-cell complex, and $p$ is
        an acyclic fibration, one must verify the existence of a lift.
        Consider first the case that $i$ is a generating cofibration.
        Then $i=\lefta(i')$ for a generating cofibration $i'$ in
        $\pttop$ (whose source and target lie in $\ptcpt$).
        Thus, $i'$ has the \LLP\ with respect to
        $\righta(p)$, which is an acyclic fibration in $\pttop$
        by assumption.  Considering the unique square
\[
\xymatrix{
{S^{n-1}_+} \ar[d]_{i'} \ar[r] & {\righta(X)} \ar[d]^{\righta(p)} \\
{D^{n}_+} \ar[r] \ar@{.>}[ur]^{f} & {\righta(Y)}
}
\]
        weakly adjoint to the original square, there must exist a lift $f$,
        which implies there was a lift in the original square via
        the weak adjoint of $f$.  In building a retract of a relative $I$-cell
        complex, one uses cobase change (pushout), coproducts,
        transfinite composition (sequential
        colimits) and retracts, all operations which preserve the
        \LLP\ with respect
        to a class of morphisms.  Hence, the arbitrary cofibration
        case follows from that of the generating cofibration.

        Now suppose that $i$ is both a retract of a relative $I$-cell
        complex and a weak equivalence, while $p$ is a fibration.
        It is shown below in (5) that $i$ can be
        factored as a relative $J$-cell complex $j:B \to F$
        followed by a fibration $q:F \to D$, where $J$ is the set of
        maps $\lefta(j')$ for $j':D^n_+ \to (D^n \times I)_+$ the inclusion
        of the base of the cylinder with a disjoint basepoint added.
        The argument in the previous paragraph then
        implies that $j$ has the \LLP\ with respect to all fibrations
        by construction, hence is a weak equivalence by Lemma \ref{retractarg}.
        This \LLP\ with respect to all fibrations also implies the
        existence of a morphism $h:F \to X$ which makes the right
        portion of
\[
\xymatrix{
{B} \ar[dr]^{j} \ar[dd]_{i} \ar[rr] & {} & {X} \ar[dd]^{p} \\
  & {F} \ar[d]^{q} \ar@{.>}[ur]^{h}   \\
{D} \ar[r]^{=} \ar@{.>}[ur]^{f} & {D} \ar[r] & {Y}
}
\]
        commute.  Also, the
        2 of 3 property established in (2) implies that $q$ is actually
        an acyclic fibration.  However, the argument in the previous
        paragraph then
        implies there is a lift of $i$ against $q$, $f:D \to F$ making
        the left portion of the previous diagram commute.  Now
        the composition $hf$ acts as a lift in the original square.

\item   Suppose $k:X \to Y$ is a morphism in $\ourcat$.  Construct a
        relative $I$-cell complex $X=X_0 \to X_\gamma$ as follows.
        (Here $\gamma$ is a cardinal such that each
        of the $\lefta(S^{n-1}_+)$ is $\gamma$-small
        with respect to morphisms in $\ourcat$ satisfying the \SEP[],
        which is possible by Lemma \ref{allsmall_in_ourcat}.)
        For limit ordinals $\delta < \gamma$, define
        $X_\delta=\underset{\sigma < \delta}{\colim} X_\sigma$.  For
        successor ordinals, define $X_{\delta+1}$ to be the pushout in
        the diagram
\[
\xymatrix{
{\coprod \lefta(S^{n-1}_+) } \ar[d] \ar[r]^{}
        & {\coprod \lefta(D^{n}_+)} \ar[d] \\
{X_\delta} \ar[r] & {X_{\delta+1}}
}
\]
        where the coproducts are indexed over the set of commutative
        squares of the form
\[
\xymatrix{
{\lefta(S^{n-1}_+)} \ar[d] \ar[r] & {\lefta(D^{n}_+)} \ar[d] \\
{X_\delta} \ar[r] & {Y}.
}
\]
        The universal properties of colimits and pushouts then yields
        a factorization of $k$ as $X \to X_\gamma \to Y$ with
        $X \to X_\gamma$ a relative $I$-cell complex.  It remains to
        verify that the morphism $p:X_\gamma \to Y$ is an
        acyclic fibration.  Since $\pttop$ is a cofibrantly generated
        model category,
        it suffices to verify that $\righta(p)$ has the RLP with respect
        to each $S^{n-1}_+ \to D^{n}_+$.  Equivalently, one can consider
        the weak adjoint of each such diagram and show that each
\[\lefta(i'):\lefta(S^{n-1}_+ \to \lefta(D^{n}_+)
\]
        has the \LLP\ with respect to $p$.
        Thus, we consider a lifting diagram
\[
\xymatrix{
{\lefta(S^{n-1}_+)} \ar[d]_{i} \ar[r]^{f} & {X_\gamma} \ar[d]_{p} \\
{\lefta(D^{n}_+)} \ar[r] & {Y}.
}
\]
        The object $\lefta(S^{n-1}_+)$ is
        $\gamma$-small with respect to morphisms in $\ourcat$ satisfying
        the \SEP by assumption, while every
        relative $I$-cell complex satisfies the \SEP by
        the combination of Lemmas \ref{embedcolim},
        \ref{SEPcobase} and \ref{cellin} with Corollary
        \ref{SEPcoprod}.  Thus,
        one may factor $f$ as
\[
\xymatrix{
{\lefta(S^{n-1}_+)} \ar[r]^{g} & X_\delta \ar[r]^{h} & X_\gamma
}
\]
        for some $X_\delta$ with $\delta < \gamma$.
        However, one then has a commutative diagram of the form
\[
\xymatrix{
{\lefta(S^{n-1}_+)} \ar[r]^{g} \ar[d]_{i} & {X_\delta} \ar[d]^{ph} \\
{\lefta(D^{n}_+)} \ar[r] & {Y}
}
\]
        which must have been used to build $X_{\delta + 1}$.  Hence,
        the composition
\[\lefta(D^{n}_+) \to X_{\delta + 1} \to X_\gamma
\]
        acts as a lift in the original square
\[
\xymatrix{
{\lefta(S^{n-1}_+)} \ar[r]^{g} \ar[d]_{i} & {X_\delta} \ar[d] \ar[r]^{h}
     & {X_\gamma} \ar[d]^{p}\\
{\lefta(S^{n-1}_+)} \ar[r] & {X_{\delta+1}} \ar[ur] \ar[r] & {Y}.
}
\]

        The other factorization of $k$ is produced similarly, using $J$ in
        place of $I$, with one additional difficulty.  Namely,
        one must verify that the relative $J$-cell complex $j$ produced
        is actually a retract of a relative $I$-cell complex as well as
        a weak equivalence.  The lifting argument from the proof of
        the first half of (4) implies that $j$ has the \LLP\ with
        respect to each fibration, hence is a weak equivalence by
        Lemma \ref{retractarg}.
        Now factor $j$ as a relative $I$-cell $i$ followed by an acyclic
        fibration, as in the previous paragraph. 
        Then the \LLP\ just mentioned for $j$
        implies the existence of a lift in the square
\[
\xymatrix{
{B} \ar[r]^{i} \ar[d]_{j} & {F} \ar[d]^{q} \\
{D} \ar[r]_{=} \ar@{.>}[ur]  & {D}
}
\]
        which exhibits $j$ as a retract of $i$, hence as a retract of
        a relative $I$-cell complex.
\end{enumerate}
\end{proof}

There is a relatively straightforward notion of intersection of cofibrantly
generated model category structures introduced in \cite{Michele}. 
This makes the following lifting lemma a corollary of the previous proposition.
We say the collection of functors $\{ \righta \}$ creates the
fibrations (resp. weak equivalences) when a \starhom\ $p$ is a
fibration if and only if each
$\righta(p)$ is a fibration (resp. weak equivalence) in $\pttop$.

\begin{cor}
        \label{liftall}
        Suppose $\{A_\alpha\}$ is a set of objects in $\ourcat$ all
        satisfying the three conditions in Proposition \ref{liftone}.
        Then there
        is a cofibrantly generated model structure on $\ourcat$ with
        fibrations and weak equivalences created by the set of
        functors $\{ \righta \}$ and with the property that each
        $A_\alpha$ is cofibrant.
\end{cor}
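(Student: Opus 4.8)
The plan is to apply Proposition \ref{liftone} to each $A_\alpha$ separately and then assemble the resulting structures by the intersection construction of \cite{Michele}. For each $\alpha$ Proposition \ref{liftone} produces a cofibrantly generated model structure $M_\alpha$ on $\ourcat$ whose fibrations $F_\alpha$ and weak equivalences $W_\alpha$ are created by $\righta[A_\alpha]$, whose generating cofibrations are $I_\alpha=\{\lefta[A_\alpha](i')\}$ for the inclusions $i'\colon S^{n-1}_+\to D^n_+$, and whose generating acyclic cofibrations are $J_\alpha=\{\lefta[A_\alpha](j')\}$ for the inclusions $j'\colon D^n_+\to (D^n\times I)_+$. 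I would then feed the data $W=\bigcap_\alpha W_\alpha$, $I=\bigcup_\alpha I_\alpha$ and $J=\bigcup_\alpha J_\alpha$ into the recognition theorem for cofibrantly generated model categories (\cite[Theorem 2.1.19]{Hovey}), in the form systematized by \cite{Michele}. Note that $\bigcap_\alpha F_\alpha$ and $\bigcap_\alpha W_\alpha$ are exactly the classes created by the collection $\{\righta[A_\alpha]\}$, as required by the statement.

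Most of the hypotheses are formal. The class $W$ inherits the two-out-of-three property and closure under retracts from the $W_\alpha$. Taking right lifting properties turns unions into intersections, so $J\text{-inj}=\bigcap_\alpha F_\alpha=F$ and $I\text{-inj}=\bigcap_\alpha(F_\alpha\cap W_\alpha)=F\cap W$; these two identities make the comparison conditions between the injective classes automatic. Smallness is supplied by Lemma \ref{allsmall_in_ourcat}: every generator in $I$ and $J$ has the \SEP (using that each $A_\alpha$ has the \sCSEP, exactly as in the proof of Proposition \ref{liftone}), and the \SEP is inherited by relative cell complexes by Lemmas \ref{embedcolim}, \ref{SEPcobase} and Corollary \ref{SEPcoprod}, so all objects are small relative to $I$- and $J$-cell complexes. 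Finally, since $F\cap W\subseteq F_\alpha\cap W_\alpha$ we have $\mathrm{cof}(M_\alpha)\subseteq I\text{-cof}$, which yields both the inclusion $J\text{-cell}\subseteq I\text{-cof}$ and, once the model structure is in hand, the cofibrancy of each $A_\alpha$ (it is cofibrant in $M_\alpha$ by Proposition \ref{liftone}).

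The single substantial point, and the step I expect to be the main obstacle, is the remaining part of the recognition theorem: every relative $J$-cell complex must be a weak equivalence in the intersection, i.e. in \emph{every} $M_\gamma$, even though its cells are contributed by various different $A_\alpha$. I would resolve this by re-running the argument of Lemma \ref{retractarg}. A relative $J$-cell complex $j\colon B\to D$ lies in $J\text{-cof}=\mathrm{LLP}(F)$, so it has the left lifting property against every map in $F=\bigcap_\gamma F_\gamma$. The crucial observation is that the only maps Lemma \ref{retractarg} tests $j$ against are $D\to 0$ and the path fibration $\ptcof{I_+,D}\to\ptcof{S^0_+,D}$, and both of these lie in $F$ simultaneously for all $\gamma$: the former because any map to $0$ is sent by each $\righta[A_\gamma]$ to a Serre fibration, and the latter because every $A_\gamma$ satisfies the second assumption of Proposition \ref{liftone}, so that $\righta[A_\gamma]$ identifies this map with $\ptcof{I_+,\righta[A_\gamma](D)}\to\ptcof{S^0_+,\righta[A_\gamma](D)}$, a fibration by Lemma \ref{almostsm7}. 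Hence the lifts of Lemma \ref{retractarg} exist and exhibit $j$ as the inclusion of a deformation retract \emph{in} $\ourcat$; applying each $\righta[A_\gamma]$ and using the second assumption again shows $\righta[A_\gamma](j)$ is the inclusion of a deformation retract in $\pttop$, hence a weak homotopy equivalence, for every $\gamma$. Thus $j\in W$, which completes the verification and yields the desired cofibrantly generated model structure with each $A_\alpha$ cofibrant.
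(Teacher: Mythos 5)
Your proposal is correct and follows essentially the same route as the paper, which likewise invokes the intersection construction of \cite[Proposition 8.7]{Michele}, gets smallness from the \SEP of cell complexes via Lemma \ref{allsmall_in_ourcat}, and handles the ``relative $J$-cell complexes are weak equivalences'' condition by Lemma \ref{retractarg}. Your observation that the two test maps ($D\to 0$ and $\ptcof{I_+,D}\to\ptcof{S^0_+,D}$) lie in the common fibration class for every $\gamma$ is exactly what the paper compresses into the phrase ``the uniformity of the cylinder chosen for each structure.''
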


\begin{proof}
        This follows from the intersection construction of
        \cite[Proposition 8.7]{Michele}.
        The relative smallness follows from the fact that cell complexes
        built from the collection of generating cells
        in each structure have the \SEP and Lemma
        \ref{allsmall_in_ourcat}. The other condition is handled by Lemma
        \ref{retractarg} and the uniformity of the cylinder chosen
        for each structure.
\end{proof}

The following technical lemmas will prove valuable in identifying weak
equivalences in specific examples considered in the next section.

\begin{lemma}
        \label{modelcyl}
        Given any $B \in \ourcat$, one has $\ptcof{I_+,B}$ acting as a path
        object in the model category sense for any model structure
        constructed with Proposition \ref{liftone} or Corollary
        \ref{liftall}.
\end{lemma}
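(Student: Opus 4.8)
The plan is to exhibit $\ptcof{I_+,B}$ as a path object, i.e.\ to factor the diagonal $\Delta\colon B\to B\times B$ as
\[
B\xrightarrow{\ s\ }\ptcof{I_+,B}\xrightarrow{(d_0,d_1)}B\times B,
\]
with $s$ a weak equivalence and $(d_0,d_1)$ a fibration. First I would fix the two structure maps. Using the identifications $B\cong\ptcof{S^0,B}$ and $B\times B\cong\ptcof{S^0_+,B}$, I take $s=\ptcof{c,B}$ for the collapse $c\colon I_+\to S^0$ of Definition \ref{retracts} (the ``constant path'' inclusion) and $(d_0,d_1)=\ptcof{e,B}$ for the endpoint inclusion $e\colon S^0_+\to I_+$ (evaluation at $0$ and $1$). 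A direct inspection of $c\circ e\colon S^0_+\to S^0$, which sends both free points to the non-basepoint, shows $(d_0,d_1)\circ s=\Delta$, so this is a genuine factorization of the diagonal.

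Next, because every model structure coming from Proposition \ref{liftone} or Corollary \ref{liftall} has its fibrations and weak equivalences created by $\righta$ (respectively by the collection $\{\righta\}$), it suffices to verify, for each relevant basic object $A$, that $\righta(s)$ is a weak homotopy equivalence and $\righta\big((d_0,d_1)\big)$ is a Serre fibration in $\pttop$. I would apply our second assumption---the natural homeomorphism $\righta(\ptcof{X,B})\cong\ptcof{X,\righta(B)}$---with the test spaces $X=S^0,S^0_+,I_+$ to obtain
\[
\righta(\ptcof{I_+,B})\cong\ptcof{I_+,\righta(B)},\qquad \righta(B\times B)\cong\ptcof{S^0_+,\righta(B)}\cong\righta(B)\times\righta(B).
\]
Naturality of that homeomorphism in the space variable then identifies $\righta(s)$ with the constant-path inclusion $\righta(B)\to\ptcof{I_+,\righta(B)}$ and $\righta((d_0,d_1))$ with the endpoint evaluation $\ptcof{I_+,\righta(B)}\to\righta(B)\times\righta(B)$.

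It then remains to invoke two standard facts about the free path space in $\pttop$. For the weak equivalence, the constant-path inclusion $Z\to\ptcof{I_+,Z}$ is a pointed homotopy equivalence (inverse: evaluation at $0$; contracting homotopy $(\gamma,t)\mapsto(u\mapsto\gamma(tu))$), so $\righta(s)$ is a weak homotopy equivalence. For the fibration, the inclusion $e\colon S^0_+\to I_+$ is a Serre cofibration between compact spaces, so Lemma \ref{almostsm7} makes $\ptcof{I_+,\righta(B)}\to\ptcof{S^0_+,\righta(B)}\cong\righta(B)\times\righta(B)$ a Serre fibration---this is exactly the observation already exploited in the proof of Lemma \ref{retractarg}. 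Hence $s$ is a weak equivalence and $(d_0,d_1)$ is a fibration, completing the verification.

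The one place calling for care---and the only real obstacle---is the bookkeeping of the second paragraph: I must check that the natural homeomorphisms of our second assumption, taken with the three different test spaces, are mutually compatible and really carry $s$ and $(d_0,d_1)$ to the standard structure maps of the free path space of $\righta(B)$. Naturality in the $\ptcpt$-variable is what makes this go through, and I expect no essentially new difficulty beyond tracking these identifications.
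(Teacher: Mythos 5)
Your proposal is correct and follows essentially the same route as the paper's own proof: both factor the fold map $S^0_+\to I_+\to S^0$ as a cofibration of compact spaces followed by a homotopy equivalence, apply $\ptcof{-,\righta(B)}$ using Lemma \ref{almostsm7} for the fibration half, and transport the conclusion to $\ourcat$ via the second assumption on $A$. The only difference is cosmetic (you dualize to the diagonal explicitly and verify the composite, where the paper leaves this implicit), so no further comment is needed.
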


\begin{proof}
        Consider the composition $i:S^0_+ \to I_+$ followed by
        $f:I_+ \to S^0$
        in $\pttop$, which gives a factorization of the fold map as
        a cofibration between compact spaces followed by a homotopy
        equivalence.
        Then precomposition with $i$,
\[i^*:\ptcof{I_+,\righta(B)} \to \ptcof{S^0_+,\righta(B)}
\]
        yields a fibration by Lemma \ref{almostsm7} and precomposition with $f$,
\[f^*:\ptcof{S^0,\righta(B)} \to \ptcof{I_+,\righta(B)}
\]
        yields a homotopy equivalence, hence a weak equivalence. 
        However, our second assumption on $A$
        then implies $\ptcof{I_+,B} \to \ptcof{S^0_+,B}$ is a fibration
        in $\ourcat$ and $\ptcof{S^0,B} \to \ptcof{I_+,B}$ is a weak
        equivalence in $\ourcat$.  Since
        $\ptcof{S^0,B} \approx B$, and
        $\ptcof{S^0_+,B} \approx B \times B$,
        this yields a factorization of the diagonal map
\[B \to \ptcof{I_+,B} \to B \times B
\]
        with the first morphism
        a weak equivalence and the second map a fibration in
        $\ourcat$.  This is
        the definition of a path object in a model category.
\end{proof}

\begin{lemma}
        \label{pizero}
        Suppose $A_\alpha \in \ourcat$ is one of the generating
        objects in a structure given by Proposition \ref{liftone} or
        Corollary \ref{liftall}.  Then
        $\pi_0 \Hom(A_\alpha,B)$,
        $[A_\alpha,B]$ and
        $\ho(A_\alpha,B)$  are all
        naturally isomorphic as sets. 
\end{lemma}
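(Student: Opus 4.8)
The plan is to pass through the three descriptions in the order $\ho(A_\alpha,B)\cong[A_\alpha,B]\cong\pi_0\Hom(A_\alpha,B)$, using the model structure for the first comparison and the partial adjunction for the second. Throughout I would keep $B$ arbitrary and check that every identification is natural in $B$, which yields the asserted natural isomorphism of sets.

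First I would compute $\ho(A_\alpha,B)$. By Proposition \ref{liftone} (or Corollary \ref{liftall}) the object $A_\alpha$ is cofibrant, and by the same result every object of $\ourcat$ is fibrant, so in particular $B$ is fibrant. For a cofibrant source and a fibrant target the morphisms in the homotopy category are represented by right homotopy classes computed with any path object, and Lemma \ref{modelcyl} exhibits $\ptcof{I_+,B}$ as such a path object, whose structure map $\ptcof{I_+,B}\to B\times B$ is given by the two endpoint evaluations (coming from the inclusion $S^0_+\to I_+$ of the endpoints). Thus $\ho(A_\alpha,B)$ is the quotient of $\Hom(A_\alpha,B)$ by the relation ``there is an $H\colon A_\alpha\to\ptcof{I_+,B}$ whose two endpoint evaluations are $f$ and $g$''. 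Using the homeomorphism $\cof[I,B]\cong\ptcof{I_+,B}$ (the general fact $\cof[X,Y]\cong\ptcof{X_+,Y}$ for compact $X$, here an isomorphism in $\ourcat$ compatible with endpoint evaluation), this relation is exactly the defining relation of $[A_\alpha,B]$, giving a natural bijection $\ho(A_\alpha,B)\cong[A_\alpha,B]$.

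Next I would compare $[A_\alpha,B]$ with $\pi_0\Hom(A_\alpha,B)$; both are quotients of the underlying set of $\Hom(A_\alpha,B)$, so it suffices to identify the two equivalence relations. The key input is condition (2) of Proposition \ref{liftone}, the natural homeomorphism $\Hom(A_\alpha,\ptcof{X,B})\cong\ptcof{X,\Hom(A_\alpha,B)}$ (which for $A_\alpha=qA'\atensorK\compacts$ is Proposition \ref{cont_aspects}, equation \eqref{pta_natural_homeo}). Taking $X=I_+$ and combining with $\cof[I,-]\cong\ptcof{I_+,-}$ yields a natural chain of bijections
\[\Hom(A_\alpha,\cof[I,B])\cong\Hom(A_\alpha,\ptcof{I_+,B})\cong\ptcof{I_+,\Hom(A_\alpha,B)}\cong\cof[I,\Hom(A_\alpha,B)],\]
so that algebraic homotopies $A_\alpha\to\cof[I,B]$ correspond bijectively to continuous paths $I\to\Hom(A_\alpha,B)$. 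The essential thing to verify is that this bijection matches endpoints: evaluation at $t\in I$ is induced by precomposition with the inclusion $\iota_t\colon S^0\to I_+$, and naturality of condition (2) in the space variable makes the square relating $\iota_t^*$ on both sides commute (using $\ptcof{S^0,-}\cong\mathrm{id}$). Hence the value of the path at $t$ equals the $t$-evaluation of the corresponding homotopy, so ``connected by an algebraic homotopy'' coincides with ``connected by a path'', giving $[A_\alpha,B]\cong\pi_0\Hom(A_\alpha,B)$.

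The main obstacle is precisely this endpoint-matching, together with the fact that $A_\alpha$ is typically not compactly generated: one cannot simply invoke the homeomorphism of Proposition \ref{cadjoint}, since that requires compact generation and in general only furnishes an inclusion $\Hom(A_\alpha,\cof[X,B])\to\cof[X,\Hom(A_\alpha,B)]$. It is exactly here that condition (2) is indispensable, as it supplies the needed homeomorphism for the specific generating objects with no compact-generation hypothesis. Everything else is formal, and tracking naturality in $B$ through the three bijections produces the natural isomorphism of sets claimed in the statement.
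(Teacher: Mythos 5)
Your proposal is correct and follows essentially the same route as the paper: the comparison $[A_\alpha,B]\cong\pi_0\Hom(A_\alpha,B)$ via the natural homeomorphism of condition (2) applied to $I_+$ and $S^0$, and the comparison $[A_\alpha,B]\cong\ho(A_\alpha,B)$ via cofibrancy of $A_\alpha$, fibrancy of $B$, and the path object $\ptcof{I_+,B}$ from Lemma \ref{modelcyl}. Your added care about endpoint-matching and the remark that Proposition \ref{cadjoint} alone would not suffice (since $A_\alpha$ need not be compactly generated) are correct elaborations of points the paper leaves implicit.
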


\begin{proof}
        In order to see that
$\pi_0 \Hom(A_\alpha,B) \approx
        [A_\alpha,B]$  it suffices
        to note that there is, by our second assumption on $A_\alpha$
        a natural homeomorphism
        $\ptcof{I_+,\Hom(A_\alpha,B)} \approx
        \Hom(A_\alpha,\ptcof{I_+,B})$
        and similarly for $S^0$ in place of $I_+$.  Thus, one has the
        same set modulo the same equivalence relation on both sides,
        since $\ptcof{I_+,B} = \cof[I,B]$.

        In order to see that $[A_\alpha,B] \approx
        \ho(A_\alpha,B)$ under these
        conditions, it suffices to note that
        $\cof[I,B] = \ptcof{I_+,B}$ acts as a path object in the
        model category sense by Lemma \ref{modelcyl}.
\end{proof}

\section{The Main Theorems}
\label{both}

We now have the technical tools to build our required model category
structure.

\begin{defn}[$\K$-model category structure]
We define the \K-theory model category structure on $\ourcat$ to be
the model category structure obtained from
Proposition \ref{liftone} using the object $q\C \atensorK \compacts$.
A weak equivalence for the $\K$-model category structure
we will refer to as a weak $\K$-equivalence.
\end{defn}

\begin{thm}
  \label{detectK}
 A morphism $f: B \to B'$ of \calgs\ is a weak \K-equivalence if and only if
it is a $K_*$-equivalence, i.e.~if the induced map $K_*(B) \to K_*(B')$ is
an isomorphism. Moreover, for \calgs\ $B$ there is a natural isomorphism
\[ Ho(\C, B) \cong K_0(B).\]
\end{thm}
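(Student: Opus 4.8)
The plan is to establish the two claims in sequence, proving the characterization of weak $\K$-equivalences first and then feeding it into the computation of $Ho(\C,B)$. Recall that, by construction of the $\K$-model structure, a \starhom\ $f$ is a weak $\K$-equivalence precisely when $\Hom(q\C \atensorK \compacts, f)$ is a weak homotopy equivalence in $\pttop$. So for the first claim, given a \starhom\ $f\colon B\to B'$ of separable \calgs, Proposition \ref{reduce_to_pi-iso} applied with $A=\C$ reduces the question to whether $\Hom(q\C, B\tensor\compacts)\to\Hom(q\C, B'\tensor\compacts)$ is a $\pi_*$-isomorphism. Everything then hinges on identifying $\pi_n\Hom(q\C, D\tensor\compacts)$ with $K_n(D)$, naturally in $D$.

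I would carry out that identification as follows. Using $\pi_n(Y)=\pi_0\ptcof{S^n,Y}$ together with the fact that $q\C$ is a \calgs[], hence compactly generated, Proposition \ref{cadjoint} supplies a homeomorphism $\ptcof{S^n,\Hom(q\C, D\tensor\compacts)}\cong\Hom(q\C,\ptcof{S^n,D\tensor\compacts})$. Lemma \ref{take_tensor_K_out_of_cof} rewrites $\ptcof{S^n,D}\tensor\compacts\ouriso\ptcof{S^n,D\tensor\compacts}$, and $\ptcof{S^n,D}$ is the $n$-fold suspension $\ptcof{S^n,D}\ouriso\czero[\R^n]\tensor D$. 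Passing to $\pi_0$ (which for a mapping space out of $q\C$ is just homotopy classes $[q\C,-]$), Cuntz's Theorem \ref{htpyKK} gives $[q\C,(\czero[\R^n]\tensor D)\tensor\compacts]\cong KK(\C,\czero[\R^n]\tensor D)$, and the standard identifications $KK(\C,-)\cong K_0(-)$ and $K_0(\czero[\R^n]\tensor D)\cong K_n(D)$ (Bott periodicity) complete the chain $\pi_n\Hom(q\C,D\tensor\compacts)\cong K_n(D)$. Naturality of each step shows the induced map is $f_*\colon K_n(B)\to K_n(B')$; hence $f$ is a weak $\K$-equivalence iff every $f_*$ is an isomorphism, which by two-periodicity is exactly the condition that $f$ be a $K_*$-equivalence.

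For the second claim I would identify the cofibrant generating object $q\C\atensorK\compacts$ as a cofibrant replacement of $\C$. It is cofibrant by Proposition \ref{liftone}, and I would connect it to $\C$ by a zigzag of weak $\K$-equivalences: Proposition \ref{verify_cofibrant_replacement} (with $A=\C$, $B=q\C$) shows that $q_{q\C}\colon q\C\atensorK\compacts\to q\C$ is a weak $\K$-equivalence, while the canonical \starhom\ $q\C\to\C$ is a $K_*$-isomorphism ($q\C$ and $\C$ being $KK$-equivalent by Cuntz's theory) and hence a weak $\K$-equivalence by the first claim, since both are \calgs. As every object is fibrant, this zigzag yields a natural isomorphism $Ho(\C,B)\cong Ho(q\C\atensorK\compacts,B)$, and Lemma \ref{pizero} identifies the right-hand side with $[q\C\atensorK\compacts,B]$. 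Finally Proposition \ref{prop_bijection_for_adjoint_of_K} gives $[q\C\atensorK\compacts,B]\cong[q\C,B\tensor\compacts]$, and Cuntz's Theorem \ref{htpyKK} together with $KK(\C,B)\cong K_0(B)$ yields the desired natural isomorphism $Ho(\C,B)\cong K_0(B)$.

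The hard part will be the cofibrant-replacement step in the second claim. Because weak $\K$-equivalences are detected by mapping \emph{out} of $q\C\atensorK\compacts$, a \starhom\ whose source is $q\C\atensorK\compacts$ cannot be analyzed through Proposition \ref{reduce_to_pi-iso}, which governs the second variable; the trick is to route the comparison through the \calgs[] $q\C$ (via Proposition \ref{verify_cofibrant_replacement}), so that the only genuinely new map, $q\C\to\C$, is a morphism of \calgs\ and thus falls under the already-established first claim. A secondary, purely bookkeeping, difficulty is tracking the separability and $\sigma$-unitality hypotheses required by Lemma \ref{take_tensor_K_out_of_cof} and by Cuntz's Theorem \ref{htpyKK} throughout the homotopy-group computation.
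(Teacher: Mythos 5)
Your route coincides with the paper's in all its main steps: the reduction via Proposition \ref{reduce_to_pi-iso}, the chain of identifications $\pi_n\Hom(q\C,B\tensor\compacts)\cong K_n(B)$ through Proposition \ref{cadjoint}, Lemma \ref{take_tensor_K_out_of_cof} and Cuntz's Theorem \ref{htpyKK}, and the cofibrant replacement $q\C\atensorK\compacts\to q\C\to\C$ for the second claim (which, incidentally, is not the hard part --- the paper dispatches it exactly as you describe). The genuine gap is that your argument only proves the theorem for separable, or at best $\sigma$-unital, \calgs, while the statement concerns arbitrary \calgs\ $B$ and $B'$: you begin with ``a \starhom\ of separable \calgs,'' and both Lemma \ref{take_tensor_K_out_of_cof} (which needs $B$ separable) and Cuntz's theorem (which needs $B$ $\sigma$-unital) are essential links in your chain of isomorphisms, as well as in the final step of your computation of $Ho(\C,B)$.

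You dismiss this as ``purely bookkeeping,'' but it is not: the hypotheses simply fail for a general \calgs[{}] and no amount of tracking restores them. The paper devotes roughly a third of its proof to the extension. One writes an arbitrary \calgs[{}] $B'$ as the directed colimit of its separable sub-\calgs\ $B$ (using that $q\C$ is separable, so every \starhom\ $q\C\to B'\tensor\compacts$ factors through some $B\tensor\compacts$), invokes continuity of $K_*$, and --- this is the delicate point --- must justify interchanging $\pi_*$ with the colimit of mapping spaces, i.e.\ that
\[
\colim_B \cof[X,\Hom(q\C,B\tensor\compacts)]\to\cof[X,\Hom(q\C,B'\tensor\compacts)]
\]
is a bijection for compact $X$. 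This interchange fails for general filtered colimits of topological spaces; the paper proves surjectivity by an explicit countability argument, showing that any $f\colon X\to\Hom(q\C,B'\tensor\compacts)$ factors through $\Hom(q\C,B_f\tensor\compacts)$, where $B_f\subset B'$ is the separable sub-\calgs[{}] generated by the countably many elements $x^{B'}_{ij}(f(x_k)(q_l))$ for dense sequences $\{x_k\}$ in $X$ and $\{q_l\}$ in $q\C$. Without some such limiting argument your proof does not cover the stated generality of either assertion of the theorem.
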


\begin{proof}
  By Proposition \ref{liftone},
  $f$ is a weak $\K$-equivalence if and only if the
  induced map
\[
  Hom(q\C \atensorK \compacts, B) \to Hom(q\C \atensorK \compacts, B')
\]
  is a weak homotopy equivalence.
 
  For any $\sigma$-unital
  \calgs[ ]$B$ we have the following natural sequence of isomorphisms
\begin{eqnarray*}
\lefteqn{
  \pi_n(Hom(q\C, B \tensor\compacts)) \cong \pi_0 Hom(q\C, \ptcof{S^n,B \tensor \compacts})
  \cong}\\
  & &\pi_0 Hom(q\C, \ptcof{S^n,B} \tensor \compacts) \cong 
  KK_0(\C,\ptcof{S^n,B}) \cong K_0(\ptcof{S^n,B})\cong K_n(B).
\end{eqnarray*}
  The first isomorphism results from Proposition \ref{cadjoint},
  the second isomorphism is induced by the isomorphism
  $\ptcof{S^n,B}\tensor\compacts \cong \ptcof{S^n, B \tensor\compacts}$
  of Lemma \ref{take_tensor_K_out_of_cof},
  the third one is Cuntz's Theorem \ref{htpyKK}, and the last two are standard.
 
  Hence using Proposition \ref{reduce_to_pi-iso} we obtain 
  that a map of $\sigma$-unital \calgs\ is a weak $\K$-equivalence if and only if
  it is a $K_*$-equivalence. Thus, we have verified the first statement in case 
  $B$ are $B'$ are $\sigma$-unital \calgs.

  For an arbitrary \calgs[] $B'$, consider the directed
  system of $\sigma$-unital sub-\calgs\ $B$ of $B'$.
  Since $q\C$ is separable the image
  of a \starhom\ into $B'\tensor \compacts$ is contained in
  $B\tensor \compacts$ for a separable (and hence $\sigma$-unital)
  sub-\calgs[ ] $B$ of $B'$, so
  \[\colim_B Hom(q\C, B \tensor \compacts) \cong Hom(q\C,B'\tensor \compacts),\]
  where the colimit is taken over the system of
  $\sigma$-unital sub-\calgs\ $B$ in $B'$.
  Continuity of the functor $K_*$
  then yields
  \[ \pi_*Hom(q\C,B'\tensor \compacts) \cong \colim_B \pi_*Hom(q\C, B\tensor \compacts)
    \cong  \colim_B K_*(B) \cong K_*(B')\]
  and the general case follows. Here we interchanged taking homotopy groups
  with taking a colimit. In general the two operations do not commute, but
  in our special case they do. The reason being that
  for a separable compact Hausdorff space $X$ the canonical map
\[
  \colim_B \cof[X, Hom(q\C, B \otimes \compacts)] \to \cof[X, Hom(q\C, B' \tensor \compacts)]
\]
  is a continuous bijection.
  Injectivity of this map is clear. To see surjectivity let
  $\underline{X}=\{x_1, x_2, ..\}$ be a countable dense set of $X$,
  $\underline{q\C}=\{q_1,q_2,...\}$ be a countable dense set of $q\C$, and let
  the functions $x^{B'}_{ij}: B' \tensor \compacts \to B'$ and
  the compact operators $e_{ij}\in \compacts$ be defined as in the proof of Lemma
  \ref{take_tensor_K_out_of_cof}. Given a map
  $f:X \to  Hom(q\C, B' \tensor \compacts)$ define
  the subset $\underline{B}_f \subset  B'$ by
\[ \underline{B}_f= \{ x^{B'}_{ij}(f(x_k)(q_l)) \ |\ i,j,k,l\in \N\}
\]
  Then $\underline{B}_f$ is countable and therefore generates
  a separable sub-\cstar-algebra
  $B_f \subset B'$, and the given map $f$ is in the image of
  $\cof[X, Hom(q\C, B_f \otimes \compacts)]$.

  To prove the second statement of the theorem, note that
 \[
  Hom(q\C \atensorK \compacts,q\C \atensorK \compacts) \to
  Hom(q\C \atensorK \compacts,q\C)
\]
  given by composing with the
  map $q_{q\C}: q\C \atensorK \compacts \to q\C$ 
  (defined in \ref{introduce_p_C})
  is a weak equivalence by Proposition \ref{verify_cofibrant_replacement}.
  Since $q\C \atensorK \compacts$ is cofibrant by Proposition
  \ref{liftone}, it then forms a cofibrant replacement of $q\C$.
  Moreover, the canonical map
  $q\C \to \C$ is a $K_*$-equivalence. So, using the first statement
  above, it follows that the composition
  $q\C \atensorK \compacts \to q\C \to \C$ is a cofibrant replacement for $\C$.
  Consequently, Lemma \ref{pizero} followed by Proposition \ref{tensorK_adjoint_is_weq} and
  Cuntz's Theorem \ref{htpyKK} imply
  \[ Ho(\C,B) \cong \pi_0(Hom(q\C \atensorK \compacts, B))
  \cong \pi_0(Hom(q\C, B \tensor \compacts))\cong K_0(B).\]
\end{proof}

\begin{remark}
The functor $B \mapsto K_0(B)$ does not behave well with respect to
inverse limits, which is the reason why we restricted our attention to
\calgs\ in the second statement of Theorem \ref{detectK}.
\end{remark}

We now change to a different model structure on $\ourcat$,
which we will refer to as the \KK-structure. 

\begin{defn}[$\KK$-model category structure]\label{KK-model cat}
We define the $\KK$-model category structure on $\ourcat$ to
be the model category structure which comes from Corollary
\ref{liftall} using the \lmccstar-algebras $qA \atensorK \compacts$
as $A$ varies over a set of representatives, one for each
isomorphism class of separable $\calgs$. 
Clearly, there is only a set
of isomorphism classes of separable $\calgs$ since the separable
condition is equivalent to having a representation in a
countable-dimensional Hilbert space.
A weak equivalence for  the $\KK$-model category structure
we call a weak $\KK$-equivalence.
\end{defn}

\begin{thm} \label{generalKK}
  A \starhom\ $f: B \to B'$ of $\sigma$-unital \calgs\ is a weak $\KK$-equivalence
if and only if it is a ``(left) $KK_*$-equivalence'', i.e.~if the induced map $KK(A,B) \to KK(A,B')$
is an isomorphism for all separable \calgs\ $A$. 
Moreover, for separable \calgs\ $A$ and
$\sigma$-unital \calgs\ $B$ there is a natural
  isomorphism
\[
  Ho(A,B) \cong KK(A,B).
\]
\end{thm}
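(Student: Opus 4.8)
The plan is to prove the two assertions separately, reducing each to facts already established for the generating objects $qA \atensorK \compacts$.

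For the first assertion, by the construction of the $\KK$-structure in Corollary \ref{liftall}, a morphism $f\colon B\to B'$ is a weak $\KK$-equivalence precisely when $\Hom(qA\atensorK\compacts, B)\to \Hom(qA\atensorK\compacts, B')$ is a weak homotopy equivalence for every separable $A\in\calg$. First I would apply Proposition \ref{reduce_to_pi-iso} to replace this by the condition that $\Hom(qA,B\tensor\compacts)\to\Hom(qA,B'\tensor\compacts)$ be a $\pi_*$-isomorphism. Then I would identify the homotopy groups by the natural chain
\[
\pi_n\Hom(qA,B\tensor\compacts)\cong \pi_0\Hom(qA,\ptcof{S^n,B\tensor\compacts})\cong\pi_0\Hom(qA,\ptcof{S^n,B}\tensor\compacts)\cong KK(A,\ptcof{S^n,B}),
\]
where the first isomorphism is Proposition \ref{cadjoint} (using that $qA\in\calg$ is compactly generated), the second is Lemma \ref{take_tensor_K_out_of_cof}, and the third is Cuntz's Theorem \ref{htpyKK} (valid since $\ptcof{S^n,B}$ is again $\sigma$-unital). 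Since $KK(A,\ptcof{S^n,B})\cong KK(\ptcof{S^n,A},B)$ and suspensions of separable algebras are separable, the $\pi_*$-isomorphism condition for all separable $A$ is equivalent to $KK(A,f)$ being an isomorphism for all separable $A$, which is exactly the definition of a (left) $KK_*$-equivalence. The naturality of each cited isomorphism in $B$ is what guarantees that the induced maps correspond to $KK(A,f)$, and I would check this compatibility carefully.

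For the second assertion, the strategy is to exhibit $qA\atensorK\compacts$ as a cofibrant replacement of $A$. It is cofibrant by Corollary \ref{liftall}, being one of the generating objects, so it remains to produce a weak $\KK$-equivalence to $A$. I would factor the natural map as
\[
qA\atensorK\compacts \xrightarrow{\;q_{qA}\;} qA \longrightarrow A.
\]
The first map is a weak $\KK$-equivalence: for every separable $A'$, Proposition \ref{verify_cofibrant_replacement} (with its ``$B$'' taken to be $qA$) shows $\Hom(qA'\atensorK\compacts, q_{qA})$ is a weak homotopy equivalence, and these functors create the weak equivalences. The second map is a morphism of separable (hence $\sigma$-unital) \calgs, so by the first assertion it is a weak $\KK$-equivalence as soon as it is a $KK$-equivalence; the latter is the standard fact that the canonical map $qA\to A$ induces isomorphisms $KK(A',qA)\cong KK(A',A)$. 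Composing, $qA\atensorK\compacts\to A$ is a weak $\KK$-equivalence.

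Finally, since every object is fibrant and $qA\atensorK\compacts$ is a cofibrant replacement of $A$, I would compute
\[
Ho(A,B)\cong [qA\atensorK\compacts, B]\cong \pi_0\Hom(qA\atensorK\compacts,B)\cong \pi_0\Hom(qA,B\tensor\compacts)\cong KK(A,B),
\]
using Lemma \ref{pizero} for the second isomorphism, Proposition \ref{tensorK_adjoint_is_weq} (equivalently the homotopy version \eqref{bijection_for_adjoint_of_K(homotopyversion)}) for the third, and Cuntz's Theorem \ref{htpyKK} for the last. I expect the main obstacle to be the verification that $qA\atensorK\compacts\to A$ is a weak $\KK$-equivalence against the \emph{entire} generating family $\{\Hom(qA'\atensorK\compacts,?)\}_{A'}$, rather than merely the functor indexed by $A$ itself: this is precisely the step where the internal input (Proposition \ref{verify_cofibrant_replacement}, applied uniformly in $A'$) must be combined with the external $KK$-theoretic fact that the canonical map $qA\to A$ is a $KK$-equivalence.
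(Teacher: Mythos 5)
Your proposal is correct and follows essentially the same route as the paper's proof: reduce the weak-equivalence condition via Proposition \ref{reduce_to_pi-iso} and identify $\pi_*\Hom(qA,B\tensor\compacts)$ with $KK_*(A,B)$ via Cuntz's theorem for the first assertion, and exhibit $qA\atensorK\compacts\to qA\to A$ as a cofibrant replacement (Proposition \ref{verify_cofibrant_replacement} plus the standard fact that $qA\to A$ is a $KK$-equivalence) before computing $Ho(A,B)$ via Lemma \ref{pizero} and Proposition \ref{tensorK_adjoint_is_weq}. The only difference is that you spell out the suspension bookkeeping ($KK(A,\ptcof{S^n,B})\cong KK(\ptcof{S^n,A},B)$) that the paper leaves implicit, which is a welcome clarification rather than a deviation.
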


\begin{proof}
  By definition
  a \starhom\ $f: B \to B'$ is a weak $\KK$-equivalence if and only if the induced maps
\[
  Hom(qA \atensorK \compacts, B) \to Hom(qA \atensorK \compacts, B')
\]
  are weak equivalences for all separable \calgs\ $A$.
By \ref{reduce_to_pi-iso} this is the case if and only if 
the maps $\pi_n Hom(qA ,B \tensor \compacts) \to \pi_n Hom(qA , B'\tensor \compacts )$
are isomorphisms for all separable $A\in\calg$ and all $n\in\N$.
However, by Cuntz's Theorem \ref{htpyKK} we have the 
following isomorphism
\[
  \pi_*Hom(qA, B \tensor \compacts) \cong  KK_*(A,B),
\]
natural for $\sigma$-unital \calgs\ $B$.  
Thus, $f$ is a weak $\KK$-equivalence if and only if it is a (left)
$KK_*$-equivalence.

To see the second statement, recall from Proposition \ref{verify_cofibrant_replacement}
that composition with $q_{qB}: qB \atensorK \compacts \to qB$
(defined in \ref{introduce_p_C}) induces weak equivalences
\[
  Hom(qA \atensorK \compacts, qB \atensorK \compacts) \to Hom(qA \atensorK \compacts, qB)
\]
for all separable \calgs\ $A$. Moreover, it is well-known that
$qA \to A$ is a (left) $KK_*$-equivalence for all separable \calgs\ $A$ (cf.~\cite[19.1.2.(i)]{Blackadar(1998)}).
Thus, the composition $qA \atensorK \compacts \to qA \to A$ is a cofibrant replacement
for $A$ if $A$ is a separable \calgs[.] Consequently, as in the proof of Theorem \ref{detectK} we get
\[ Ho(A,B) \cong \pi_0 Hom(qA \atensorK \compacts, B) \cong \pi_0 Hom(qA, B \tensor \compacts) \cong KK(A,B).\]
\end{proof}

\begin{remark}\label{extendingKK}
When defining $KK(A,B)$ one typically assumes $A$ to be a separable \calgs[] and
$B$ to be a $\sigma$-unital \calgs[]. $KK$-groups also have been defined in more general
settings by various authors. A new approach for defining it for algebras $A,B\in\ourcat$ 
is given by simply defining it by $\KK(A,B)=Ho(A,B)$.
\end{remark}

    %
      \bibliographystyle{amsalpha}
      \bibliography{KKcites}

\providecommand{\bysame}{\leavevmode\hbox to3em{\hrulefill}\thinspace}
\providecommand{\MR}{\relax\ifhmode\unskip\space\fi MR }
\providecommand{\MRhref}[2]{%
  \href{http://www.ams.org/mathscinet-getitem?mr=#1}{#2}
}
\providecommand{\href}[2]{#2}
\begin{thebibliography}{tDKP70}

\bibitem[Are52]{Arens(1952)}
Richard Arens, \emph{A generalization of normed rings}, Pacific J. Math.
  \textbf{2} (1952), 455--471.

\bibitem[Are58]{Arens(1958)}
\bysame, \emph{Dense inverse limit rings}, Michigan Math. J \textbf{5} (1958),
  169--182.

\bibitem[Bla98]{Blackadar(1998)}
Bruce Blackadar, \emph{{$K$}-theory for operator algebras}, Mathematical
  Sciences Research Institute Publications, vol.~5, Cambridge University Press,
  Cambridge, 1998.

\bibitem[Bor94]{Borceux(1994a)}
Francis Borceux, \emph{Handbook of categorical algebra. 1}, Encyclopedia of
  Mathematics and its Applications, vol.~50, Cambridge University Press,
  Cambridge, 1994.

\bibitem[Cun87]{Cuntz(1987)}
Joachim Cuntz, \emph{A new look at {$KK$}-theory}, $K$-Theory \textbf{1}
  (1987), no.~1, 31--51.

\bibitem[Dix77]{Dixmier(1977)}
Jacques Dixmier, \emph{{$C\sp*$}-algebras}, North-Holland Publishing Co.,
  Amsterdam, 1977, Translated from the French by Francis Jellett, North-Holland
  Mathematical Library, Vol. 15.

\bibitem[Hir03]{Hirschhorn(2003)}
Philip~S. Hirschhorn, \emph{Model categories and their localizations},
  Mathematical Surveys and Monographs, vol.~99, American Mathematical Society,
  Providence, RI, 2003.

\bibitem[Hov99]{Hovey}
Mark Hovey, \emph{Model categories}, Mathematical Surveys and Monographs,
  vol.~63, American Mathematical Society, Providence, RI, 1999.

\bibitem[IJ02]{Michele}
Michele Intermont and Mark~W. Johnson, \emph{Model structures on the category
  of ex-spaces}, Topology Appl. \textbf{119} (2002), no.~3, 325--353.

\bibitem[KN63]{Kelley-Namioka(1963)}
J.~L. Kelley and Isaac Namioka, \emph{Linear topological spaces}, With the
  collaboration of W. F. Donoghue, Jr., Kenneth R. Lucas, B. J. Pettis, Ebbe
  Thue Poulsen, G. Baley Price, Wendy Robertson, W. R. Scott, Kennan T. Smith.
  The University Series in Higher Mathematics, D. Van Nostrand Co., Inc.,
  Princeton, N.J., 1963.

\bibitem[Mal86]{Mallios(1986)}
Anastasios Mallios, \emph{Topological algebras. {S}elected topics},
  North-Holland Mathematics Studies, vol. 124, North-Holland Publishing Co.,
  Amsterdam, 1986, Notas de Matem\'atica [Mathematical Notes], 109.

\bibitem[Mey00]{Meyer(2000)}
Ralf Meyer, \emph{Equivariant {K}asparov theory and generalized homomorphisms},
  $K$-Theory \textbf{21} (2000), no.~3, 201--228, archiv:math.KT/0001094.

\bibitem[Mic52]{Michael(1952)}
Ernest~A. Michael, \emph{Locally multiplicatively-convex topological algebras},
  Mem. Amer. Math. Soc., \textbf{1952} (1952), no.~11, 79.

\bibitem[ML98]{MacLane}
Saunders Mac~Lane, \emph{Categories for the working mathematician}, second ed.,
  Graduate Texts in Mathematics, vol.~5, Springer-Verlag, New York, 1998.

\bibitem[Phi88a]{Phillips(1988a)}
N.~Christopher Phillips, \emph{Inverse limits of {$C\sp *$}-algebras}, J.
  Operator Theory \textbf{19} (1988), no.~1, 159--195.

\bibitem[Phi88b]{Phillips(1988b)}
\bysame, \emph{Inverse limits of {$C\sp *$}-algebras and applications},
  Operator algebras and applications, Vol.\ 1, London Math. Soc. Lecture Note
  Ser., vol. 135, Cambridge Univ. Press, Cambridge, 1988, pp.~127--185.

\bibitem[Phi89]{Phillips(1989)}
\bysame, \emph{Classifying algebras for the {$K$}-theory of {$\sigma$}-{$C\sp
  *$}-algebras}, Canad. J. Math. \textbf{41} (1989), no.~6, 1021--1089.

\bibitem[Qui69]{Quillenrational}
Daniel Quillen, \emph{Rational homotopy theory}, Ann. of Math. (2) \textbf{90}
  (1969), 205--295.

\bibitem[Ste67]{Steenrod(1967)}
N.~E. Steenrod, \emph{A convenient category of topological spaces}, Michigan
  Math. J. \textbf{14} (1967), 133--152.

\bibitem[tDKP70]{tomDieck-Kamps-Puppe(1970)}
Tammo tom Dieck, Klaus~Heiner Kamps, and Dieter Puppe, \emph{Homotopietheorie},
  Springer-Verlag, Berlin, 1970, Lecture Notes in Mathematics, Vol. 157.

\end{thebibliography}

\end{document}